\theoremstyle{plain}
\newtheorem{thm}{Theorem}
\newtheorem{prop}{Proposition}
\newtheorem{lem}{Lemma}
\newtheorem{cor}{Corollary}
\newtheorem{assumption}{Assumption}
\theoremstyle{definition}
\newtheorem{definition}{Definition}
\theoremstyle{remark}
\newtheorem{remark}{Remark}
\newcommand{\be}{\begin{enumerate}}
\newcommand{\ee}{\end{enumerate}}
\newcommand{\ip}[2]{\left \langle #1 , #2 \right \rangle}
\begin{document}
%
\title{Adaptive Algorithms for Coverage Control and Space Partitioning in Mobile Robotic Networks}
%
%
%

\author{Jerome~Le~Ny,~\IEEEmembership{Member,~IEEE,}
        and~George~J.~Pappas,~\IEEEmembership{Fellow,~IEEE}
\thanks{Long version of a manuscript to appear in the Transactions on Automatic Control.
Submitted November 4, 2010; revised November 30, 2011, and July 2012.
This work was supported by the ONR-MURI HUNT award N00014-08-1-0696.
}%
\thanks{J. {Le Ny} is with the Department of Electrical Engineering, \'Ecole Polytechnique
de Montr\'eal, QC H3T 1J4, Canada (email: {\tt\small jerome.le-ny@polymtl.ca}). 
G. Pappas is with the Department of Electrical and Systems Engineering, 
University of Pennsylvania, Philadelphia, PA 19104, USA 
(email: {\tt\small pappasg@seas.upenn.edu}).}
}

%
%

\markboth{University of Pennsylvania, ESE Technical Report}%
{Le Ny and Pappas: Adaptive Deployment}
%



\maketitle

\begin{abstract}
This paper considers deployment problems where a mobile robotic network must optimize its configuration in a distributed way 
in order to minimize a steady-state cost function that depends on the spatial distribution of certain probabilistic events of interest. 
Moreover, it is assumed that the event location distribution is a priori unknown, and can only be progressively inferred 
from the observation of the actual event occurrences. 
Three classes of problems are discussed in detail: coverage control problems, spatial partitioning problems, 
and dynamic vehicle routing problems. 
In each case, distributed stochastic gradient algorithms optimizing the performance objective are presented.
The stochastic gradient view simplifies and generalizes previously proposed solutions, and is applicable to new complex scenarios, 
such as adaptive coverage involving heterogeneous agents. 
Remarkably, these algorithms often take the form of simple distributed rules that could be implemented on resource-limited platforms.
\end{abstract}

\begin{IEEEkeywords}
Coverage control problems, partitioning algorithms, dynamic vehicle routing problems, 
stochastic gradient descent algorithms, adaptive algorithms, potential field based motion planning.
\end{IEEEkeywords}

%
\IEEEpeerreviewmaketitle

\section{Introduction}

The deployment of large-scale mobile robotic networks has been an actively investigated topic 
in recent years \cite{Howard02_deployment, Ogren04_multirobot, Bullo09_book}. 
Applications range from intelligence, surveillance and reconnaissance missions with unmanned aerial vehicles 
to environmental monitoring, search and rescue missions, and transportation and distribution tasks. 
With the increase in size of these networks, relying on human operators to remotely pilot each vehicle is becoming impractical. 
Attention is increasingly focusing on enabling autonomous operations, 
so that these systems can decide online how to concentrate their activities where they are most critical.

A mobile robotic network should have the capability of autonomously deploying itself in a region of interest 
to reach a configuration optimizing a given performance objective \cite[Chapter 5]{Bullo09_book}. 
Such problems can be distinguished based on the deployment objective, and among them 
the coverage control problem introduced by Cort{\'e}s et al. \cite{cortesTRO04_coverage} has proved to be particularly important. 
In this problem, the quality of a given robot configuration is measured by a multicenter function from the locational optimization 
and vector quantization literature \cite{Gray84_vectorQuantization, Drezner95_facilityLocation}. 
A distributed version of the Lloyd quantization algorithm \cite{Lloyd82_Quantization} allows a robotic network to locally optimize 
the utility function in a way that scales gracefully with the size of the network \cite{cortesTRO04_coverage}. 
The basic version of the coverage control problem has inspired many variations, 
e.g., considering limited communication and sensing radii \cite{Cortes05_coverageComm, Li05_coverageControl}, 
heterogeneous sensors \cite{Guruprasad09_heterogeneousCoverage}, 
obstacles and non-point robots \cite{Pimenta08_NonConvexCoverage}, 
or applications to field estimation problems \cite{Schwager09_adaptiveCoverage}.
It is also related to certain vehicle routing problems, 
notably the Dynamic Traveling Repairman Problem (DTRP) \cite{Bertsimas91_DTRP, Bertsimas93_multiDTRP, Bertsimas93_DTRP}, 
as discussed by Frazzoli and Bullo in \cite{Frazzoli04_DTRP} 
and several subsequent papers \cite{Savla07_thesis, Pavone10_adaptiveDTRP}. 
Another related problem is the space partitioning problem \cite{Pavone09_equipartitions, Cortes10_loadBalancing}, 
where the robots must autonomously divide the environment in order to balance the workload among themselves.

In essentially all the previously mentioned applications, the goal of the robotic network 
is to respond to events appearing in the environment. 
For example in the DTRP, jobs appear over time at random spatial locations 
and are serviced by the mobile robots traveling to these locations. 
The utility function optimized by the network invariably depends on the spatial distribution of the events, 
and the optimization algorithms require the knowledge of 
this distribution \cite{cortesTRO04_coverage, Frazzoli04_DTRP, Pavone09_equipartitions, Cortes10_loadBalancing}. 
Hence they are not applicable in the commonly encountered situations where the robots do not initially have such knowledge 
but can only observe the event locations over time. It is then natural to ask how to gradually improve the spatial configuration 
of the robotic network based only on these observations.
Indeed, recently some coverage control algorithms \cite{Choi09_coverage, Schwager09_adaptiveCoverage} and 
vehicle routing algorithms \cite{Arsie09_routing, Pavone10_adaptiveDTRP} have been developed to 
work in the absence of a priori knowledge of the event location distribution. 

An essential idea of our work is that deployment problems with stochastic uncertainty can be discussed 
from the unifying point of view of stochastic gradient algorithms, thereby clarifying the convergence proofs 
and allowing to easily derive new algorithms for complex problems. 
In this paper we restrict our attention to three related classes of problems: coverage control, spatial partitioning, 
and dynamic vehicle routing problems. For these three applications, we derive distributed stochastic gradient algorithms 
that optimize the utility functions \emph{in the absence of a priori knowledge of the event location distribution}.
We call these algorithms \emph{adaptive}, in analogy with the engineering literature on adaptive systems \cite{Benveniste90_SA}.
Remarkably, the algorithms we describe often take the form of simple rules, in fact typically simpler than the corresponding non-adaptive algorithms. 
Hence they are easier to implement on small platforms with limited sensing, computational and communication capabilities.

Specifically, we first discuss in Section \ref{section: coverage problems} certain 
stochastic gradient algorithms that adaptively optimize coverage control objectives. 
These algorithms generalize to new complex multi-agent deployment problems and 
we justify this claim by developing solutions to coverage control problems 
involving Markovian event dynamics or heterogeneous robots. 
Additional application examples, including deployment under 
realistic stochastic wireless connectivity constraints, 
can be found in \cite{LeNy10_adaptiveRobotsTR, LeNy12_adaptiveComm}. 
In Section \ref{section: load-balancing}, we describe new adaptive distributed algorithms 
that partition the workspace between the robots in order to balance their workload, 
using only the observation of the past event locations. 
These algorithms exploit the link between generalized Voronoi diagrams and 
certain Monge-Kantorovich optimal transportation problems 
\cite{Rachev98_optimalTransport, Gangbo96_optimalTransport, Ruschendorf97_coupling}. 
Finally in Section \ref{section: vehicle routing problems} we present the first 
fully adaptive algorithm for the DTRP. 
In light traffic conditions, the policy reduces to the 
coverage control algorithm of Section \ref{section: coverage problems}, 
and is simpler than the previous algorithm presented in \cite{Arsie09_routing}. 
In heavy traffic conditions, it relies on the partitioning algorithm 
of Section \ref{section: load-balancing}. 
This algorithm complements the recent work of Pavone et al. \cite{Pavone10_adaptiveDTRP}, 
in which the knowledge of the event location distribution is required in the heavy traffic regime.


\section{Preliminaries}	\label{section: preliminaries}

\subsection{Notation}	\label{section: notations}

We denote $[n] := \{1,\ldots,n\}$.  
Throughout the paper all random elements are defined on a generic probability space $(\Omega, \mathcal F, P)$, 
with the expectation operator corresponding to $P$ denoted $E$. 
We abbreviate ``independent and identically distributed'' by iid, and ``almost surely'' by a.s.
We denote the \emph{Euclidean norm} on $\mathbb R^q$ by $\| \cdot \|$.

Let $(X,d)$ be a metric space.
For a set $S \subset X$, we denote the indicator function of $S$ by  $\mathbf{1}_{S}$, 
i.e., $\mathbf 1_S(x) = 1$ if $x \in S$ and $\mathbf 1_S(x) = 0$ otherwise.
For $x_0 \in X$, the Dirac measure at $x_0$ is denoted by $\delta_{x_0}$ and 
defined by $\delta_{x_0}(S) = \mathbf 1_S(x_0)$ for all Borel subsets $S$ of $X$. 
We denote the distance from a point $x \in X$ to a set $S$ by
$
d(x,S) := \inf_{y \in S} d(x,y),
$
and we set $d(x,\emptyset) = + \infty$. 
A sequence of points $\{x_k\}_{k \geq 0}$ in $X$ is said to converge to 
a set $S \subset X$ if $d(x_k,S) \to 0$ as $k \to \infty$.
For nonempty sets $B,C \subset X$, the Hausdorff pseudometric is 
defined by $d_H(B,C) := \max(\sup_{x \in B} d(x,C), \sup_{x \in C} d(x,B))$.
The ball of radius $r$ around $S \subset X$ is
$
B(S,r) := \{x \in X | d(x,S) \leq r\}.
$
Also, $B(\{x\},r)$ is just denoted $B(x,r)$. 

A solution of a differential equation $\dot x = h(x)$ or of a differential inclusion $\dot x \in \mathcal H(x)$ \cite{Cortes08_discontinuous} is
interpreted in the sense of Caratheodory, i.e., as an absolutely continuous function $x(t)$ satisfying 
\begin{align*}
x(t) = x_0 + \int_0^t y(s) \; ds, \text{ for all } t \in \mathbb R, \text{with } y(s)=h(x(s)) \\
\text{ or } y(s) \in \mathcal H(x(s)) \text{ for all } s.
\end{align*}
Finally, a set I is invariant under 
a differential inclusion $\dot x \in \mathcal H(x)$ if for all $x_0 \in I$, there exist \emph{some} solution
$x(t), t \in (-\infty,\infty)$, with $x(0) = x_0$, that lies entirely in $I$.

\subsection{Robot Network Model}		\label{section: model}

We consider a group of $n$ robots evolving in $\mathbb R^q$, 
for some $q \geq 1$. 
We denote the robot positions at time $t \in \mathbb R_{\geq 0}$ by $p(t) = [p_1(t),\ldots,p_n(t)] \in (\mathbb R^q)^n$. 
For simplicity, we assume that the robots follow a simple kinematic model
\begin{equation}	\label{eq: trivial dynamics continuous time}
\forall i \in [n], \forall t \in \mathbb R_{\geq 0}, \; \dot p_i(t) = u_i,\; |u_i(t)| \leq v_i,
\end{equation}
where $v_i$ is a positive constant and $u_i$ is a bounded control input.
However, more complex dynamics could be considered since our analysis only involves the positions of the robots at certain discrete times, 
see, e.g., (\ref{eq: trivial dynamics discrete time}).  In addition, the robots are assumed to perform computations and to communicate instantaneously. 
Finally, we define
\begin{align}	\label{eq: diagonal set definition}
\mathsf D_n = \Big \{p= [p_1,\ldots,p_n] \in & (\mathbb R^{q})^n \; \Big | \\
& p_i = p_j \text{ for some } 1 \leq i < j \leq n \Big \}. \nonumber
\end{align}
Hence $\mathsf D_n$ is the set of configurations where at least two robots occupy the same position.

\subsection{Geometric Optimization}

For a vector $p = [p_1,\ldots,p_n] \in (\mathbb R^{q})^n\setminus \mathsf D_n$, we define the Voronoi cell of the point $p_i$ by
\[
V_{i}(p) = \Big \{z \in \mathbb R^q \Big | \|z-p_{i}\| \leq \|z-p_{j}\|, \forall j \in [n] \Big \}.	
\]
That is, $V_{i}$ is the set of points for which robot $i$ is the closest robot \emph{for the Euclidean distance}. 
The Voronoi cells of the points divide $\mathbb R^q$ into closed convex polyhedra, and $\{V_i\}_{i \in [n]}$ 
is called a Voronoi diagram \cite{Okabe2000_Voronoi}. 
Two points $p_i$ and $p_j$ or their indices $i, j$ (with $i \neq j$) are called Voronoi neighbors 
if the boundaries of their Voronoi cells intersect, i.e., if $V_i(p) \cap V_j(p) \neq \emptyset$.

For a function $c: \mathbb R^q \times \mathbb R^q \to \mathbb R$, 
a vector $w = [w_1,\ldots,w_n] \in \mathbb R^n$, 
and $p = [p_1,\ldots,p_n] \in (\mathbb R^{q})^n\setminus \mathsf D_n$,
define for all $i \in [n]$ the generalized Voronoi cell of the pair $(p_i,w_i)$ with respect to $c$ by
\begin{align}	\label{eq: generalized Voronoi}
V^c_{i}(p,w) = \Big \{z \in \mathbb R^q \Big | c(z,p_{i}) - w_i \leq c(z,p_{j}) - w_j, \\ 
\forall j \in [n] \Big \}.	\nonumber
\end{align}
We also write $V_i^c(\mathcal G,w):=V^c_{i}(p,w)$ for the set $\mathcal G = \{p_1,\ldots,p_n\}$.
The point $p_i$ is called the generator and $w_i$ the weight of the cell $V^c_i(p,w)$, and $\{V^c_i\}_{i \in [n]}$ a generalized Voronoi diagram.
Intuitively $c(z,p)$ represents a distance or cost between the points $z$ and $p$, and in practice takes the form
$c(z,p) = f(\|z-p\|)$, with $f$ an increasing function.
In particular for $f(x) = x^2$, the generalized Voronoi diagram is called 
a power diagram \cite{Aurenhammer91_VoronoiSurvey, Okabe2000_Voronoi}, 
and the generalized Voronoi cell a power cell. Like Voronoi cells, power cells are polyhedra, 
although possibly empty \cite{Aurenhammer91_VoronoiSurvey}.
Notice from (\ref{eq: generalized Voronoi}) that for a given configuration $p$, the size of a generalized Voronoi cell 
of a pair increases as its weight increases with respect to the weights of the other pairs.
Similarly to Voronoi neighbors, we define generalized Voronoi neighbors and power diagram neighbors.

\subsection{Min-consensus}	\label{section: min-consensus}

At several occasions, we need to solve the following problem in a distributed manner in the robotic network. 
Robot $i$, for $i \in [n]$, is associated to a certain quantity $\hat d_i \in \mathbb R$, 
which can be $+\infty$. Each robot must decide if it belongs to the set $\arg \min_{i \in [n]} \hat d_i$.
For simplicity, we assume that each robot can communicate with some other robots along bidirectional links 
in such a way that the global communication network is connected. 
We also assume that the robots know the diameter of the network, denoted \texttt{diam}. 
Alternatively, they know the number $n$ of robots in the system, in which case we take $\texttt{diam} = n$ below.

In a synchronous network the problem can be solved by the \texttt{FloodMin} algorithm \cite[section 4.1.2]{Lynch1996distributed}. 
Each robot maintains a record in a variable $\mathtt{d_i}$ of the minimum number it has seen so far, 
with $\mathtt{d_i} = \hat d_i$ initially. At each round, it sends this minimum to all its neighbors. 
The algorithm terminates after \texttt{diam} rounds. The agents that still have $\mathtt{d_i} = \hat d_i$ at the end 
know that they belong to $\arg \min_{i \in [n]} \hat d_i$. This algorithm can also be implemented 
in an asynchronous network by adding round numbers to the transmitted messages \cite[section 15.2]{Lynch1996distributed}.


\section{Adaptive Coverage Control Algorithms}	\label{section: coverage problems}

\subsection{Coverage Control for Mobile Robotic Networks}	\label{section: coverage control review}

In the standard coverage control problem \cite{cortesTRO04_coverage}, the goal of the robotic network 
is to reach asymptotically a configuration where the agent positions 
$\lim_{t \to \infty} p_i(t), i \in [n],$ 
minimize the following performance measure capturing the quality of coverage of certain events:
\begin{align}
\mathcal E_n(p) &= E \left[\min_{i \in [n]}  f(\|p_i - Z\|) \right], 	\label{eq: coverage simple general}
\end{align}
where $f: \mathbb R_{\geq 0} \to \mathbb R_{\geq 0}$ is an increasing continuously differentiable function. 
The random variable $Z$ represents the location of an event of interest occurring in the workspace. 
To interpret (\ref{eq: coverage simple general}), the cost of servicing an event at location $z$ 
with a robot at location $p_i$ is measured by $f(\|p_i-z\|)$, and 
an event must be serviced by the robot closest to the location of this event.  
For example, in monitoring applications, 
$f(\|p_i-z\|)$ can measure the degradation of the sensing performance with the distance 
to the event \cite{cortesTRO04_coverage}. In vehicle routing problems, this cost might be the 
time it takes a robot to travel to the event location, i.e., $f(\|p_i-z\|) = \|p_i-z\|/v_i$, assuming enough time between
successive events, see Section \ref{section: vehicle routing problems}.

For simplicity, we assume in this section  
that the probability distribution $\mathbb P_z := P \circ Z^{-1}$ of $Z$ 
has support contained in a \emph{compact convex} set $\mathsf Q$ with nonempty interior.
We also generally make the following assumption.
\begin{assumption}	\label{assumption: hyperplanes}
Hyperplanes in $\mathbb R^q$ have $\mathbb P_z$-measure zero.
\end{assumption}
Note that Assumption \ref{assumption: hyperplanes} implies that points also have measure zero, 
and in particular the support of $\mathbb P_z$ is infinite.
The following result, whose proof can be found in Appendix \ref{appendix: Lloyd gradient flow}, 
provides an expression for the derivatives of $\mathcal E_n$, useful for optimization purposes.
Throughout the paper $\partial \mathcal E_n / \partial p_i$ for $p_i \in \mathbb R^q$ denotes the $q$-dimensional 
vector of partial derivatives with respect to the components of $p_i$. 
We also adopt the convention $0/ \|0\| := 0$.

\begin{prop}	\label{prop: derivative of distortion}
Under Assumption \ref{assumption: hyperplanes},
$\mathcal E_n$ is Lipschitz continuous on compact sets  
and continuously differentiable on $\left( \mathbb R^q \right)^n \setminus \mathsf D_n$, 
with partial derivatives
\begin{equation}	\label{eq: partial derivative distortion}
\frac{\partial \mathcal E_n}{\partial p_i} \Big |_p = \int_{V_i(p)}  f'(\|p_i-z\|) \frac{p_i - z}{\|p_i - z\|} \mathbb P_z(dz).
\end{equation}
\end{prop}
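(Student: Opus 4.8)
The plan is to prove the two regularity claims and then compute the gradient. First I would establish differentiability on $(\mathbb R^q)^n \setminus \mathsf D_n$, which is where the main work lies. Fix a point $p$ with distinct coordinates. The integrand $g(p,z) := \min_{i} f(\|p_i - z\|)$ is continuous in $(p,z)$ and, for each fixed $z$, is differentiable in $p$ except on the set of $z$ for which the minimizing index is not unique. That exceptional set is contained in a finite union of the bisectors $\{z : f(\|p_i - z\|) = f(\|p_j - z\|)\}$; since $f$ is increasing these are exactly the perpendicular bisector hyperplanes $\{z : \|z - p_i\| = \|z - p_j\|\}$, which have $\mathbb P_z$-measure zero by Assumption~\ref{assumption: hyperplanes}. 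Hence for $\mathbb P_z$-almost every $z$ the map $p \mapsto g(p,z)$ is differentiable at $p$ with $\partial g/\partial p_i = f'(\|p_i - z\|)(p_i - z)/\|p_i - z\|$ when $z$ lies in the interior of $V_i(p)$ and $0$ otherwise (using the $0/\|0\|$ convention and continuous differentiability of $f$; note $z = p_i$ is also a null event).

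Next I would justify differentiating under the integral sign. On a neighborhood of $p$ that stays away from $\mathsf D_n$ and is contained in a compact set, the partial derivatives $\|\partial g / \partial p_i\| \leq \sup_{z \in \mathsf Q} f'(\|p_i - z\|) =: L$ are uniformly bounded (here compactness of $\mathsf Q$ and continuity of $f'$ are used), so the dominated convergence theorem applies to difference quotients: for each coordinate direction, $(g(p + he_k, z) - g(p,z))/h$ is bounded by $L$ uniformly in small $h$ and converges a.e., giving $\partial \mathcal E_n/\partial p_i|_p = \int g_{p_i}(p,z)\, \mathbb P_z(dz) = \int_{V_i(p)} f'(\|p_i - z\|)\frac{p_i - z}{\|p_i - z\|}\, \mathbb P_z(dz)$, where the restriction to $V_i(p)$ rather than its interior is harmless since the boundary is a finite union of hyperplane pieces, hence $\mathbb P_z$-null. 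Continuity of this expression in $p$ on $(\mathbb R^q)^n \setminus \mathsf D_n$ follows from another dominated-convergence argument, using that $p \mapsto \mathbf 1_{V_i(p)}(z)$ is continuous at $p$ for $\mathbb P_z$-a.e.\ $z$ (the discontinuity set is again contained in the measure-zero bisectors), together with continuity of the integrand.

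For the Lipschitz-on-compact-sets claim, I would argue directly from the pointwise estimate $|g(p,z) - g(p',z)| \leq \max_i |f(\|p_i - z\|) - f(\|p_i' - z\|)| \leq L' \max_i \|p_i - p_i'\|$, valid on any compact set of configurations, where $L'$ bounds $f'$ on the relevant compact range of arguments (using that $\min$ is $1$-Lipschitz in the sup norm of its arguments and $f$ is Lipschitz on compacts). Integrating over $z$ with respect to the probability measure $\mathbb P_z$ preserves this bound, so $|\mathcal E_n(p) - \mathcal E_n(p')| \leq L' \|p - p'\|$ on compact sets; this covers $\mathsf D_n$ as well, even though $\mathcal E_n$ need not be differentiable there.

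The main obstacle is the interchange of differentiation and expectation, and more precisely the careful handling of the null sets: one must check that the non-uniqueness set of the minimizing index, the coincidence events $z = p_i$, and the cell boundaries are all genuinely $\mathbb P_z$-null, and that the almost-sure derivative one writes down is the right one on the cell boundary (where $g$ may fail to be differentiable in $z$ but the set is null anyway). Assumption~\ref{assumption: hyperplanes} is exactly what makes all of this go through, and the $0/\|0\| := 0$ convention is what lets the formula be stated uniformly without excising $\{z = p_i\}$. I expect the rest — domination by a constant on compacts, dominated convergence for the difference quotients, continuity of the resulting integral — to be routine given these measure-zero facts.
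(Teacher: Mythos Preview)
Your proposal is correct and follows essentially the same route as the paper: a uniform Lipschitz bound on the integrand $g(p,z)=\min_i f(\|p_i-z\|)$ in $p$ (using compactness of $\mathsf Q$ and continuity of $f'$) gives both the Lipschitz claim and the domination needed to pass the derivative under the integral via dominated convergence on difference quotients, with Assumption~\ref{assumption: hyperplanes} disposing of the bisector and coincidence null sets; continuity of the gradient then follows by a second dominated-convergence argument. The paper merely packages the Lipschitz estimate and the differentiation-under-the-integral step as two preliminary lemmas before assembling them, but the substance is the same.
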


Now let us suppose, as in \cite{cortesTRO04_coverage} and most of the subsequent literature, 
that the event location distribution $\mathbb P_z$ is known. 
Using (\ref{eq: partial derivative distortion}), one can then implement a gradient descent algorithm to 
locally minimize the objective (\ref{eq: coverage simple general}) \cite{cortesTRO04_coverage}. 
Assuming for simplicity that the agents are synchronized, and a constant sampling period $T>0$, 
we denote the agents positions at time $k T$ by $p_{k} := p(kT) = [p_{1,k},\ldots,p_{n,k}]$.
The robots start at $p_0=[p_{1,0},\ldots,p_{n,0}]$ at $t = 0$ and update their positions according to
\begin{equation}	\label{eq: standard coverage gradient update}
p_{i,k+1} = p_{i,k} - \gamma_k \frac{\partial \mathcal E_n}{\partial p_i} \Big|_{p_k},
\end{equation}
where $\gamma_k$ is an appropriately chosen sequence of decreasing or small constant positive stepsizes.
We ignore for the moment the issue of non-differentiability on $\mathsf D_n$ as well as the minor  
modifications required to accommodate 
velocity constraints in (\ref{eq: standard coverage gradient update}).
The agents can implement (\ref{eq: standard coverage gradient update}) to
asymptotically reach a configuration that is a critical point of $\mathcal E_n$. 
No guarantee to reach a global minimum is offered in general, and indeed global minimization of the 
function (\ref{eq: coverage simple general}) can be difficult \cite{fekete05_median}.
Nevertheless, an interesting property of the gradient descent algorithm (\ref{eq: standard coverage gradient update}) 
for the coverage control problem is that it can be implemented in a distributed manner by the robots, by exploiting the 
form of the expression (\ref{eq: partial derivative distortion}).
In particular, each agent can update its position at each period according to 
(\ref{eq: standard coverage gradient update}) by communicating only 
with its current Voronoi neighbors, in order to determine the boundaries of its own Voronoi cell $V_i(p)$ 
and compute the integral (\ref{eq: partial derivative distortion}). 
Even in a large network, a single robot has typically only few Voronoi neighbors \cite{Aurenhammer91_VoronoiSurvey}, 
which allows for a scalable and distributed implementation of the gradient descent algorithm.

\begin{remark}
The specific case where $f(x) = x^2$ is considered in \cite{cortesTRO04_coverage} in more details. In this case (\ref{eq: partial derivative distortion}) gives
\begin{equation}	\label{eq: gradient LS coverage}
\frac{\partial \mathcal E_n}{\partial p_i}|_{p = p_k} = 2 \mathbb P_z(V_{i}(p_k)) p_{i,k} - \int_{V_{i}(p_k)} z \mathbb P_z(dz) . 
\end{equation}
Assuming that $\mathbb P_z(V_{i}(p_k)) \neq 0$, define the centroid of the Voronoi region $V_{i}(p_k)$ as 
\[
C_{V_{i}(p_k)} = \frac{1}{\mathbb P_z(V_{i}(p_k))} \int_{V_{i}(p_k)} z \mathbb P_z(dz).  
\]
Then control law (\ref{eq: standard coverage gradient update}), i.e.,
\begin{align}	
p_{i,k+1} &= p_{i,k} - \gamma_k \frac{\partial \mathcal E_n}{\partial p_i} \Big|_{p_k} \nonumber \\
&= p_{i,k} - 2 \gamma_k \mathbb P_z(V_{i}(p_k)) (p_{i,k} -  C_{V_{i}(p_k)}), \label{eq: quadratic deterministic coverage control}
\end{align}
is essentially the well-known least-squares quantization algorithm of Lloyd \cite{Lloyd82_Quantization}.
\end{remark}

Note that the computation of the updates (\ref{eq: standard coverage gradient update}) requires $\mathbb P_z$ to be
perfectly known.
The minimization of (\ref{eq: coverage simple general}) is then essentially an open-loop optimization problem, and
the network can reach its desired configuration \emph{before any event occurs}. However, the algorithm 
does not provide any mechanism to adapt the configuration based on the actual observations of where the events occur,
which is critical in practice as $\mathbb P_z$ is rarely available. In the next section, we show how to generally address this issue 
by using stochastic gradient algorithms. 
Subsection \ref{subsection: stochastic coverage} applies the method specifically to the adaptive coverage control problem.


\subsection{Stochastic Gradient Algorithms}		\label{section: stochastic approximation theory}

Suppose that we wish to minimize a function $G$ defined on $\mathbb R^m$ for some $m \geq 0$, of the form
\begin{align}	
G(x) = E[g(x,Z)] &= \int_\Omega g(x,Z(\omega)) P(d \omega) \nonumber \\
&= \int_\mathsf Z g(x,z) \mathbb P_z(dz), \label{eq: expectation minimization}
\end{align}
such as $\mathcal E_n$ defined in (\ref{eq: coverage simple general}) for example.
The space $\mathsf Z$ in (\ref{eq: expectation minimization}) is the range of the random variable $Z$.
In contrast to the previous subsection, we now assume that $\mathbb P_z$ is unknown, 
so that the expectation (\ref{eq: expectation minimization}) cannot be computed directly. 
Suppose that $g$ is differentiable with respect to $x$, for $\mathbb P_z$-almost all $z$, 
and denote its gradient $\nabla_x g(x,z) := \frac{\partial g(x,z)}{\partial x}$. 
Finally, assume that we can observe random variables $Z_k, k \geq 1$, iid with distribution $\mathbb P_z$. 
Consider then the stochastic recursive algorithm
\begin{equation}	\label{eq: iterations EM algo}
x_{k+1} = x_k - \gamma_k \nabla_x g(x_k,Z_{k+1}),
\end{equation}
which can be rewritten in the form
\begin{equation}	\label{eq: SA general form}
x_{k+1} = x_k + \gamma_k (h(x_k) + D_{k+1}),
\end{equation}
with $h(x) := - E[\nabla_x g(x,Z_{1})|x]$ and $D_{k+1} = -\nabla_x g(x_k,Z_{k+1}) + E[\nabla_x g(x_k,Z_{k+1})|x_k]$.
Define for $k \geq 0$ the filtration $\mathcal F_k := \sigma(x_0, D_i, 1 \leq i \leq k)$,
i.e., an increasing family $\mathcal F_k \subset \mathcal F_l$ for $k \leq l$ of sub-$\sigma$-algebras of $\mathcal F$.
Then $\{D_k\}_{k \geq 1}$ is a \emph{martingale difference sequence} (MDS) with respect to $\{\mathcal F_k\}_{k \geq 0}$, as explained in the following definition.
\begin{definition}
Let $\{\mathcal F_k\}_{k \geq 0}$ be a filtration. 
A sequence of random variables $\{ D_k \}_{k \geq 1}$ is called a martingale difference sequence
with respect to $\{\mathcal F_k\}_{k \geq 0}$ if $D_{k}$ is measurable with respect to $\mathcal F_{k}$, 
$E[\|D_{k}\|] < \infty$, and we have
$E[D_{k} | \mathcal F_{k-1}] = 0$, for all $k \geq 1$.
\end{definition}

Intuitively, the MDS $\{D_k\}_{k \geq 1}$ plays the role of a zero-mean noise. 
By the ODE method \cite{Ljung77_SA}, we can expect that asymptotically, under the condition
\begin{equation}	\label{eq: classical stepsize condition}
\gamma_k \geq 0, \;\; \sum_{k = 0}^\infty \gamma_k = +\infty, \;\; \sum_{k = 0}^\infty \gamma_k^2 < +\infty,
\end{equation}
on the stepsizes, which holds for $\gamma_k = 1/(1+k)$ for example, 
the sequence $\{x_k\}_{k \geq 0}$ in (\ref{eq: SA general form}) 
almost surely approaches the trajectories of the ODE
\begin{equation}	\label{eq: limit ODE}
\dot x = h(x).	
\end{equation}
Now assume that it is valid to exchange expectation and derivation, as follows
\begin{align}	
- \nabla G(x) &= -\nabla E[g(x,Z_1)|x] \nonumber \\
&= -E[\nabla_x g(x,Z_1) | x] = h(x). \label{eq: diff under integral}
\end{align}
Identity (\ref{eq: diff under integral}) can often be proved using the 
dominated convergence theorem, see, e.g., \cite[Theorem 5.1]{Spall03_SA}.
Then the ODE (\ref{eq: limit ODE}) describes a gradient flow and so in fact under mild assumptions
the trajectories and therefore the sequence $\{x_k\}_{k \geq 0}$ approach the critical points of $G$.
Moreover, we can often expect convergence to the set of local minima of $G$ almost surely \cite[chapter 4]{Borkar08_SA}. 
In conclusion, the algorithm (\ref{eq: SA general form}) allows us to reach such a minimum even though $\mathbb P_z$ is unknown,
as long as we have access to realizations of the random variables $Z_k$.

We now capture the intuition above more formally,
including the situation where the function $G$ is not everywhere differentiable, 
as in Proposition \ref{prop: derivative of distortion}.
Consider a stochastic algorithm 
\begin{align}	\label{eq: gradient SA general}
x_{k+1} = x_k + \gamma_k (h_k + D_{k+1}), \;\; \forall k \geq 0,
\end{align}
where the stepsizes $\gamma_k$ satisfy (\ref{eq: classical stepsize condition}), 
$\{D_k\}_{k \geq 1}$ is an MDS with respect to the filtration $\mathcal F_k := \{x_l,h_l,D_l,l \leq k\}, k \geq 0$, 
and $h_k$ is specified in the following theorems.

\begin{thm}	\label{thm: standard thm ODE}
Assume that $G$ is continuously differentiable on $\mathbb R^m \setminus \mathsf S$, 
with $\mathsf S$ a set of Lebesgue measure zero.  
Introduce the Filipov set-valued map \cite{Cortes08_discontinuous}
\begin{equation}	\label{eq: set value map gradient}
\mathcal H(x) 
= \begin{cases}
\{-\nabla G(x)\}, & x \notin \mathsf S, \\
\bigcap_{\delta > 0} \overline{\text{co}} \left( \bigcup_{\hat x \in B(x,\delta) \setminus \mathsf S} \{- \nabla G(\hat x)\} \right),
& x \in \mathsf S,
\end{cases}
\end{equation}
where $\overline{\text{co}}$ denotes the closed convex hull.
Consider the recurrence (\ref{eq: gradient SA general}) 
with $h_k \in \mathcal H(x_k)$, for all $k \geq 0$.
Assume that for some positive constants $K_1, K_2$ we have
\begin{align*}
& \sup_{h \in \mathcal H(x)} \|h\| \leq K_1 (1+\|x\|), \;\; \forall x \in \mathbb R^m, \\
& E[\|D_{k+1}\|^2 | \mathcal F_k] \leq K (1+\|x_k\|^2), \;  \text{ a.s. }, \forall k \geq 0, 
\end{align*}
and that $\sup_{k \geq 0} \|x_k\| < \infty,$ a.s.
Then the sequence $\{x_k\}_{k \geq 0}$ converges almost surely 
to a connected subset of $\{ x \in \mathbb R^m \setminus \mathsf S | \nabla G(x) = 0 \} \cup \mathsf S$, 
invariant for the differential inclusion $\dot x \in \mathcal H(x)$.
\end{thm}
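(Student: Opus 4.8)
The plan is to recognize Theorem \ref{thm: standard thm ODE} as an instance of the general convergence theory for stochastic approximation with set-valued mean fields (the ODE/differential-inclusion method), so that the bulk of the work is verifying the hypotheses of a known result rather than reproving convergence from scratch. Concretely, I would invoke the differential-inclusion version of the Kushner--Clark / Bena\"{\i}m--Hofbauer--Sorin framework (see \cite{Borkar08_SA, Cortes08_discontinuous}): under a linear-growth bound on the set-valued map, a square-integrability bound on the martingale difference noise conditioned on $\mathcal F_k$, the Robbins--Monro stepsize condition (\ref{eq: classical stepsize condition}), and a.s.\ boundedness of the iterates, the interpolated process is a.s.\ an asymptotic pseudotrajectory of the flow of $\dot x \in \mathcal H(x)$, hence its limit set is internally chain transitive and in particular a connected invariant set for the inclusion. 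So the skeleton is: (i) check that $\mathcal H$ as defined in (\ref{eq: set value map gradient}) is a Marchaud map (nonempty compact convex values, upper semicontinuity, linear growth); (ii) quote the asymptotic-pseudotrajectory theorem; (iii) identify the limit set more precisely using a Lyapunov-type argument.

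First I would verify the regularity of $\mathcal H$. Off $\mathsf S$ the map is single-valued and continuous because $G$ is $C^1$ there; on $\mathsf S$ the Filippov regularization (\ref{eq: set value map gradient}) is by construction closed, convex-valued and upper semicontinuous, and it is nonempty precisely because $\mathsf S$ has Lebesgue measure zero so that $B(x,\delta)\setminus\mathsf S$ is nonempty for every $\delta>0$ and the relevant sets of gradients are bounded by the linear-growth hypothesis $\sup_{h\in\mathcal H(x)}\|h\|\le K_1(1+\|x\|)$. Thus $\mathcal H$ is Marchaud and the differential inclusion $\dot x\in\mathcal H(x)$ has solutions through every point. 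The noise hypothesis $E[\|D_{k+1}\|^2\mid\mathcal F_k]\le K(1+\|x_k\|^2)$ together with a.s.\ boundedness $\sup_k\|x_k\|<\infty$ gives $\sum_k\gamma_k^2 E[\|D_{k+1}\|^2\mid\mathcal F_k]<\infty$ a.s., so the martingale $\sum_k\gamma_k D_{k+1}$ converges a.s.; this is the standard ingredient that kills the noise term and lets the Arzel\`a--Ascoli / Gronwall estimate show that the piecewise-linear interpolation of $\{x_k\}$ tracks the solution set of the inclusion on every bounded time window.

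With the asymptotic-pseudotrajectory property in hand, the limit set $L(\{x_k\})$ is a nonempty, compact, connected, internally chain transitive set that is invariant under $\dot x\in\mathcal H(x)$. The remaining step is to show $L(\{x_k\})\subset\{x\in\mathbb R^m\setminus\mathsf S\mid\nabla G(x)=0\}\cup\mathsf S$. For this I would use that $G$ acts as a (weak) Lyapunov function for the inclusion: along any solution $x(\cdot)$ lying in $\mathbb R^m\setminus\mathsf S$ on an interval, $\frac{d}{dt}G(x(t))=\langle\nabla G(x(t)),\dot x(t)\rangle=-\|\nabla G(x(t))\|^2\le 0$, strictly negative unless $\nabla G(x(t))=0$; on $\mathsf S$ one uses the convex-hull structure of $\mathcal H$ and measure-zero of $\mathsf S$ to argue $G$ is still nonincreasing in the Caratheodory sense. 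An internally chain transitive invariant set on which a continuous Lyapunov function cannot strictly decrease must be contained in the union of $\mathsf S$ and the zero set of $\nabla G$ (otherwise one produces a point from which every trajectory strictly decreases $G$, contradicting chain recurrence). This yields the stated conclusion.

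The main obstacle I expect is the handling of $\mathsf S$: because $\mathsf S$ is only assumed Lebesgue-null (not closed, not lower-dimensional with any structure), one must be careful that (a) $\mathcal H(x)$ is genuinely nonempty and bounded for $x\in\mathsf S$, which relies on the linear-growth assumption applied uniformly near $x$, and (b) the Lyapunov/chain-transitivity argument still rules out the iterates parking on a bad subset of $\mathsf S$ that is invariant yet not critical --- which is exactly why the theorem only claims convergence to a subset of $\{\nabla G=0\}\cup\mathsf S$ rather than to the critical set itself. In the intended application, Proposition \ref{prop: derivative of distortion} shows $\mathsf S=\mathsf D_n$, and one checks separately that $\mathsf D_n$ is repelling or that the iterates avoid it a.s., but that is outside the scope of this theorem.
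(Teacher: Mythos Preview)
Your proposal is correct and matches the paper's treatment: the paper does not give an independent proof of this theorem but simply cites the standard differential-inclusion stochastic-approximation results \cite{Kushner03_SA}, \cite[Chapter 5]{Borkar08_SA}, and points to the proof of Theorem~\ref{thm: main result coverage} for the Lyapunov step. The argument you sketch---verify $\mathcal H$ is Marchaud, invoke the asymptotic-pseudotrajectory theorem to get convergence to a compact connected invariant set, then use $G$ as a Lyapunov function to conclude that any limit point outside $\mathsf S$ must satisfy $\nabla G=0$---is exactly the structure the paper relies on (the last step appears explicitly in Appendix~\ref{section: appdx pf coverage control}).
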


\begin{thm}	\label{thm: standard thm ODE - convex case}
Assume that $G$ is convex and admits a minimum on $\mathbb R^m$.
Consider the recurrence (\ref{eq: gradient SA general}) with 
$h_k$ a subgradient of $G$ at $x_k$, for all $k \geq 0$.
Assume that there exists a positive constant $K$ such that
$E[\|h_k + D_{k+1}\|^2| \mathcal F_k] \leq K$, for all $k \geq 0$.
Then the sequence $\{x_k\}_{k \geq 0}$ converges almost surely to some point minimizing $G$.
\end{thm}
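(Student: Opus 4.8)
The plan is to reduce the claim to the classical Robbins--Siegmund almost-sure supermartingale convergence argument for stochastic subgradient \emph{descent}. (Consistently with (\ref{eq: gradient SA general}) and Theorem~\ref{thm: standard thm ODE}, I read $h_k$ as the descent direction, i.e.\ $-h_k$ is a subgradient of $G$ at $x_k$.) Fix any minimizer $x^\star$ of $G$, set $G^\star := G(x^\star)$ and $v_k := \|x_k - x^\star\|^2$. The subgradient inequality then gives $\langle x_k - x^\star,\, h_k\rangle \le -(G(x_k) - G^\star) \le 0$. Expanding the square in (\ref{eq: gradient SA general}) yields $v_{k+1} = v_k + 2\gamma_k\langle x_k - x^\star,\, h_k + D_{k+1}\rangle + \gamma_k^2\|h_k + D_{k+1}\|^2$. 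Taking conditional expectations given $\mathcal F_k$ on both sides and using that $x_k$ and $h_k$ are $\mathcal F_k$-measurable, $E[D_{k+1}\mid\mathcal F_k]=0$ by the MDS property, $E[\|h_k + D_{k+1}\|^2\mid\mathcal F_k]\le K$ by hypothesis, and the subgradient inequality, I obtain
\[
E[v_{k+1}\mid\mathcal F_k]\;\le\; v_k - 2\gamma_k\bigl(G(x_k)-G^\star\bigr) + K\gamma_k^2 .
\]

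Next I would apply the Robbins--Siegmund lemma to the nonnegative quantities $v_k$, $b_k := 2\gamma_k(G(x_k)-G^\star)$ and $c_k := K\gamma_k^2$: since $\sum_k c_k = K\sum_k\gamma_k^2 < \infty$ by (\ref{eq: classical stepsize condition}), it follows that, almost surely, $v_k=\|x_k-x^\star\|^2$ converges to a finite limit and $\sum_k\gamma_k\bigl(G(x_k)-G^\star\bigr) < \infty$. Every term of the latter series is nonnegative, so if $\liminf_k\bigl(G(x_k)-G^\star\bigr)$ were strictly positive the series would diverge in view of $\sum_k\gamma_k=\infty$; hence $\liminf_k G(x_k) = G^\star$ almost surely.

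To upgrade these two facts into convergence of $\{x_k\}$ itself, note that the solution set $X^\star := \arg\min_{\mathbb R^m} G$ is nonempty, closed and convex; fix a countable dense subset $\{y_j\}_{j\ge1}$ of $X^\star$. Running the previous step with $x^\star=y_j$ for each $j$ and intersecting the corresponding probability-one events gives a single event of probability one on which $\|x_k-y_j\|$ converges for every $j$ and $\liminf_k G(x_k)=G^\star$; by density and the triangle inequality, $\|x_k-x^\star\|$ then converges for \emph{every} $x^\star\in X^\star$. Work on this event. Since $\|x_k-y_1\|$ converges, $\{x_k\}$ is bounded, so there is a subsequence along which $G(x_k)\to G^\star$ and, refining once more, $x_{k_l}\to\bar x$ for some $\bar x$. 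A finite convex function on $\mathbb R^m$ is continuous, hence $G(\bar x)=G^\star$, i.e.\ $\bar x\in X^\star$; therefore $\|x_k-\bar x\|$ converges, and since it tends to $0$ along $\{k_l\}$ the whole sequence converges: $x_k\to\bar x$, a minimizer of $G$.

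The step I expect to be the main obstacle is this last one: the limit point $\bar x$ is random, so one cannot directly invoke the Fej\'er-type inequality of the first paragraph ``at $\bar x$'', and the countable-dense-subset intersection is precisely the device that circumvents this measure-theoretic issue. The remaining ingredients are routine: the conditional-expectation bookkeeping above, checking $\mathcal F_k$-measurability of $h_k$ and $x_k$, and the tacit assumption that $G$ is finite (hence continuous, with subgradients at every iterate) on all of $\mathbb R^m$. Alternatively one could simply quote a packaged stochastic subgradient convergence theorem, e.g.\ in the spirit of \cite{Borkar08_SA}, but the self-contained argument sketched here is short enough to include in full.
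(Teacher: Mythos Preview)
Your argument is correct and is precisely the classical Robbins--Siegmund route; in particular, your reading of the sign convention (that $-h_k$, not $h_k$, is the subgradient, consistently with (\ref{eq: gradient SA general}) and Theorem~\ref{thm: standard thm ODE}) is the right one, and the countable-dense-subset device to pin down the random limit point is the standard fix. The paper itself does not prove this theorem: it simply refers to \cite{Kushner03_SA}, \cite[Chapter~5]{Borkar08_SA} and \cite[Proposition~8.2.6, p.~480]{Bertsekas03_convexBook}, and the Bertsekas proof is exactly the supermartingale-plus-Fej\'er-monotonicity argument you wrote out, so your approach coincides with the one the paper defers to.
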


The proofs of these theorems are standard and not repeated here, 
see \cite{Kushner03_SA}, \cite[chapter 5]{Borkar08_SA}, \cite[Proposition 8.2.6. p. 480]{Bertsekas03_convexBook}
and the proof of Theorem \ref{thm: main result coverage} in Appendix \ref{section: appdx pf coverage control}.
Note that in many applications, the stepsizes $\gamma_k$ are chosen to converge to a small positive constant instead of
satisfying (\ref{eq: classical stepsize condition}), which allows tracking of the equilibria of the gradient flow
if the problem parameters (e.g., $\mathbb P_z$) change with time.
In this case, one typically obtains convergence to a neighborhood of a critical point \cite{Benveniste90_SA}.
The selection of proper stepsizes is an important practical issue that is not emphasized in this paper 
but is discussed at length in references on stochastic approximation algorithms \cite{Kushner03_SA, Spall03_SA}.


\subsection{Adaptive Coverage Control} \label{subsection: stochastic coverage}

We now consider the following modification of the coverage control problem. 
The events occur randomly in the workspace, with event $k \geq 1$ occuring at time $t_k > 0$ and 
location $Z_k \in \mathsf Q$. We let $t_0 := 0$ denote the initial time. 
Assume in this subsection that the successive locations of the events $Z_k, k \geq 1,$ 
are iid with probability distribution $\mathbb P_z$ on $\mathsf Q$.
The distribution $\mathbb P_z$ is now unknown, and as a result the deterministic gradient descent 
algorithm (\ref{eq: standard coverage gradient update}) cannot be implemented. 
We work under Assumption \ref{assumption: hyperplanes}, 
so that the gradient expression (\ref{eq: partial derivative distortion}) holds.

We denote the agent positions at time $t^-_k$, i.e., right before the occurrence of 
the $k^{th}$ event, by $p_{k-1} = [p_{1,{k-1}},\ldots,p_{n,{k-1}}] \in (\mathbb R^{q})^n$, for $k \geq 1$. 
These positions are called \emph{reference positions} and are updated according to
\begin{equation}	\label{eq: trivial dynamics discrete time}
p_{i,k+1} = p_{i,k} + u_{i,k}, \;\; |u_{i,k}| \leq v_{i,k}, \; \forall k \in \mathbb Z_{\geq 0}, \forall i \in [n],
\end{equation}
where $u_{i,k} \in \mathbb R^q$ is a control input for the interval $[t_k,t_{t+1})$. 
For example, if the robot dynamics follow the model (\ref{eq: trivial dynamics continuous time}) 
and if servicing the targets requires no additional travel, we can take $v_{i,k} = v_i (t_{k+1} - t_k)$ for all $i \in [n]$. 
We assume that there exists a constant $v>0$ such that $v_{i,k} \geq v$ for all $i \in [n]$ and $k \geq 0$, 
so that the robots can update their reference positions by a non-vanishing positive distance at each period.

When the $k^{th}$ event occurs at time $t_k$ and position $Z_k \in \mathsf Q$, 
we assume that at least the robot closest to that event location can observe it. 
This robot, say robot $i$, services the target starting from its location $p_{i,k-1}$, 
and then moves to a new reference position $p_{i,k}$.
The following reference position updates 
implement the stochastic gradient algorithm (\ref{eq: iterations EM algo}) 
to minimize the coverage objective (\ref{eq: coverage simple general}).
First, for a vector $u \in \mathbb R^q$ and a scalar $b > 0$, define the truncation $[\text{sat}(u)]_{b}$ by
\begin{equation*}
[\text{sat}(u)]_{b} = \begin{cases}
u, & \text{ if } \|u\| \leq b, \\
b \frac{u}{\|u\|}, & \text{ if } \|u\| > b.
\end{cases}
\end{equation*}
Then consider the update rule
\begin{align}	
&p_{i,k+1} = \label{eq: update law Voronoi case} \\
&\begin{cases}
\Pi_\mathsf Q \left [p_{i,k} + \text{sat}\left[ \gamma_k f'(\|p_{i,k} - Z_{k+1}\|) 
\frac{Z_{k+1}-p_{i,k}}{\|Z_{k+1}-p_{i,k}\|} \right]_{v_{i,k}} \right] \\
\hspace{2.5cm} \text{if } i \in \arg \min_{j \in [n]} \|p_{j,k}-Z_{k+1}\|, \\
p_{i,k} \text{ otherwise,} \nonumber
\end{cases}
\end{align}
where $\Pi_\mathsf Q$ is the orthogonal projection on the convex set $\mathsf Q$. 
Note that the situation where several robots are at equal distance from $Z_k$ 
and simultaneously update their reference position 
occurs with probability zero under Assumption \ref{assumption: hyperplanes}.
To justify (\ref{eq: update law Voronoi case}) based on
the discussion in the previous subsection, let 
$
g(p,z) = \min_{i \in [n]} f(\|p_i-z\|)$, i.e., $g(p,z) = f(\|p_{i^*}-z\|) \text{ for } z \in V_{i^*}(p)$.
Then we have
\[
\frac{\partial g}{\partial p_i}(p,z) = \begin{cases}
f'(\|p_i-z\|) \frac{p_i - z}{\|p_i - z\|}, & \text{if } z \in \text{Int}(V_i(p)) \setminus \{p_i\} \\
0, & \text{if } z \notin V_i(p).
\end{cases}
\]
Moreover let us define $(\partial g / \partial p)(p,z)$ arbitrarily for $z$ on the 
Voronoi cell boundaries and at the points $p_i$. These sets have $\mathbb P_z$-measure zero 
under Assumption \ref{assumption: hyperplanes}, and hence do not contribute to the integral
\begin{align*}
E \left[ \frac{\partial g}{\partial p_i}(p,Z) \right] 
&= \int_{V_i(p)} f'(\|p_i-z\|) \frac{p_i - z}{\|p_i - z\|} \mathbb P_z(dz) \\
&= \frac{\partial \mathcal E_n}{\partial p_i}(p), \;\; \text{for $p \notin \mathsf D_n$}.
\end{align*}
In other words, Proposition \ref{prop: derivative of distortion} precisely says that 
the identity (\ref{eq: diff under integral})
is valid for $\mathcal E_n$ in $(\mathbb R^q)^n \setminus \mathsf D_n$.
Note also that almost surely the update rule (\ref{eq: update law Voronoi case}) 
never results in two robots landing on the same position as long as $q \geq 2$ and the
updated reference position before projection remains in $\mathsf Q$, 
because this would require $Z_{k+1}$ to fall on 
the line passing through these two robot reference positions.
This can be achieved for $q=1$ or for a reference position projected 
on the boundary of $Q$ as well, by a small random perturbation of the sequence $\gamma_k$ \cite[Chapter 2]{Borkar08_SA}. 
Hence we can assume in the following that almost surely $p_k \notin \mathsf D_n$ for all $k \geq 1$.
Moreover, the projection $\Pi_\mathsf Q$ and the saturation nonlinearity
do not change the convergence properties of the algorithm, see Appendix \ref{appendix: Lloyd gradient flow}.
Therefore, (\ref{eq: update law Voronoi case}) is essentially the stochastic gradient descent 
update rule (\ref{eq: iterations EM algo}).

It is interesting to compare the implementation complexity of algorithm (\ref{eq: update law Voronoi case}) 
with that of the corresponding deterministic gradient descent update based 
on (\ref{eq: partial derivative distortion}),  (\ref{eq: standard coverage gradient update}).
The deterministic, model-based algorithm requires that each agent 
maintains communication with its Voronoi neighbors and knows their position
in order to determine the boundaries of its Voronoi cell and compute 
the integral (\ref{eq: partial derivative distortion}). 
Even in the quadratic case (\ref{eq: quadratic deterministic coverage control}), 
this scheme can be difficult to implement.
In contrast, no Voronoi cell computation or integration and no detailed knowledge 
of the position of the neighbors is required by (\ref{eq: update law Voronoi case}), 
which only needs a distributed mechanism to find which robot is the closest to the target when it appears.
This can be done in a distributed way via the \texttt{FloodMin} algorithm described in Paragraph \ref{section: min-consensus}, 
with the agents initializing their value to $\hat d_i = \|p_{i,k} - Z_{k+1}\|$ if they detect the event, and to $\hat d_i = + \infty$ 
if they are too far away to detect it. 
Clearly there are other ways to implement the rule (\ref{eq: update law Voronoi case}). 
For example, we could let all the robots travel to the event location at the same speed, 
as in \cite{Arsie09_routing}, a scheme that does not require any coordination.  
Then only the first robot to reach the target changes its reference position for the next period.

\emph{Special Cases:} If we specialize (\ref{eq: update law Voronoi case}) 
to the least-squares coverage control problem with $f(x) = x^2$ and ignore the saturation function,
we obtain the update $p_{i,k+1} = p_{i,k}+\gamma_k (Z_{k+1}-p_{i,k})$ for the closest robot. 
This particular adaptive algorithm has been used extensively in various fields, 
from statistics to quantization to neural 
networks \cite{MacQueen67_Kmeans, Kohonen82_SOM, Gray84_vectorQuantization}.
If $f(x) = x$ and all robots travel at unit speed, the service cost for 
an event appearing at $Z_k$ is the time it takes for the closest robot 
to travel to the event location. In this case, the update rule (\ref{eq: update law Voronoi case}) 
is simply $p_{i,k+1} = p_{i,k}+\gamma_k \frac{Z_{k+1}-p_{i,k}}{\|Z_{k+1}-p_{i,k}\|}$ 
for the closest robot. It is used in the vehicle routing application discussed
in Section \ref{section: vehicle routing problems}.

\begin{remark}
For certain distributions and initial robot positions outside of the support set of the distribution $\mathbb P_z$, 
it is possible that by following (\ref{eq: update law Voronoi case}), some agents never move. 
The issue also arises with the deterministic algorithm, 
since the gradient (\ref{eq: gradient LS coverage}) vanishes if $\mathbb P_z(V_{i}(p_k)) = 0$.
A possible solution to avoid this phenomenon is to add an initial transient regime 
where for example all agents follow the first case of the rule (\ref{eq: update law Voronoi case}) 
rather than only the closest agent. The goal of this transient modification is to bring 
all the robots within the support set of the event distribution. 
It is either stopped at some finite time or discounted by a stepsize decreasing 
much faster that $\gamma_k$, thereby not impacting the convergence results \cite{Borkar08_SA}.
\end{remark}

We now state a convergence result for the update law (\ref{eq: update law Voronoi case}) 
to the set of critical points of the objective $\mathcal E_n$, i.e., to
\begin{equation}	\label{eq: critical points of distortion}
\mathsf C_n = \{p \in \mathsf Q^n \setminus \mathsf D_n | \nabla \mathcal E_n(p) = 0\}.
\end{equation}
Even though the algorithm is a stochastic gradient algorithm, the discontinuity of $\nabla \mathcal E_n$ 
on the set $\mathsf D_n$ creates technical difficulties. To the best of our knowledge, the most thorough 
investigation of the dynamics of (\ref{eq: update law Voronoi case}) can be found in \cite{Pages98_quantization} 
and leaves open the question of non-convergence to $\mathsf D_n$.
In contrast to that paper, we cope with the non-differentiability on $\mathsf D_n$ 
by introducing the Filippov set-valued map $\mathcal H_n$ as in (\ref{eq: set value map gradient})
\begin{equation}	\label{eq: set value map gradient - specific}
\mathcal H_n(x) 
= \begin{cases}
\{-\nabla \mathcal E_n(x)\}, & x \notin \mathsf D_n, \\
\bigcap_{\delta > 0} \overline{\text{co}} \left( \bigcup_{\hat x \in B(x,\delta) \setminus \mathsf D_n} \{- \nabla \mathcal E_n(\hat x)\} \right),
& x \in \mathsf D_n.
\end{cases}
\end{equation}
We also need the following definition. A Borel measure $\mu$ on $\mathbb R^q$ is said to dominate the 
Lebesgue measure $\lambda$ if $\lambda(A)  = 0$ for all Borel sets $A$ such that $\mu(A) = 0$. 

\begin{thm}	\label{thm: main result coverage}
Let the stepsizes $\gamma_k$ satisfy (\ref{eq: classical stepsize condition}), $p_0 \in \mathsf Q^n \setminus \mathsf D_n$, and 
suppose that Assumption \ref{assumption: hyperplanes} holds.
Then, by following the algorithm (\ref{eq: update law Voronoi case}), 
the sequence $\{p_k\}_{k \geq 0}$ of robot positions converges almost surely to 
a compact connected subset of $\mathsf C_n \cup (\mathsf D_n \cap \mathsf Q^n)$, 
invariant for the differential inclusion $\dot p \in \mathcal H_n(p)$.

If in addition $\mathbb P_z$ dominates the Lebesgue measure on $\mathsf Q$, 
then the sequence $\{p_k\}_{k \geq 0}$ converges almost surely to a compact connected subset 
of $\mathsf C_n$. In particular if $\mathcal E_n$ has only isolated critical points 
in $\mathsf Q^n \setminus \mathsf D_n$, the sequence $\{p_k\}_{k \geq 0}$ 
converges to one of them almost surely. 
\end{thm}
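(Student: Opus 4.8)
The plan is to apply Theorem \ref{thm: standard thm ODE} with $G = \mathcal E_n$, $m = qn$, and $\mathsf S = \mathsf D_n$, after verifying its hypotheses and then handling the extra subtleties introduced by the projection $\Pi_\mathsf Q$, the saturation, and the randomized selection of the updating robot. First I would identify the recursion (\ref{eq: update law Voronoi case}) with the abstract form (\ref{eq: gradient SA general}). As the excerpt already explains, with $g(p,z) = \min_i f(\|p_i - z\|)$ one has $E[\partial g/\partial p_i (p, Z)] = \partial \mathcal E_n / \partial p_i(p)$ for $p \notin \mathsf D_n$ by Proposition \ref{prop: derivative of distortion}; so setting $h_k = -\nabla \mathcal E_n(p_k)$ when $p_k \notin \mathsf D_n$ (which, by the line-avoidance argument in the text, happens a.s.\ for all $k \geq 1$ after the possible perturbation of $\gamma_k$), the increment $-\partial g/\partial p(p_k, Z_{k+1})$ decomposes as $h_k + D_{k+1}$ with $D_{k+1}$ a martingale difference sequence with respect to $\mathcal F_k = \{p_l, h_l, D_l, l \leq k\}$, since $Z_{k+1}$ is independent of $\mathcal F_k$. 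The MDS second-moment bound $E[\|D_{k+1}\|^2 \mid \mathcal F_k] \leq K(1 + \|p_k\|^2)$ follows because $f'$ is continuous hence bounded on the compact set of relevant distances (all points lie in $\mathsf Q$, which is compact), and the unit-vector factor has norm one; in fact $D_{k+1}$ is uniformly bounded. The linear-growth bound on $\sup_{h \in \mathcal H_n(x)}\|h\|$ is immediate from (\ref{eq: partial derivative distortion}): the integrand is bounded by $\sup f'$ over the compact set, and the Filippov hull of bounded sets stays bounded. Finally, $\sup_k \|p_k\| < \infty$ a.s.\ is trivially satisfied because every $p_{i,k} \in \mathsf Q$, a compact set — this is precisely the role of the projection $\Pi_\mathsf Q$.

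Next I would argue that the projection and saturation do not alter the limiting ODE, referring to the discussion in Appendix \ref{appendix: Lloyd gradient flow} and to the standard theory of projected stochastic approximation \cite[chapter 5]{Borkar08_SA}, \cite{Kushner03_SA}. The saturation at level $v_{i,k} \geq v > 0$ only affects finitely many early steps almost surely, because $\gamma_k \to 0$ and the unsaturated increment is bounded, so eventually $\|\gamma_k f'(\cdot)(\cdot)\| \leq v \leq v_{i,k}$ and the saturation is inactive; hence it contributes no bias asymptotically. For the projection, since $\mathsf Q$ is convex and the gradient flow of $\mathcal E_n$ keeps trajectories starting in $\mathsf Q^n$ inside $\mathsf Q^n$ (a fact established in the appendix, essentially because $-\partial \mathcal E_n/\partial p_i$ points toward the centroid-like integral of $V_i(p) \subset \mathsf Q$ and therefore into $\mathsf Q$), the projected dynamics coincide with the unprojected ones on the interior and the projection term vanishes in the limit. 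Invoking Theorem \ref{thm: standard thm ODE} then yields that $\{p_k\}$ converges a.s.\ to a connected subset of $\{p \in \mathsf Q^n \setminus \mathsf D_n \mid \nabla \mathcal E_n(p) = 0\} \cup \mathsf D_n = \mathsf C_n \cup (\mathsf D_n \cap \mathsf Q^n)$, invariant for $\dot p \in \mathcal H_n(p)$; compactness of the limit set follows from boundedness in $\mathsf Q^n$.

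For the second part, suppose $\mathbb P_z$ dominates Lebesgue measure on $\mathsf Q$. I would show that no nonempty invariant subset of $\mathsf D_n \cap \mathsf Q^n$ can be a limit set, which forces the limit into $\mathsf C_n$. The idea is that $\mathsf D_n$ is a stratified set of codimension $\geq q \geq 1$, and along any configuration with $p_i = p_j$, the Filippov inclusion $\mathcal H_n$ contains directions that separate $p_i$ and $p_j$: splitting the coincident Voronoi cell between the two generators and computing the one-sided limits of $-\nabla \mathcal E_n$ shows that the ``relative'' component $-(\partial \mathcal E_n/\partial p_i - \partial \mathcal E_n/\partial p_j)$ does not vanish because the domination hypothesis guarantees $\mathbb P_z(V_i(p)) > 0$ whenever $V_i(p)$ has nonempty interior, so the repulsive drift pushing the two robots apart is strictly positive in an averaged sense; hence $\mathsf D_n \cap \mathsf Q^n$ contains no nontrivial invariant set for the inclusion, and the convergence upgrades to a connected subset of $\mathsf C_n$. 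If moreover the critical points are isolated, a connected subset of $\mathsf C_n$ is a single point, giving convergence to one critical point a.s. The main obstacle I anticipate is exactly this last geometric argument — a clean proof that $\mathsf D_n \cap \mathsf Q^n$ supports no invariant set of the differential inclusion, which requires a careful case analysis of which Voronoi cells degenerate as robots coalesce (pairs, triples, etc.) and a quantitative lower bound on the splitting drift in each stratum; this is where the domination assumption is genuinely used and where the bulk of the technical work lies.
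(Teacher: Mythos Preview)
Your treatment of the first part is essentially the paper's argument: rewrite (\ref{eq: update law Voronoi case}) as (\ref{eq: gradient SA general}), check the MDS and growth conditions using boundedness of $f'$ on the compact range of distances, note that $\Pi_{\mathsf Q}$ keeps the iterates in $\mathsf Q^n$ so $\sup_k\|p_k\|<\infty$, observe that the saturation is eventually inactive since $\gamma_k\to 0$, and that the projected dynamics coincide with the unprojected ones because the flow of $\mathcal E_n$ is inward on $\partial\mathsf Q^n$. Then Theorem~\ref{thm: standard thm ODE} gives the conclusion $S\subset \mathsf C_n\cup(\mathsf D_n\cap\mathsf Q^n)$.

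The second part, however, does not go through as you propose. Your plan is to show that $\mathsf D_n\cap\mathsf Q^n$ carries no nonempty set invariant for the differential inclusion $\dot p\in\mathcal H_n(p)$, by arguing that $\mathcal H_n$ ``contains directions that separate $p_i$ and $p_j$'' at coincident configurations. But $\mathcal H_n$ is a closed \emph{convex} hull of one-sided gradient limits, and by the symmetry of the problem under exchanging the labels of coincident robots those one-sided limits come in pairs whose average has zero relative component. Concretely, the paper's own two-agent example on $[0,1]$ with uniform $\mathbb P_z$ (Fig.~\ref{fig: simple Lloyd vector field}) has $0\in\mathcal H_2(1/2,1/2)$, so $\{(1/2,1/2)\}\subset\mathsf D_2$ is invariant for the inclusion in the sense defined in Section~\ref{section: notations}. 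Thus the statement you want to prove is false, and the approach collapses.

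The paper sidesteps this by never working with trajectories of the \emph{inclusion} near $\mathsf D_n$. Instead it exploits the fact, already noted in the text, that the stochastic iterates $p_k$ almost surely never land on $\mathsf D_n$; hence each $p_k$ is a legitimate initial condition for the \emph{ODE} $\dot p=-\nabla\mathcal E_n(p)$. Under the domination hypothesis, the appendix shows (Proposition~\ref{prop: nonzero gradient}) that $\|\nabla\mathcal E_n\|^2\geq\kappa>0$ on a punctured $\delta_0$-neighborhood of $\mathsf D_n$, that ODE trajectories starting off $\mathsf D_n$ never reach it (Proposition~\ref{prop: finite time D intersection}), and therefore that any ODE trajectory entering $B(\mathsf D_n,\delta_0)\setminus\mathsf D_n$ must leave it within a uniform time $T$ (Corollary~\ref{eq: simple corollary exit from neighborhood of D}). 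The contradiction is then obtained by the shadowing lemma for stochastic approximations: if the limit set were contained in $\mathsf D_n$, the interpolated iterate path would stay within $\delta_0/4$ of $\mathsf D_n$ from some $k_0$ on, yet it must also stay within $\delta_0/4$ of the ODE trajectory launched from $p_k\notin\mathsf D_n$ over the horizon $T$, and that trajectory has exited $B(\mathsf D_n,\delta_0)$ by time $T$. The disconnectedness of $\mathsf C_n$ and $\mathsf D_n$ (again from Proposition~\ref{prop: nonzero gradient}) then forces $S\subset\mathsf C_n$. So the missing idea is precisely to drop the differential inclusion on $\mathsf D_n$ and use the ODE together with the fact that the random iterates avoid $\mathsf D_n$ almost surely.
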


The proof of Theorem \ref{thm: main result coverage} can be found in appendix \ref{appendix: Lloyd gradient flow}. 
The first part of the theorem is a fairly direct application of Theorem \ref{thm: standard thm ODE}, 
but does not rule out asymptotic convergence to the set $\mathsf D_n$ of aggregated configurations.
This motivates the second part of the theorem.


\subsection{Some Extensions}

Before closing this section, we briefly illustrate how the stochastic gradient view
leads to simple solutions for interesting variations of the coverage control problem.

\subsubsection{A Heterogenous Coverage Problem}	\label{section: heterogeneous coverage}

As in Subsection \ref{subsection: stochastic coverage}, 
an event appears randomly in the environment at each period and must be serviced. 
However, let us now assume that there are two types of agents, 
with $m_A$ robots of type $A$ and $m_B$ robots of type $B$, and three types of events: $a,b$, and $ab$.
 Events of type $a$ must be serviced by a robot of type $A$, events of type $B$ by a robot of type $b$, 
 and events of type $ab$ by a robot of type $A$ \emph{and} a robot of type $B$. 
When a new event appears, it is of type $\alpha$ with some unknown probability $\lambda_\alpha$, 
$\alpha \in \{a,b,ab\}$, and the agents can observe its type.
The spatial distribution $\mathbb P_\alpha$ of events of type $\alpha$ 
is also a priori unknown, and satisfies Assumption {\ref{assumption: hyperplanes}.
Finally, denote the vector of robot positions $p=[p_1^A,\ldots,p_{m_A}^A, p_1^B,\ldots,p_{m_B}^B]$.
The asymptotic configuration of the robots must now optimize the expected cost
\begin{align} \label{eq: objective heterogeneous} 
& \mathcal E_{m_A,m_B}(p) = \lambda_a E \left[ \min_{i \in [m_A]} f_A(\|p^A_i - Z\|) \Big | \alpha = a \right  ] \\
& + \lambda_b E \left[ \min_{j \in [m_B]} f_B(\|p^B_j - Z\|) \Big | \alpha = b \right ]  
 + \lambda_{ab} E \bigg[ \;  \max \bigg \{ \nonumber \\
& \min_{i \in [m_A]} f_A(\|p^A_i -Z\|) , 
 \min_{j \in [m_B]} f_B(\|p^B_j -Z\|) \bigg \} \bigg | \alpha = ab \bigg], \nonumber
\end{align}
where $f_A$ and $f_B$ are increasing, continuously differentiable functions with values in $\mathbb R_{\geq 0}$. 
Note that the cost of servicing an event of type $ab$ is the maximum of the 
costs of servicing it with one robot of each type. 
This can model the time necessary for one robot of each type to travel to the event location for example.

For this problem, one can verify as before that the stochastic gradient update rule (\ref{eq: iterations EM algo})
takes the following surprisingly simple form.  
When an event of type $a$ appears at location $z_{k+1}$, the closest robot of type $A$, say $i$, 
services it and changes it reference position by moving it toward $z_{k+1}$ 
by a (truncated and projected) step $\gamma_k f_A'(\|z_{k+1}-p^A_{i,k}\|)\frac{z_{k+1}-p^A_{i,k}}{\|z_{k+1}-p^A_{i,k}\|}$
as in (\ref{eq: update law Voronoi case}), 
and similarly for a target of type $b$ and a robot of type $B$. If the target is of type $ab$, 
the closest $A$ and $B$ robots service it. 
To update their reference positions for the next period, they first find 
which of the two is the \emph{farthest} from the event location. 
Then only this robot moves its reference position by the same step as in (\ref{eq: update law Voronoi case}). 
In view of the complicated expression of the objective function, 
such a simple rule based update law is quite appealing. 
We illustrate its behavior on Fig. \ref{fig: heterogeneous vehicle routing} 
for $f_A(x)=f_B(x)=x$.

\begin{figure}
\centering
\includegraphics[width=\linewidth]{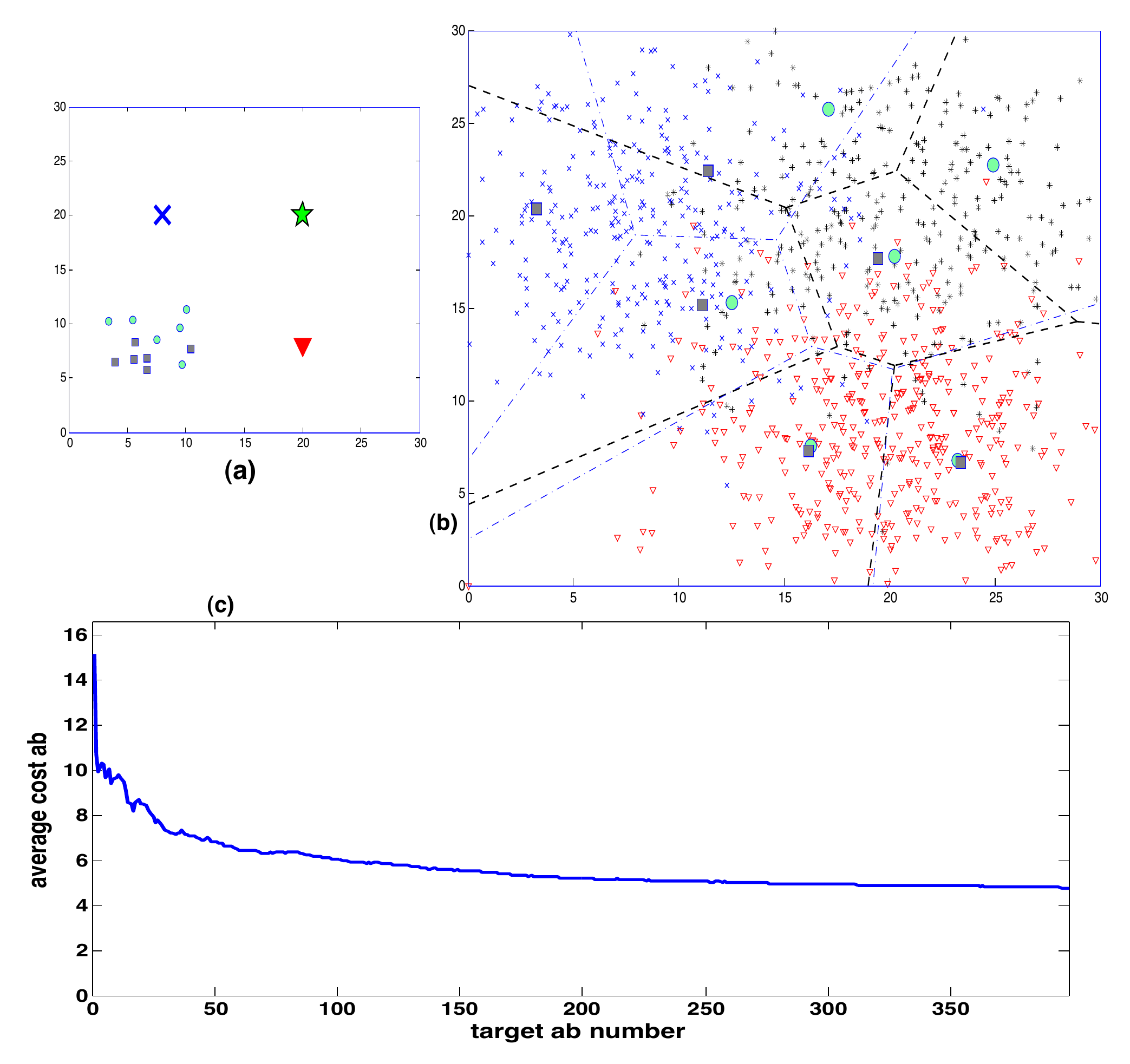}
\caption{Heterogeneous coverage control for a system with two types of robots, 
$A$ (green circles) and $B$ (gray squares).  Events requiring service from 
type $a$ appear with probability $30\%$ and a distribution approximately 
centered at $[20;20]^T$ (star on Fig. \ref{fig: heterogeneous vehicle routing}(a)). 
Targets of type $b$ appear with probability $30\%$ and a distribution approximately 
centered at $[8;20]^T$ (cross on Fig. \ref{fig: heterogeneous vehicle routing}(a)). 
Finally targets of type $ab$ appear with probability $40\%$ and a distribution 
approximately centered at $[20;8]^T$ (triangle on Fig. \ref{fig: heterogeneous vehicle routing}(a)). 
Fig. \ref{fig: heterogeneous vehicle routing}(a) shows the 
initial robot configuration and Fig. \ref{fig: heterogeneous vehicle routing}(b) the configuration reached after $1000$ targets, 
together with the history of target locations. The Voronoi cells of each robot are 
indicated but not computed by the algorithm (separate Voronoi diagrams are drawn 
for the two robot types). Note how robots of type $A$ and $B$ tend to pair in the lower right 
corner in order to service the targets of type $ab$ efficiently (here $f_A(x)=f_B(x)=x$). 
Fig. \ref{fig: heterogeneous vehicle routing}(c) shows the empirical average cost incurred by the targets of type $ab$, 
where the average is taken over all the past targets of this type seen so far.}
\label{fig: heterogeneous vehicle routing}
\end{figure}

\subsubsection{Target Tracking with Markovian Dynamics}

Suppose now that we wish to track a single target in discrete time, 
whose position at time $t_k$ is $Z_k$, where $Z_k$ evolves as 
a Markov chain with a unique stationary asymptotic distribution $\mathbb P_z$.
The  objective is still to optimize $\mathcal E_n$ defined by (\ref{eq: coverage simple general}), 
which represents the steady-state tracking error. 
We can then use algorithm (\ref{eq: update law Voronoi case}) 
to optimize the steady-state robotic network configuration, 
and a convergence result similar to Theorem \ref{thm: main result coverage} can be proven
using stochastic approximation arguments \cite[Chapter 1]{Benveniste90_SA}.
This tracking scheme does not require the knowledge of the target dynamics 
nor that of the stationary distribution $\mathbb P_z$.

As an example, consider a target moving on a circle of radius $R$, with dynamics
\[
\theta_{k+1} = 0.95 \, \theta_k + \xi_k,
\]
where the variables $\xi_k$ are iid uniform on $[-0.5,0.5]$ and $Z_k = [R \cos \theta_k, R \sin \theta_k]^T$. 
The result of the adaptive coverage algorithm for $f(x) = x^2$ is 
shown on Fig. \ref{fig: Markov dynamics on a circle}. 
Although the target distribution does not dominate the Lebesgue measure 
as required in the second part of Theorem \ref{thm: main result coverage}, 
in practice we do not observe convergence to an aggregated configuration. 
Note how the robots position themselves in the region 
around the point $[1, 0]^T$ where the target spends most of its time.

\begin{figure}[ht]
\centering
\includegraphics[width = \linewidth]{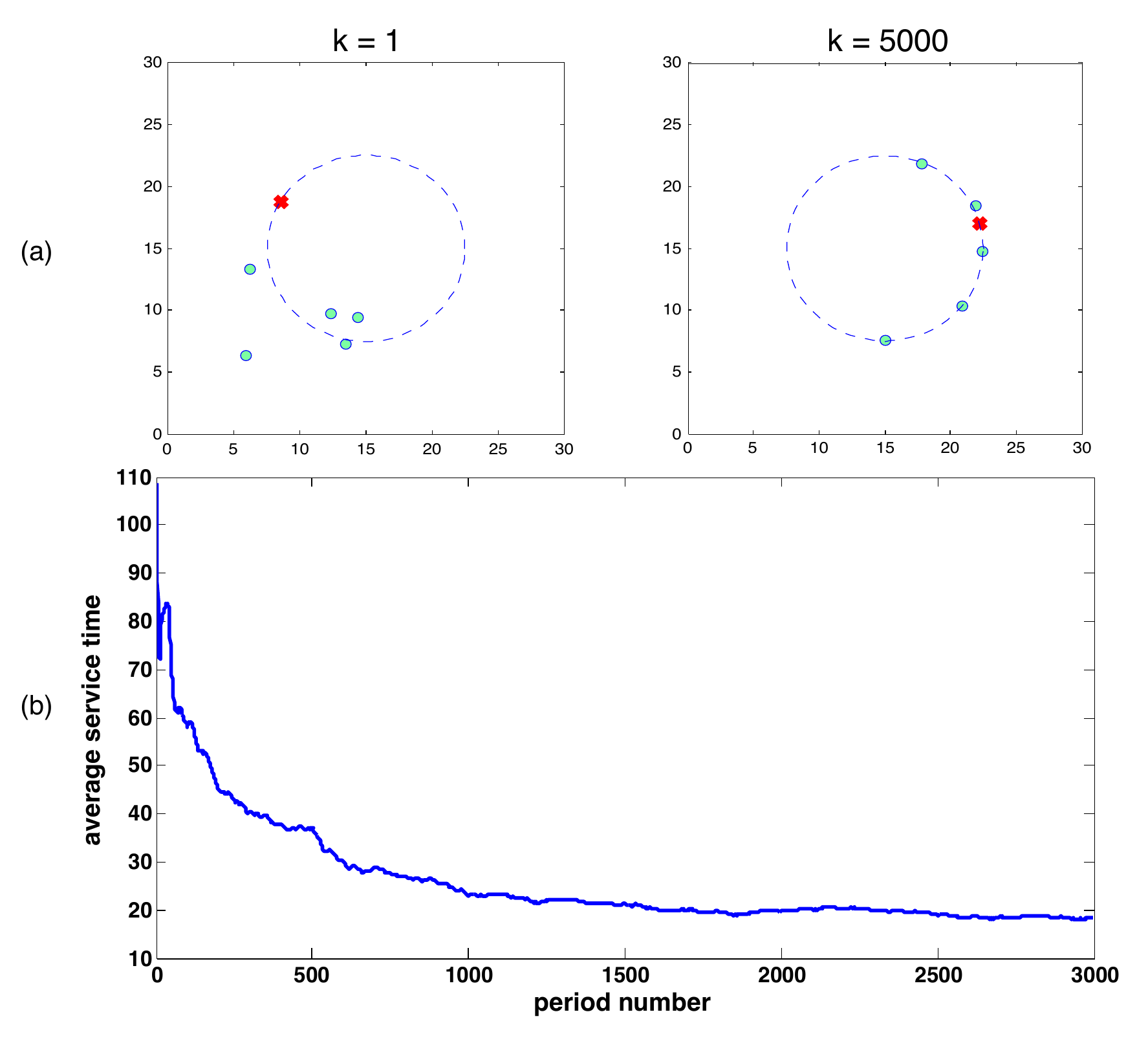}
\caption{Adaptive coverage algorithm for a target with Markovian dynamics moving on a circle. 
We show on Fig. \ref{fig: Markov dynamics on a circle}(a) the positions of the robots (blue circles) and the target (red cross) initially and after $5000$ time-steps. 
The stepsizes used were $\gamma_k = 1/(1+5 \times 10^{-3} k)$. 
The curve on Fig. \ref{fig: Markov dynamics on a circle}(b) shows the evolution of the empirical average cost over time, 
where the average is taken over the past $1000$ cost measurements.}
\label{fig: Markov dynamics on a circle}
\end{figure}


\section{Adaptive Spatial Load-Balancing and Partitioning}		
\label{section: load-balancing}

In this section, we design distributed adaptive algorithms that 
partition the space into $n$ cells, one for each robot, 
so that the steady-state probability that an event falls into cell $i$ has a prespecified value $a_i$. 
Here we have $a_i > 0$ for all $i \in [n]$, and $\sum_{i=1}^n a_i = 1$. 
These algorithms allow an operator to specify the steady state utilization 
of the different robots, by letting each robot service only the events occurring in its cell.
Such spatial load balancing algorithms have important applications in multi-robot systems 
and location optimization, 
see e.g. \cite{Aurenhammer98_clustering, Pavone09_equipartitions, Cortes10_loadBalancing}. 
An application to the DTRP is described in Section \ref{section: vehicle routing problems}.

As in Section \ref{subsection: stochastic coverage}, events occur at times $t_k$ 
and iid locations $Z_k, k \geq 1$, and the unknown distribution $\mathbb P_z$ 
has support included in $\mathsf Q$. In this section, $\mathsf Q$ is assumed to be
 compact for simplicity, but not necessarily convex. 
Based on the observation of the successive event locations, 
we design a sequence of partitions of $\mathsf Q$ into regions $\{R_{i,k}\}_{i \in [n]}$, $k \geq 0$, 
such that at period $k \geq 1$, agent $i$ is responsible for servicing the event if and only if $Z_k \in R_{i,k-1}$. 
Here we slightly abuse terminology and allow our partitions to have $R_{i,k} \cap R_{j,k} \neq \emptyset$ 
for $i \neq j$. 
Our algorithms produce regions whose intersections have $\mathbb P_z$-measure zero, 
hence this has no influence on the final result. 
After the $k^{th}$ event occurs, the agents can change the boundaries of their respective regions 
to form the partition $\{R_{i,k}\}_{i \in [n]}$ used to decide which agent services the $(k+1)^{th}$ event. 
Our sequence of partitions $\{R_{i,k}\}_{i \in [n]}$ converges 
to a partition $\{R_{i}\}_{i \in [n]}$, 
i.e., $d_H(R_{i,k},R_i) \to 0$ as $k \to \infty$, 
such that $\mathbb P_z(R_i) = a_i$ for all $i \in [n]$. 

Let $\mathcal G = \{g_1,\ldots,g_n\}$ be a set of $n$ fixed and distinct points in $\mathbb R^q$, 
with point $g_i$ associated to robot $i$. 
We call the point $g_i$ the generator of region $R_i$. 
Designing a partition $\{R_i\}_{i \in [n]}$ is equivalent to choosing 
an \emph{assignment} of event locations to region generators, 
i.e., a measurable map $T: \mathsf Q \to \mathcal G$, by taking $R_i = T^{-1}(g_i), i \in [n]$. 
Let us denote the set of all such assignments by $\mathcal T$. 
We then look for an assignment $T \in \mathcal T$ satisfying 
the constraint $\mathbb P_z(T^{-1}(g_i)) = a_i, i \in [n]$, 
and design recursive algorithms producing such an assignment asymptotically.
Now consider the following optimization problem 
\begin{align}	\label{eq: Monge problem}
\inf_{T \in \mathcal T} \quad & \int_\mathsf Q c(z,T(z)) \mathbb P_z(dz) \\
\text{subject to } \quad & \mathbb P_z(T^{-1}(g_i)) = a_i, \;\; i \in [n],		\label{eq: partitioning constraint}
\end{align}
where $c: \mathsf Q \times \mathcal G \to \mathbb R$ is a given cost function. 
The following theorem gives a general way of producing assignments or partitions 
that optimize (\ref{eq: Monge problem}), (\ref{eq: partitioning constraint}).

\begin{thm}	\label{thm: partitioning duality theorem}
Consider problem (\ref{eq: Monge problem}), (\ref{eq: partitioning constraint}), 
where $\mathsf Q$ is compact, and assume that
\begin{enumerate}[{A}1)]
\item \label{eq: assumption on the cost}For all $i \in [n]$, $z \to c(z,g_i)$ 
is lower bounded and lower semi-continuous on $\mathsf Q$, 
and $z \to \max_{i \in [n]} c(z,g_i)$ is $\mathbb P_z$-integrable.
\item \label{eq: regularity of P} For all $i \neq j \in [n]$, 
for all $r \in \mathbb R$, the set $\{z \in \mathsf Q: c(z,g_i)-c(z,g_j) = r\}$ 
has $\mathbb P_z$-measure zero.	\label{assumption: regularity of Pz}
\end{enumerate}
Then the problem admits an assignment $T \in \mathcal T$ 
that attains the infimum in (\ref{eq: Monge problem}). 
The value of the optimization problem is equal to
\begin{equation}	\label{eq: dual function maximization}
\max_{w \in \mathbb R^n} h(w) := \int_{\mathsf Q} \min_{i \in [n]} \{ c(z,g_i) - w_i \} \; \mathbb P_z (dz) 
+ \sum_{i=1}^n a_i w_i,
\end{equation}
and this maximum is attained for some $w^* \in \mathbb R^n$. 
An optimal assignment $T$ is then given by the generalized Voronoi regions
\[
\forall z \in \mathsf Q, \;\; T(z) = g_i \Leftrightarrow z \in V^c_i(\mathcal G,w^*).
\]
The function $h$ is concave, and a supergradient of $h$ at $w$ is given by 
\begin{equation}	\label{eq: supergradient for generalized Voronoi}
[- \mathbb P_z(V^c_1(\mathcal G,w)) + a_1, \ldots, - \mathbb P_z(V^c_n(\mathcal G,w)) + a_n]^T.
\end{equation}
Finally, the following supergradient optimization algorithm
\begin{align}
& w_0 = 0, \nonumber \\
& w_{i, k+1} = w_{i, k} + \gamma_k [- \mathbb P_z(V^c_i(\mathcal G,w_k)) + a_i], 
\; i=1,\ldots,N, \label{eq: standard gradient descent}
\end{align}
where $\gamma_k$ is a sequence of stepsizes satisfying (\ref{eq: classical stepsize condition}), 
converges to an optimal set of weights maximizing $h$.
\end{thm}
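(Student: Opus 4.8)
The plan is to read (\ref{eq: Monge problem})--(\ref{eq: partitioning constraint}) as a semi-discrete Monge--Kantorovich transportation problem, in which the continuous measure $\mathbb{P}_z$ on $\mathsf{Q}$ is transported onto the atomic measure $\nu:=\sum_{i=1}^n a_i\,\delta_{g_i}$ with ground cost $c$, and to deduce everything from Kantorovich duality (see \cite{Rachev98_optimalTransport}). First I would relax the assignment problem to the Kantorovich problem over couplings $\pi$ of $\mathbb{P}_z$ and $\nu$. Compactness of $\mathsf{Q}$ together with A\ref{eq: assumption on the cost} (lower semicontinuity and lower boundedness of each $z\mapsto c(z,g_i)$, and $\mathbb{P}_z$-integrability of $z\mapsto\max_i c(z,g_i)$) makes $\pi\mapsto\int c\,d\pi$ lower semicontinuous with a finite integrable majorant on the weak-$*$ compact set of couplings with fixed marginals, so an optimal coupling $\pi^*$ exists and strong duality holds with no gap. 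Because $\nu$ is atomic, the Kantorovich dual potential on the $\mathcal{G}$-side is just a vector $w\in\mathbb{R}^n$; optimizing out the $\mathsf{Q}$-side potential (the $c$-transform) replaces it by $z\mapsto\min_i\{c(z,g_i)-w_i\}$, which identifies the common value of the primal and dual problems with $\sup_{w\in\mathbb{R}^n}h(w)$ for $h$ as in (\ref{eq: dual function maximization}).

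Concavity of $h$ is then immediate: for each $z$, $w\mapsto\min_i\{c(z,g_i)-w_i\}$ is a pointwise minimum of affine functions, hence concave, and concavity survives integration against $\mathbb{P}_z$ and addition of the linear term $\sum_i a_i w_i$. The supergradient formula (\ref{eq: supergradient for generalized Voronoi}) follows from the elementary rule that for $w\mapsto\min_i\ell_i(w)$ the gradient of any active affine piece is a supergradient, combined with an interchange of supergradient and integral: by A\ref{assumption: regularity of Pz} the minimizing index $i(z)$ in $\min_i\{c(z,g_i)-w_i\}$ is unique for $\mathbb{P}_z$-a.e.\ $z$, the per-$z$ supergradient is $-e_{i(z)}$, and $\int(-e_{i(z)})\,\mathbb{P}_z(dz)=-(\mathbb{P}_z(V^c_1(\mathcal{G},w)),\ldots,\mathbb{P}_z(V^c_n(\mathcal{G},w)))^T$; adding the gradient of the linear part gives (\ref{eq: supergradient for generalized Voronoi}). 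In fact A\ref{assumption: regularity of Pz} also makes each $w\mapsto\mathbb{P}_z(V^c_i(\mathcal{G},w))$ continuous, so $h$ is $C^1$ and (\ref{eq: supergradient for generalized Voronoi}) is its gradient.

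To see that the supremum is attained, note $h(w+t\mathbf 1)=h(w)$ for all $t$ (because $\sum_i a_i=1$), so $h$ is constant along $\mathbf 1$ and I would restrict to the hyperplane $W_0:=\{w:\sum_i w_i=0\}$. A direct computation of the recession behavior gives, for $v\in W_0$, $\lim_{s\to\infty}h(w_0+sv)/s=\sum_i a_i v_i-\max_i v_i$, which is \emph{strictly} negative for every $v\in W_0\setminus\{0\}$: since every $a_i>0$ and $\sum_i a_i=1$, the strict convex combination $\sum_i a_i v_i$ cannot equal $\max_i v_i$ unless $v$ is constant, i.e.\ $v=0$. By concavity this recession behavior forces $h$ to be coercive on $W_0$, so it attains its maximum at some $w^*$ (and the value equals $\sup_{\mathbb R^n}h$). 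Establishing this coercivity cleanly --- while keeping track of the fact that $h$ is only defined up to translation by $\mathbf 1$ --- is the step I expect to require the most care, since it is exactly here that the hypothesis $a_i>0$ for all $i$ enters.

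Finally, I would recover an optimal assignment by complementary slackness: an optimal coupling $\pi^*$ and an optimal dual $w^*$ satisfy $c(z,g_i)-w^*_i=\min_j\{c(z,g_j)-w^*_j\}$ for $\pi^*$-a.e.\ $(z,g_i)$, i.e.\ $\pi^*$ is concentrated on $\bigcup_i\big(V^c_i(\mathcal{G},w^*)\times\{g_i\}\big)$. By A\ref{assumption: regularity of Pz} the regions $V^c_i(\mathcal{G},w^*)$ overlap only on a $\mathbb{P}_z$-null set, so $\pi^*$ must be the coupling induced by $T(z)=g_i$ on $V^c_i(\mathcal{G},w^*)$; its $\mathcal{G}$-marginal being $\nu$ then forces $\mathbb{P}_z(T^{-1}(g_i))=\mathbb{P}_z(V^c_i(\mathcal{G},w^*))=a_i$, so $T$ is feasible for (\ref{eq: partitioning constraint}), and since $T$ realizes the Kantorovich value it is optimal for the more constrained Monge problem, which shows in particular that the Monge value equals $\max_w h(w)$ and is attained. (Feasibility of (\ref{eq: partitioning constraint}) itself is not an issue, because A\ref{assumption: regularity of Pz} forces $\mathbb{P}_z$ to be non-atomic and a non-atomic probability measure can be split into Borel pieces of prescribed masses $a_1,\dots,a_n$.) For the last claim, observe that (\ref{eq: standard gradient descent}) starts at $w_0=0\in W_0$ and that the supergradient (\ref{eq: supergradient for generalized Voronoi}) has zero coordinate sum (the $V^c_i$ form a $\mathbb{P}_z$-a.e.\ partition of $\mathsf{Q}$) and entries in $[-1,1]$; hence it is an exact, uniformly bounded subgradient scheme for the convex, coercive function $-h$ on $W_0$, and Theorem \ref{thm: standard thm ODE - convex case} (applied with no noise, $D_{k+1}\equiv 0$, and stepsizes satisfying (\ref{eq: classical stepsize condition})) yields convergence of $\{w_k\}$ to a maximizer of $h$.
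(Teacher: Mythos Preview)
Your proposal is correct and follows the same Kantorovich-duality route as the paper's proof. The paper is terser at two junctures where you are self-contained: it cites \cite[Theorem 3]{Cuesta93_discreteOptimalTransport} for the fact that the optimal Kantorovich coupling is induced by a map under A\ref{assumption: regularity of Pz} (you argue this directly via complementary slackness and the $\mathbb P_z$-null overlaps of the $V^c_i$), and it cites \cite[Theorem 2.3.12]{Rachev98_optimalTransport} for attainment of the dual supremum (you give a direct coercivity argument on $W_0=\{w:\sum_i w_i=0\}$ via the recession computation, which has the virtue of making explicit where the hypothesis $a_i>0$ enters). Conversely, the paper establishes the supergradient formula by the one-line inequality $h(w^2)-h(w^1)\le\sum_i(a_i-\mathbb P_z(V^c_i(\mathcal G,w^1)))(w^2_i-w^1_i)$, which is slightly more elementary than your interchange-of-supergradient-and-integral argument and does not need the auxiliary $C^1$ claim. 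For the convergence of (\ref{eq: standard gradient descent}) the paper simply cites \cite[Proposition 8.2.6]{Bertsekas03_convexBook}; your reduction to Theorem~\ref{thm: standard thm ODE - convex case} with $D_{k+1}\equiv 0$, using that the supergradient has zero coordinate sum so the iterates stay in $W_0$, is an equally valid route.
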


In other words, there is a set of weights $w^* \in \mathbb R^n$, 
maximizing the dual function defined in (\ref{eq: dual function maximization}), 
for which the corresponding generalized Voronoi cells $\{V^c_i(\mathcal G,w^*)\}_{i \in [n]}$ 
defined in (\ref{eq: generalized Voronoi})
satisfy the constraints of interest (\ref{eq: partitioning constraint}). 
In addition, the assignment corresponding to these regions minimizes 
the expected cost (\ref{eq: Monge problem}). 
In practice, we make additional assumptions on the function $c$ 
to obtain reasonably shaped regions. In particular, if $c(z,g_i) = \|z-g_i\|^2$, 
then the generalized Voronoi diagram becomes a power diagram.
Because the boundaries of the power cells are hyperplanes in $\mathbb R^q$  \cite{Okabe2000_Voronoi}, 
our Assumption A\ref{eq: regularity of P} on $\mathbb P_z$ in Theorem \ref{thm: partitioning duality theorem} 
is satisfied under Assumption \ref{assumption: hyperplanes}.
Theorem \ref{thm: partitioning duality theorem} generalizes some results 
in \cite{Aurenhammer98_clustering, Pavone09_equipartitions, Cortes10_loadBalancing} 
by imposing weaker conditions on $\mathbb P_z$ and $c$.  
A proof is provided in Appendix \ref{appendix: optimal transport}, 
based on results from 
optimal transportation \cite{Rachev98_optimalTransport, Gangbo96_optimalTransport, Ruschendorf97_coupling}.

For our scenario where $\mathbb P_z$ is unknown, 
we replace the gradient ascent algorithm (\ref{eq: standard gradient descent})
by a stochastic version presented in Algorithm \ref{algo: adaptive partitioning}, 
and whose behavior is illustrated on Fig. \ref{fig: partitioning}.
For simplicity, we specialize the discussion to $c(z,g_i) = f(\|z-g_i\|)$, 
where $f$ is increasing, and denote the generalized Voronoi cells by $V^f_i(\mathcal G,w)$.
If, at period $k$, the event is located at $Z_k$, a possible choice 
for the stochastic supergradient is simply
\begin{equation}	\label{eq: stochastic gradient partitioning}
[-\mathbf 1_{\{V_1^f(\mathcal G,w_{k-1})\}}(Z_k) + a_1, \ldots, 
-\mathbf 1_{\{V_n^f(\mathcal G,w_{k-1})\}}(Z_k) + a_n]^T.
\end{equation}
Computing component $i$ of (\ref{eq: stochastic gradient partitioning}) relies on 
testing if $Z_k \in V_i^f(\mathcal G,,w_{k-1})$, which is much easier than 
computing the $\mathbb P_z$-area of the generalized Voronoi cell as 
in (\ref{eq: standard gradient descent}).
For this test, assuming that at least the robot associated with the region $R_{i,k-1}$ 
where the $k^{th}$ event occurs detects the event, the agents can simply 
run the \texttt{FloodMin} algorithm (see Subsection \ref{section: min-consensus}) 
with $\hat d_i = f(\|Z_k - g_i\|) - w_{i,k-1}$ (and $\hat d_i = +\infty$ if agent $i$ did not detect the event).
The following result is now a direct application of Theorem \ref{thm: standard thm ODE - convex case}.
\begin{thm}		\label{thm: load balancing convergence}
Assume that the stepsizes $\gamma_k$ in Algorithm \ref{algo: adaptive partitioning} 
satisfy (\ref{eq: classical stepsize condition}), and that Assumptions A\ref{eq: assumption on the cost}, 
A\ref{eq: regularity of P} of Theorem \ref{thm: partitioning duality theorem} 
are satisfied for $c(z,g_i) = f(\|z-g_i\|)$. 
Then the weights updated by following Algorithm \ref{algo: adaptive partitioning} 
converge almost surely to a maximizer $w^*$ of (\ref{eq: dual function maximization}), 
and the resulting generalized Voronoi diagram $\{V^c_i(\mathcal G,w^*)\}_{i \in [n]}$ 
satisfies the utilization constraints (\ref{eq: partitioning constraint}).
\end{thm}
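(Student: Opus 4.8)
The plan is to verify that Algorithm \ref{algo: adaptive partitioning} is exactly an instance of the stochastic subgradient recurrence (\ref{eq: gradient SA general}) applied to the concave maximization problem (\ref{eq: dual function maximization}), and then to invoke Theorem \ref{thm: standard thm ODE - convex case} (in its ``maximization'' form, applied to $-h$). First I would set $G := -h$, which by Theorem \ref{thm: partitioning duality theorem} is convex on $\mathbb R^n$ and attains its minimum (equivalently $h$ is concave and attains its maximum at some $w^*$); this discharges the convexity hypothesis of Theorem \ref{thm: standard thm ODE - convex case}. The step direction used in Algorithm \ref{algo: adaptive partitioning} is the vector (\ref{eq: stochastic gradient partitioning}), which I write as $h_k + D_{k+1}$ with $h_k$ the true supergradient (\ref{eq: supergradient for generalized Voronoi}) of $h$ at $w_{k-1}$ and $D_{k+1}$ the zero-mean fluctuation. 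The key identity is that, conditioned on $w_{k-1}$ (hence on $\mathcal F_k$), the expectation of $-\mathbf 1_{\{V_i^f(\mathcal G,w_{k-1})\}}(Z_k) + a_i$ over the draw of $Z_k \sim \mathbb P_z$ is precisely $-\mathbb P_z(V_i^f(\mathcal G,w_{k-1})) + a_i$, which is component $i$ of the supergradient (\ref{eq: supergradient for generalized Voronoi}). Thus $\{D_k\}$ is a martingale difference sequence with respect to $\mathcal F_k$, as required.

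Next I would check the boundedness hypothesis $E[\|h_k + D_{k+1}\|^2 \mid \mathcal F_k] \le K$. This is immediate and is the cleanest part of the argument: each component of the stochastic supergradient (\ref{eq: stochastic gradient partitioning}) is of the form $-\mathbf 1_{A}(Z_k) + a_i$ with $a_i \in (0,1)$ and the indicator in $\{0,1\}$, hence lies in $[-1,1]$, so the Euclidean norm of the whole vector is bounded deterministically by $\sqrt{n}$, and we may take $K = n$. The stepsize condition (\ref{eq: classical stepsize condition}) is assumed in the statement. Here I should also note the measurability point: for the algorithm to be well-defined one needs $z \mapsto \mathbf 1_{V_i^f(\mathcal G,w)}(z)$ to be measurable, which holds because the generalized Voronoi cells (\ref{eq: generalized Voronoi}) are defined by finitely many inequalities between the continuous functions $z\mapsto c(z,g_j)$ under Assumption A\ref{eq: assumption on the cost} (lower semicontinuity suffices for Borel measurability of the cells).

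Applying Theorem \ref{thm: standard thm ODE - convex case} to $G = -h$ then yields that $w_k$ converges almost surely to some minimizer of $-h$, i.e. to some maximizer $w^*$ of $h$. The final assertion — that the generalized Voronoi diagram $\{V_i^c(\mathcal G,w^*)\}_{i\in[n]}$ satisfies the utilization constraints (\ref{eq: partitioning constraint}) — is not a consequence of the stochastic approximation machinery but is exactly what Theorem \ref{thm: partitioning duality theorem} asserts about any maximizer $w^*$ of $h$ under Assumptions A\ref{eq: assumption on the cost}, A\ref{eq: regularity of P}, which are assumed here for $c(z,g_i)=f(\|z-g_i\|)$; so I would simply quote that part of Theorem \ref{thm: partitioning duality theorem} to close the proof. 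There is essentially no hard step: the only place requiring a little care is confirming that $w_k$ lands on a maximizer rather than merely wandering among them — but since the theorem's conclusion only claims convergence to \emph{a} maximizer $w^*$ (and every maximizer satisfies (\ref{eq: partitioning constraint}) by Theorem \ref{thm: partitioning duality theorem}), the possible non-uniqueness of $w^*$ causes no difficulty. A remark worth inserting is that $h$ need not be strictly concave (the cost function could make some cell empty for a range of weights), so the limiting $w^*$ is in general random and non-unique, yet the induced partition still meets the specification.
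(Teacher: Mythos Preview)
Your proposal is correct and takes essentially the same approach as the paper, which states just before the theorem that ``the following result is now a direct application of Theorem \ref{thm: standard thm ODE - convex case}'' and gives no further proof. Your write-up simply fills in the verification of the hypotheses (convexity of $-h$ and existence of a minimizer via Theorem \ref{thm: partitioning duality theorem}, the MDS structure of the stochastic supergradient (\ref{eq: stochastic gradient partitioning}), the deterministic bound $\|h_k+D_{k+1}\|\le \sqrt{n}$), exactly as intended.
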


\begin{algorithm}
\caption{Adaptive partitioning algorithm}
\label{algo: adaptive partitioning}
\begin{algorithmic}
\REQUIRE for robot $i$: its desired utilization rate $a_i$, and the function $f$ such that $c(z,g_i) = f( \|z-g_i\|)$ in (\ref{eq: Monge problem}).
\STATE \textbf{Initialization}: for $i \in [n]$, $w_{i} \gets 0$.
\STATE When event $k \geq 1$ appears at location $Z_k$:
\begin{itemize}
\STATE Run the \texttt{FloodMin} algorithm starting with $\hat d_j = f(\|Z_k - g_j\|) - w_{j}, j \in [n]$. 
\IF {robot $i$ terminates  \texttt{FloodMin} with $\mathtt{d_i} = \hat d_i$}
\STATE {$w_{i} \gets w_{i} + \gamma_{k-1} (a_i - 1)$}
\ELSE 
\STATE $w_{i} \leftarrow w_i + \gamma_{k-1} a_i$.
\ENDIF
\end{itemize}
\end{algorithmic}
\end{algorithm}

\begin{figure}[ht]
\centering
\includegraphics[width=\linewidth]{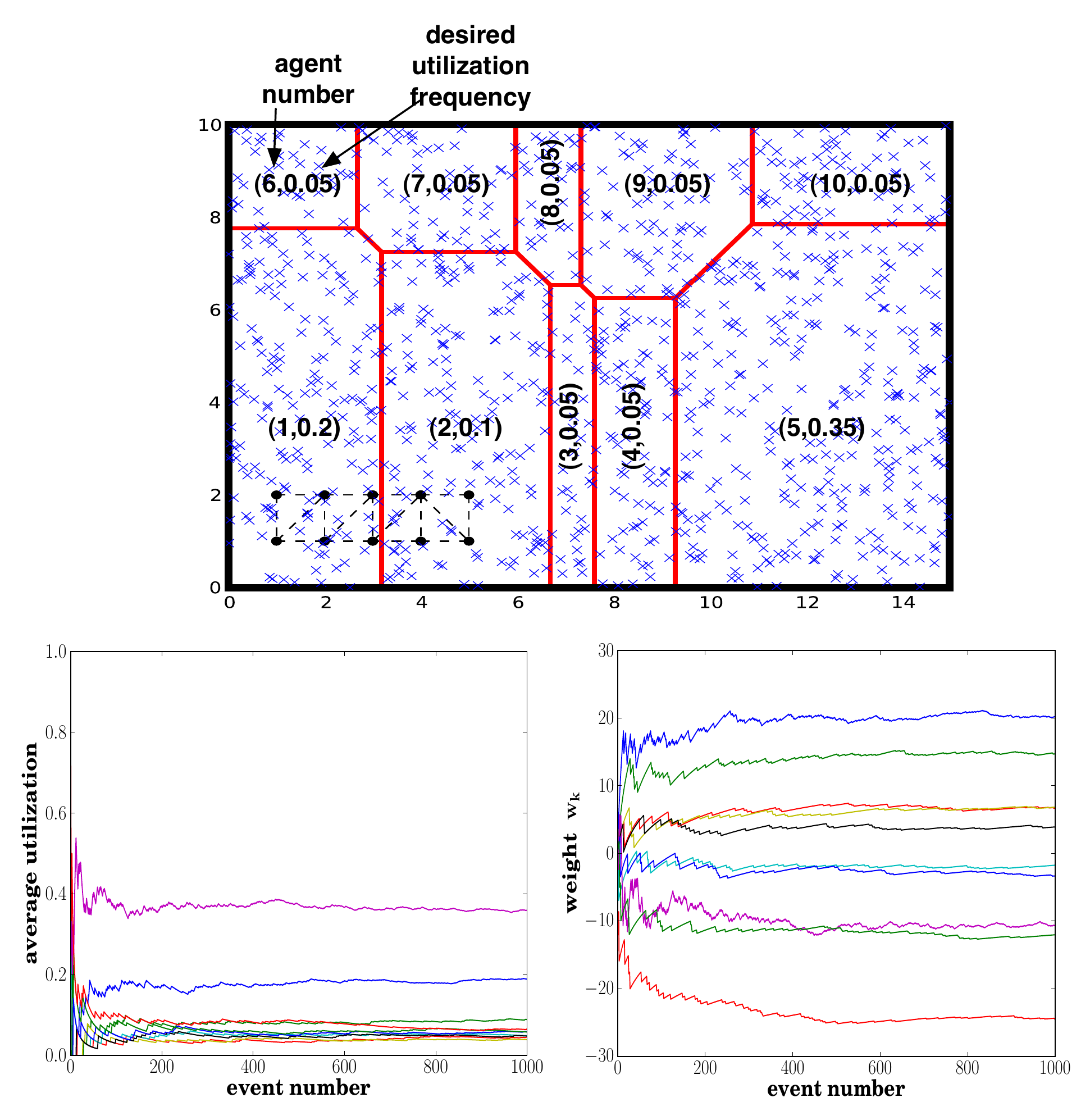}
\caption{Partition for $10$ robots after $1000$ events for the 
quadratic cost $c(z,g_i)=\|z-g_i\|^2$. The partition at each step is a power diagram. 
The desired utilization rates are shown for each agent on the figure. 
The power diagram generators used are represented as black dots in the lower left corner. 
Note that fixing their positions determines the orientation of the cell boundaries \cite{Aurenhammer91_VoronoiSurvey}. 
The power cells shown in red are computed using CGAL \cite{cgal}, 
but need not be computed by the agents running Algorithm \ref{algo: adaptive partitioning}. 
The bottom left figure shows the evolution of the empirical utilization frequencies over the first $1000$ events, 
and the bottom right figure the evolution of the weight vector $w_k$. 
The chosen stepsizes were $\gamma_k = 10/(1+0.01 k)$.}
\label{fig: partitioning}
\end{figure}


\section{Adaptive Dynamic Vehicle Routing}	\label{section: vehicle routing problems}

We now combine the algorithms of Section \ref{subsection: stochastic coverage} 
and Section \ref{section: load-balancing} to design an adaptive algorithm for 
the Dynamic Traveling Repairman Problem (DTRP). 
Assume for simplicity in this section that the environment is planar, i.e., $q=2$. 
In the DTRP \cite{Bertsimas91_DTRP}, events appear in the workspace $\mathsf Q$ 
according to a space-time Poisson process with rate $\lambda$ and spatial distribution $\mathbb P_z$. 
We assume as in Section \ref{subsection: stochastic coverage}  that $\mathsf Q$ is convex and compact.
When the $k^{th}$ event appears at time $t_k$, a robot needs to 
travel to its location $Z_k$ to service it. 
The robots travel at velocity $v$ according to the kinematic model (\ref{eq: trivial dynamics continuous time}).
The time that the $k^{th}$ event spends waiting for a robot to arrive at its location 
is denoted $W_k$. The robot then spends a random service time $S_k$ at the event location, 
where the variables $S_k$ are iid with finite first and second moments $\bar s, \overline {s^2}$. 
The system time of event $k$ is defined as $\Sigma_k = W_k + S_k, k \geq 1$.
The goal is to design policies for the robots that minimize the steady-state system time 
of the events $\overline \Sigma = \lim \sup_{k \to \infty} E[\Sigma_k]$. 
Let $\rho = \lambda \bar s / n$ denote the load factor, i.e., the average fraction of time a robot 
spends in on-site service \cite{Bertsimas93_DTRP}. 
Policies for the DTRP are usually analyzed in two limiting regimes, 
namely in light traffic conditions ($\lambda \to 0^+$) and heavy traffic conditions ($\rho \to 1^-$).

The policies for the DTRP initially proposed in 
\cite{Bertsimas91_DTRP, Bertsimas93_multiDTRP, Bertsimas93_DTRP} 
require the knowledge of the event distribution $\mathbb P_z$. 
The recent references \cite{Arsie09_routing, Pavone10_adaptiveDTRP} propose 
algorithms for the DTRP that work without this knowledge in the light traffic regime,
 but leave open the adaptive problem in heavy traffic.
The following sections make two contributions to the DTRP. 
First, in the light traffic case, we use the adaptive coverage control algorithm 
of Section \ref{subsection: stochastic coverage} to obtain an adaptive policy that is 
simpler than the solutions proposed so far \cite{Arsie09_routing, Pavone10_adaptiveDTRP} 
and provides the same convergence guarantees.
Second, for the heavy traffic case, we present the first fully adaptive policy
for the DTRP that provably stabilizes the system as long as it is stabilizable, 
in the absence of knowledge of $\mathbb P_z$. 
This policy relies on the adaptive partitioning algorithm of Section \ref{section: load-balancing}.

\subsection{Light Traffic Regime} 	\label{section: light traffic optimal policy}
Note first that we always have \cite{Bertsimas93_multiDTRP}
\begin{equation}	\label{eq: light traffic lower bound}
\overline \Sigma \geq \min_{p} \mathcal E_n(p) + \bar s,
\end{equation}
where $\mathcal E_n(p)$ is defined by (\ref{eq: coverage simple general}) for $f(x) = x/v$. 
This bound is tight in light traffic conditions \cite{Bertsimas91_DTRP, Bertsimas93_DTRP}, 
and achieved by the following policy. Let $p^*=[p_1^*,\ldots,p_n^*] \in \mathsf Q^{n}$ 
denote a global minimizer of $\mathcal E_n$, called a \emph{multi-median configuration}. 
In the absence of events, vehicle $i$ waits at the reference position $p_i^*$. 
When an event occurs, the agent whose reference position is closest to the event location services it. 
It then travels back to its reference position $p_i^*$.
As $\lambda \to 0^+$,  
the agents are at their 
reference configuration $p^*$ when a new event occurs, 
and this policy achieves the bound (\ref{eq: light traffic lower bound}) \cite{Bertsimas93_DTRP}.

To obtain an adaptive version of the above policy, we can use the
coverage control algorithm of Section \ref{subsection: stochastic coverage} 
to find a local minimizer of $\mathcal E_n$.
In the absence of an event, each robot waits at its 
current reference position $p_{i,k}$. When the $k^{th}$ event occurs at $Z_{k}$, 
the robot whose current reference position is closest to $Z_k$, say robot $j$, 
services the event, updates its reference position 
to $p_{j,k} = \Pi_\mathsf Q \left[ p_{j,k-1} + \gamma_k \frac{1}{v} \frac{Z_{k}-p_{j,k-1}}{\|Z_{k}-p_{j,k-1}\|}\right]$, 
and travels back toward $p_{j,k}$. 
Reasoning as in \cite{Bertsimas91_DTRP, Bertsimas93_DTRP},  
in the light traffic case where $\lambda \to 0$, the agents are 
at their reference positions when a new event occurs.
Hence the resulting policy achieves a steady-state system time 
of $\mathcal E_n(\hat p) + \bar s$, where $\hat p$ is a critical point of $\mathcal E_n$ 
to which the stochastic gradient algorithm (\ref{eq: update law Voronoi case}) 
converges under the assumptions of Theorem \ref{thm: main result coverage}.
For $n=1$, it achieves the minimum system time since $\mathcal E_1$ is convex. 
A similar guarantee is provided by the adaptive light traffic policy described 
in \cite{Arsie09_routing}, at the expense of a significantly more complex algorithm
where the robots keep track of all past locations visited.
Note that these policies turn out to be unstable as the load factor 
$\rho$ increases \cite{Bertsimas93_DTRP}, which motivates the heavy traffic
policy of the next section.

\subsection{A Stabilizing Adaptive Policy}

Policies adequate for the heavy-traffic regime but requiring $\mathbb P_z$ to be known 
are described in, e.g., \cite{Bertsimas93_DTRP, Xu95_thesis, Papastavrou96_DTRP, Pavone10_adaptiveDTRP}. 
The following non-adaptive policy, although not the best available, 
stabilizes the system in heavy-traffic, i.e., as $\rho \to 1^-$ \cite{Xu95_thesis, Papastavrou96_DTRP, Pavone10_adaptiveDTRP}.
We partition the workspace $\mathsf Q$ into $n$ regions $\{R_i\}_{i \in [n]}$ 
such that $\mathbb P_z(R_i) = 1/n, i \in [n]$. 
Robot $i$ only services the events occurring in region $R_i$. 
It does so by forming successive traveling salesman tours (TSP tours) 
through the event locations falling in this region, and servicing the events in the order of the tours. 
Recall that a TSP tour through a set of points is the shortest (here, for the Euclidean distance) 
closed tour visiting each point in the set once.
While robot $i$ services the events in a given tour, new events can occur in region $R_i$ 
and are backlogged by the robot. Once a tour is finished, the robot forms a new tour 
through the backlogged events and starts servicing them. 
Assuming that $\mathbb P_z$ has a density $\phi_z$, 
it is known that this policy achieves the following bounds on the system time 
in heavy-traffic \cite[theorems 4.2, 6.4]{Pavone10_adaptiveDTRP}
\begin{align}	
&\frac{C^*}{n^2} \leq \lim_{\rho \to 1^-} (1-\rho)^2 \overline \Sigma \leq \frac{2 C^*}{n}, 
\label{eq: system time bounds} \\
&\text{where } C^* = C \frac{\lambda \Big ( \int_{\mathsf Q} \phi_z(z)^{1/2} dz \Big )^2}{v^2} 
\text{ and } C \approx 0.253.	\nonumber
\end{align}
In addition, the right-hand side of (\ref{eq: system time bounds}) 
can be changed to $2C^*/n^2$ if $\mathbb P_z$ is the 
uniform distribution on $\mathsf Q$ \cite{Pavone10_adaptiveDTRP}.
Now consider the adaptive version of this policy described in Algorithm \ref{algo: adaptive DTRP},
which partitions the workspace as in Section \ref{section: load-balancing}, and works without
the knowledge of any event process parameter 
such as $\lambda$ or $\mathbb P_z$.
\begin{thm}
The adaptive policy of Algorithm \ref{algo: adaptive DTRP}
stabilizes the system as long as $\rho < 1$ and
achieves a steady-state system time 
satisfying the heavy traffic performance bound (\ref{eq: system time bounds}). 
Moreover if $n=1$, this adaptive policy is also optimal in the light traffic regime $\lambda \to 0^+$.
\end{thm}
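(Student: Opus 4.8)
\emph{Proof proposal.} The plan is to reduce the statement to three ingredients already available in the excerpt: the almost sure convergence of the adaptive partitioning scheme (Theorem~\ref{thm: load balancing convergence}) applied with the uniform utilization targets $a_i = 1/n$; the classical heavy-traffic analysis of the non-adaptive TSP-tour policy whose performance is recalled in \eqref{eq: system time bounds}; and, for $n=1$, the convergence of the adaptive coverage update \eqref{eq: update law Voronoi case} to the minimizer of the convex objective $\mathcal E_1$, together with the light-traffic lower bound \eqref{eq: light traffic lower bound}.

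First I would apply Theorem~\ref{thm: load balancing convergence} to the partitioning subroutine inside Algorithm~\ref{algo: adaptive DTRP}, taken with $c(z,g_i) = \|z-g_i\|^2$ and $a_i = 1/n$. Since $\mathbb P_z$ has a density, the power-cell boundaries (hyperplanes) have $\mathbb P_z$-measure zero, so Assumptions A1) and A2) of Theorem~\ref{thm: partitioning duality theorem} hold; hence the weights converge almost surely to a maximizer $w^*$ of \eqref{eq: dual function maximization}, and the limiting power diagram $\{R_i\}_{i\in[n]} := \{V^c_i(\mathcal G, w^*)\}_{i\in[n]}$ satisfies $\mathbb P_z(R_i) = 1/n$ for every $i$. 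An important point is that this limiting partition does not depend on the traffic intensity $\lambda$, which will let the heavy-traffic limit be taken cleanly.

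Next I would fix a sample path in the probability-one event that the weights converge, and run a quasi-static (two-timescale) argument. Because $\gamma_k \to 0$, for every $\varepsilon > 0$ there is a path-dependent index $K$ beyond which $R_{i,k}$ lies within Hausdorff distance $\varepsilon$ of $R_i$, so the rate of events routed to robot $i$ is at most $\lambda(1/n+\varepsilon)$; the finite transient before $K$ leaves only a finite backlog and affects neither stability nor the $\limsup$ system time. For $k \ge K$ each robot is then running a single-vehicle TSP-tour policy on a bounded region fed by a space-time Poisson process of rate $\le \lambda(1/n+\varepsilon)$; the Beardwood--Halton--Hammersley / Bertsimas--van Ryzin estimate that a tour through $m$ backlogged points has length $O(\sqrt m)$ makes the per-customer travel time vanish as the backlog grows while the per-customer on-site time stays $\bar s$, so a Foster--Lyapunov drift argument on the number of outstanding events yields positive recurrence whenever $\lambda(1/n+\varepsilon)\bar s < 1$; since $\rho = \lambda\bar s/n < 1$ this holds for small $\varepsilon$, giving stability. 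The same tour estimates, taken now in the regime $\rho \to 1^-$, are exactly what produce \eqref{eq: system time bounds}: applying the non-adaptive analysis of \cite{Xu95_thesis, Papastavrou96_DTRP, Pavone10_adaptiveDTRP} to the $n$ decoupled subsystems attached to the fixed limiting partition $\{R_i\}$ (subsystem $i$ carrying $\mathbb P_z$-mass $1/n$ and conditional density $n\phi_z\mathbf 1_{R_i}$), then combining via $\overline\Sigma = \max_i \overline\Sigma_i$ and the elementary inequality $\big(\sum_i \int_{R_i}\phi_z^{1/2}\big)^2 \le n\sum_i \big(\int_{R_i}\phi_z^{1/2}\big)^2$, recovers the upper bound $2C^*/n$, while the lower bound $C^*/n^2$ holds for any policy; since \eqref{eq: system time bounds} concerns the $\rho\to 1^-$ asymptotics of a $\limsup$, the $O(\varepsilon)$ discrepancy between $R_{i,k}$ and $R_i$ washes out. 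For the $n=1$ light-traffic claim, Algorithm~\ref{algo: adaptive DTRP} reduces to the policy of Section~\ref{section: light traffic optimal policy}, i.e., the update \eqref{eq: update law Voronoi case} with $f(x) = x/v$ and $n=1$; by Theorem~\ref{thm: main result coverage} the single reference position converges almost surely to a critical point of $\mathcal E_1$, which is the global minimizer by convexity, and, arguing as in \cite{Bertsimas91_DTRP, Bertsimas93_DTRP}, the vehicle sits at its reference point whenever an event occurs as $\lambda \to 0^+$, so the achieved system time $\min_p \mathcal E_1(p) + \bar s$ is optimal by \eqref{eq: light traffic lower bound}.

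The hard part will be making the second step fully rigorous: legitimately decoupling the slowly converging partition from the fast per-robot queueing dynamics, and justifying the exchange of limits (first $k \to \infty$ along a converged sample path, then $\rho \to 1^-$) so that the heavy-traffic bounds known for the \emph{fixed} optimal partition transfer verbatim to the adaptive scheme. The cleanest route is to freeze the partition at $\{R_i\}$, verify \eqref{eq: system time bounds} there, and then control the resulting perturbation uniformly in $\rho$ using the $\varepsilon$-closeness obtained in the first step.
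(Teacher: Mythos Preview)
Your proposal is correct and follows essentially the same route as the paper: invoke Theorem~\ref{thm: load balancing convergence} to get almost-sure convergence of the power diagram to an equitable partition $\{R_i\}$ with $\mathbb P_z(R_i)=1/n$, argue that in steady state the policy then coincides with the non-adaptive TSP-tour policy and hence inherits \eqref{eq: system time bounds}, and for $n=1$ in light traffic use convexity of $\mathcal E_1$ together with \eqref{eq: light traffic lower bound}. The paper's own proof is considerably terser---it simply asserts that the adaptive policy ``behaves in steady-state as the non-adaptive policy''---so the two-timescale/$\varepsilon$-perturbation justification you flag as the hard part is additional rigor that the paper does not attempt.
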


\begin{IEEEproof}
As $\rho \to 1$, 
the region of each robot is never empty 
and hence the robot never enters the mode in Algorithm \ref{algo: adaptive DTRP} 
where it goes toward its reference position $p_i$ \cite{Pavone10_adaptiveDTRP}.
By Theorem \ref{thm: load balancing convergence}, the space partition  
allocating events to robots converges as $k \to \infty$ to a power diagram $\{R_{i}\}_{i \in [n]}$ such that $\mathbb P_z(R_i)=1/n$. 
Hence the adaptive policy behaves in steady-state as the non-adaptive policy 
and satisfies (\ref{eq: system time bounds}). 
In the light traffic regime ($\lambda \to 0$) and in steady-state,
each agent following Algorithm \ref{algo: adaptive DTRP} is at the median of its region $R_i$ 
when a new event occurs. 
Indeed the updates of the reference position $p_i$ can be viewed as a stochastic gradient descent algorithm
for the cost $g_i(p_i) = E[\|p_i - Z\| | Z \in R_i]$ (notice here that the space partition 
evolves independently of the reference locations $p_i, i \in [n]$, in constrast to Section \ref{subsection: stochastic coverage}).
In particular if $n=1$, there is just one cell and $\mathcal E_1$ 
is strictly convex, so the policy achieves the performance bound (\ref{eq: light traffic lower bound}).
\end{IEEEproof}

\begin{algorithm}
\caption{Adaptive DTRP algorithm.
Robot $i$ updates a weight $w_i \in \mathbb R$ as in Section \ref{section: load-balancing}, 
a reference position $p_{i} \in \mathbb R^q$,
and two sets of event locations $\mathcal O_i$ and $\mathcal P_i$. 
It is also associated to a fixed point $g_i \in \mathsf Q$, with $g_i \neq g_j$, for $i \neq j$. 
}
\label{algo: adaptive DTRP}
\begin{algorithmic}
\STATE \textbf{Initialization}: for $i \in [n]$, $w_{i} \gets 0$, 
$p_i \gets$ robot $i$'s initial position, 
$\mathcal O_i \gets \emptyset$, 
$\mathcal P_i \gets \emptyset$.
\STATE When event $k \geq 1$ appears at location $Z_k$:
\STATE \begin{itemize}
\item Run the \texttt{FloodMin} algorithm starting with $\hat d_j = \|Z_k - g_j\|^2 - w_{j}, j \in [n]$. 
\IF {robot $i$ terminates \texttt{FloodMin} with $\mathtt{d_i} = \hat d_i$}
\STATE $w_{i} \leftarrow w_{i} + \gamma_{k-1} (n - 1)/n$, 
$p_i \leftarrow \Pi_\mathsf Q \left [p_{i} + \gamma_{k-1} \frac{Z_{k}-p_{i}}{\|Z_{k}-p_{i}\|} \right]$,
$\mathcal O_i \gets \mathcal O_i \cup \{Z_k\}$.
\ELSE
\STATE $w_{i} \leftarrow w_i + \gamma_{k-1}/n$, and $p_i, \mathcal O_i$ remain unchanged.
\ENDIF
\end{itemize}
\STATE In parallel, execute the following process forever for each robot $i \in [n]$
\begin{enumerate}
\label{algo 1}
\STATE When $\mathcal P_i = \mathcal O_i = \emptyset$, robot $i$ travels toward $p_i$ and stays there if $p_i$ is reached.
\STATE When $\mathcal P_i = \emptyset$ and $\mathcal O_i \neq \emptyset$, then let $\mathcal P_i \gets \mathcal O_i$, $\mathcal O_i \gets \emptyset$.
Compute an Euclidean TSP tour through the points $\mathcal P_i$.
\STATE When $\mathcal P_i \neq \emptyset$, then service the locations in $\mathcal P_i$ in the order of the TSP tour, 
removing them from $\mathcal P_i$ when they are serviced. 
At the end of the tour, we are back in the situation $\mathcal P_i = \emptyset$. If $\mathcal O_i = \emptyset$, go to 1), otherwise, go to 2). 
\end{enumerate}
\end{algorithmic}
\end{algorithm}

\section{Conclusions}	\label{section: conclusions}

We have discussed robot deployment algorithms for coverage control, spatial partitioning and dynamic vehicle routing problems 
in the situation where the event location distribution is a priori unknown. By adopting the unifying point of view of stochastic gradient algorithms 
we can derive simple algorithms in each case that locally optimize the objective function (globally in the case of the partitioning problem). 
The coverage control and space partitioning algorithms are combined to provide a fully adaptive solution to the DTRP, 
with performance guarantees in heavy and light traffic conditions. 

Among the issues associated with stochastic gradient algorithms, we point out that they can be slower than their deterministic counterparts 
and that their practical performance is sensitive to the tuning of the stepsizes $\gamma_k$. Many guidelines are available 
in the literature on stochastic approximation algorithms for the selection of good stepsizes and possibly iterate averaging, see e.g. \cite{Kushner03_SA, Spall03_SA}. 
In addition, if some prior knowledge about the event distribution is available, it can be leveraged in a straightforward hybrid solution that 
first deploys the robots using a deterministic gradient algorithm. 
Once the robots have converged, the adaptive algorithm is used to correct for the modeling errors and environmental uncertainty, 
exploiting actual observations. 
Note also that the stochastic gradient algorithms can still be used if the distribution $\mathbb P_z$ is known, 
by generating random targets artificially and essentially evaluating integrals such as (\ref{eq: partial derivative distortion}) 
by Monte-Carlo simulations \cite{Pages98_quantization}. 
However, this method is generally only advantageous for dimensions $q$ sufficiently large.



%

\appendices
%


\section{Convergence of the Coverage Control Algorithm}	\label{appendix: Lloyd gradient flow}

In this appendix we collect a number of useful properties of the gradient system 
\begin{equation}	\label{eq: Lloyd ODE}
\dot p = - \nabla \mathcal E_n (p), \;\; p(0) \in \mathsf Q^n \setminus \mathsf D_n,
\end{equation}
where the distortion function $\mathcal E_n$ is defined in (\ref{eq: coverage simple general}),
and $\mathsf Q \subset \mathbb R^q$ is convex and compact.
As discussed below, this ODE is well defined on $\mathsf Q^n \setminus \mathsf D_n$.
We also consider its extension to $\mathsf Q^n$ in the form of the differential inclusion
\begin{equation}		\label{eq: Lloyd DI}
\dot p \in \mathcal H_n(p), \;\; p(0) \in \mathsf Q^n,
\end{equation}
where the the set-valued map $\mathcal H_n$ is defined in (\ref{eq: set value map gradient - specific}).
Note that $\nabla \mathcal E_n$ is piecewise continuous and $\mathcal H_n$ can in
fact equivalently be defined as \cite[p.51]{Cortes08_discontinuous}
\begin{equation}	\label{eq: set value map gradient pw cont.}
\mathcal H_n(p) 
= \begin{cases}
\{-\nabla \mathcal E_n(p)\} \;\; \text{if } p \notin \mathsf D_n, \\
\overline{\text{co}} \left \{ \lim_{k \to \infty} (- \nabla \mathcal E_n(p_k)) 
| p_k \to p \text{ as } k \to \infty \right \} \\
\hspace{5cm} \text{if } p \in \mathsf D_n.
\end{cases}
\end{equation}

Following the ODE method \cite{Ljung77_SA}, we can characterize the asymptotic behavior 
of the algorithm  (\ref{eq: update law Voronoi case}) 
as in Theorems \ref{thm: standard thm ODE}  and \ref{thm: main result coverage} 
by studying the properties of these continuous-time dynamical systems.
We assume as in section \ref{subsection: stochastic coverage} that $f: \mathbb R_{\geq 0} \to \mathbb R_{\geq 0}$ 
is increasing and continuously differentiable. 
We refer the reader to \cite{Pages98_quantization, Du99_Voronoi, Okabe2000_Voronoi, Cortes05_coverageComm} 
for previous work on the gradient system (\ref{eq: Lloyd ODE}).
In particular, \cite{Pages98_quantization} provides some convergence results for algorithm (\ref{eq: update law Voronoi case}),
and points out that the non-differentiability of $\mathcal E_n$ creates technical difficulties in the convergence proofs. 
We handle these issues by initially considering the differential inclusion (\ref{eq: Lloyd DI}) instead of the ODE (\ref{eq: Lloyd ODE}). 
%


\subsection{Differentiability Properties of $\mathcal E_n$}

The first task is to prove the Lipschitz continuity and differentiability properties of the function $\mathcal E_n$ 
stated in Proposition \ref{prop: derivative of distortion}. We follow the argument of 
\cite[Proposition 9]{Pages98_quantization}. Let us begin with some preliminary lemmas.

\begin{lem}	\label{lem: Lipschitz integrand}
For every $z \in \mathsf Q$, the function $p \to e(p,z) := \min_{i \in [n]} \{f(\|z-p_i\|\}$ is uniformly Lipschitz 
continuous, with
\begin{align*}
|e(p,z) - e(p',z)| \leq \left (\max_{x \in [0,\text{\emph{diam}}(\mathsf Q)]} f'(x) \right) \max_{i \in [n]} \|p_i-p'_i\|,\\
 \;\; \forall p, p' \in \mathsf Q^n, \; \forall z \in \mathsf Q.
\end{align*}
\end{lem}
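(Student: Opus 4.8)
The plan is to reduce the statement to two elementary facts: that the single‑center function $x \mapsto f(\|z-x\|)$ is $L$‑Lipschitz on $\mathsf Q$ with $L := \max_{x \in [0,\mathrm{diam}(\mathsf Q)]} f'(x)$, and that a pointwise minimum of finitely many functions sharing a common Lipschitz constant inherits that constant.

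First I would note that since $f$ is continuously differentiable, $f'$ is continuous on the compact interval $[0,\mathrm{diam}(\mathsf Q)]$, so $L$ is finite. Because $p_i,p_i',z$ all lie in $\mathsf Q$, the numbers $\|z-p_i\|$ and $\|z-p_i'\|$ belong to $[0,\mathrm{diam}(\mathsf Q)]$. Applying the mean value theorem to $f$ on the interval with endpoints $\|z-p_i\|$ and $\|z-p_i'\|$, together with the reverse triangle inequality $\big|\,\|z-p_i\|-\|z-p_i'\|\,\big| \le \|p_i-p_i'\|$, gives, for each $i \in [n]$,
\[
\big| f(\|z-p_i\|) - f(\|z-p_i'\|) \big| \;\le\; L\,\|p_i-p_i'\| \;\le\; L\,\max_{j \in [n]} \|p_j - p_j'\|.
\]
(The case $\|z-p_i\| = \|z-p_i'\|$ is trivial.)

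Second, I would invoke the elementary inequality $\big|\min_{i\in[n]} a_i - \min_{i\in[n]} b_i\big| \le \max_{i\in[n]} |a_i - b_i|$ for real numbers $a_i,b_i$, which follows by picking an index attaining the smaller of the two minima and estimating. Taking $a_i = f(\|z-p_i\|)$ and $b_i = f(\|z-p_i'\|)$ and combining with the previous display yields $|e(p,z)-e(p',z)| \le L\,\max_{i\in[n]}\|p_i-p_i'\|$, and the bound is uniform in $z \in \mathsf Q$ since $L$ does not depend on $z$. The argument is entirely routine; the only points needing a moment's care are the degenerate case above and verifying the min‑stability inequality in exactly this form, neither of which is a genuine obstacle.
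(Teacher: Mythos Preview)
Your proof is correct and essentially the same as the paper's: both use the mean value theorem for $f$ on $[0,\mathrm{diam}(\mathsf Q)]$ together with the triangle inequality and the stability of the minimum under componentwise perturbations. The only cosmetic difference is the order of operations---the paper first writes $e(p,z)=f(\min_i\|z-p_i\|)$ (using that $f$ is increasing) and then bounds $|\min_i\|z-p_i\|-\min_i\|z-p_i'\||$, whereas you bound each $|f(\|z-p_i\|)-f(\|z-p_i'\|)|$ first and then invoke $|\min_i a_i-\min_i b_i|\le\max_i|a_i-b_i|$; the content is identical.
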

\begin{IEEEproof}
Let $p, p' \in \mathsf Q^n$. Denote $\mathcal S = \{p_1,\ldots,p_n\} \subset \mathsf Q$, and similarly for 
$\mathcal S'$. We have $e(p,z) = f(d(z,\mathcal S))$. Then 
\begin{align*}
&| f(d(z,\mathcal S)) - f(d(z,\mathcal S')) | \\
&\leq \left (\max_{x \in [0,\text{diam}(\mathsf Q)]} f'(x) \right) | \min_{i \in [n]} \|p_i-z\| - \min_{j \in [n]} \|p'_j-z\| |.
\end{align*}
Fix $z \in \mathsf Q$, and denote $\min_{i \in [n]} \|p_i-z\| = \|p_{i_0} - z\|$ and 
$\min_{j \in [n]} \|p'_j-z\| = \|p'_{j_0} - z\|$. Then
\begin{align*}
\min_{i \in [n]} \|p_i-z\| - \min_{j \in [n]} \|p'_j-z\| \leq \|p_{j_0} - z\| - \|p'_{j_0} - z\| \\
\leq \|p_{j_0} - p'_{j_0} \| \leq \max_i \|p_i - p'_i\|,
\end{align*}
and
\begin{align*}
\min_{j \in [n]} \|p'_j-z\| - \min_{i \in [n]} \|p_i-z\| \leq \|p'_{i_0} - z\| - \|p_{i_0} - z\| \\
\leq \|p_{i_0} - p'_{i_0} \| \leq \max_i \|p_i - p'_i\|.
\end{align*}
This proves the lemma.
\end{IEEEproof}

\begin{lem} \label{eq: lemma derivative under integral}
Let $e: \mathbb R^m \times \mathsf Q \to \mathbb R$ be a function and $O \subset \mathbb R^{m}$ be an open set such that, for all $p \in O$, for all $i \in [n]$, the partial derivative $z \to \partial e/\partial p_i(p,z)$ exists $\mathbb P_z$-almost surely. Moreover, assume that there is a constant $C$ and a norm $\| \cdot \|$ on $\mathbb R^m$ such that $p \to e(p,z)$ is uniformly Lipschitz, i.e.,
\[
| e(p,z) - e(p,z) | \leq C \|p - p' \|, \;\; \forall p, p' \in O, \; \forall z \in \mathsf Q.
\]
Then the function $p \to \int_{\mathsf Q} e(p,z) \mathbb P_z(dz)$ is differentiable on $O$ and we have, for any $p \in O$, and for each $i \in [n]$,
\[
\frac{\partial}{\partial p_i} \int_{\mathsf Q} e(p,z) \mathbb P_z(dz) = \int_{\mathsf Q} \frac{\partial}{\partial p_i} e(p,z)  \mathbb P_z(dz).
\]
\end{lem}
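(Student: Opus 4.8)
The plan is to obtain the formula by applying the dominated convergence theorem to difference quotients of $P(p) := \int_{\mathsf Q} e(p,z)\, \mathbb P_z(dz)$. Fix $p \in O$ and $i \in [n]$, and perturb the $i$-th block of coordinates by a vector $tu$ with $u \in \mathbb R^q$, $\|u\| = 1$, letting $t \to 0$. Since $O$ is open, $p + tu \in O$ for all sufficiently small $|t|$, and we may write
\begin{equation*}
\frac{P(p+tu) - P(p)}{t} = \int_{\mathsf Q} \frac{e(p+tu,z) - e(p,z)}{t}\, \mathbb P_z(dz).
\end{equation*}
First I would establish pointwise convergence of the integrand: by hypothesis the partial derivative $z \mapsto \partial e/\partial p_i(p,z)$ exists for $\mathbb P_z$-almost every $z$, so for $\mathbb P_z$-a.e.\ $z$ and any sequence $t_n \to 0$ the quotients $\bigl(e(p+t_n u,z)-e(p,z)\bigr)/t_n$ converge to the corresponding directional derivative of $e(\cdot,z)$ at $p$. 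The exceptional null set depends on $p$, but $p$ is fixed throughout this argument, so this causes no difficulty.

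Second, I would produce the dominating function: the uniform Lipschitz assumption gives $|e(p+tu,z) - e(p,z)| \le C\,\|tu\| = C\,|t|$ for every $z \in \mathsf Q$ and every admissible $t$, so the integrand above is bounded in absolute value by the constant $C$, uniformly in $z$ and $t$; since $\mathbb P_z$ is a probability measure on $\mathsf Q$, this constant function is $\mathbb P_z$-integrable. Applying the dominated convergence theorem along an arbitrary sequence $t_n \to 0$ then yields that $\lim_{t\to 0}\bigl(P(p+tu)-P(p)\bigr)/t$ exists and equals $\int_{\mathsf Q} \partial e/\partial p_i(p,z)\,\mathbb P_z(dz)$ (reading this componentwise as $u$ ranges over a basis of the $i$-th block of coordinates), which is exactly the claimed identity; the existence of all these partial derivatives at every $p \in O$ is the differentiability asserted in the statement. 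Note that the resulting partial derivatives of $P$ are themselves integrals against the probability measure $\mathbb P_z$ of the (locally bounded) partials of $e$, which is what makes the formula useful downstream, e.g.\ for deducing continuity of $\nabla \mathcal E_n$ away from $\mathsf D_n$ once continuity of the integrand in $p$ is verified separately.

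There is no serious obstacle here: the argument is a textbook application of dominated convergence (see, e.g., \cite[Theorem 5.1]{Spall03_SA}). The only points requiring a little care are (i) translating the Lipschitz bound, which is stated with respect to a fixed norm on $\mathbb R^m$, into the scalar bound used on the single-variable difference quotients above, which is immediate by equivalence of norms at the cost of an inessential multiplicative constant; and (ii) observing that it is the finiteness of $\mathbb P_z$ that makes the constant dominating function integrable, so that compactness of $\mathsf Q$ is not actually needed for this lemma itself, only for its application in Proposition \ref{prop: derivative of distortion} via Lemma \ref{lem: Lipschitz integrand}.
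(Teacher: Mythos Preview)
Your proposal is correct and follows essentially the same route as the paper: form the difference quotient, bound it pointwise by the Lipschitz constant $C$, and apply the dominated convergence theorem using the constant $C$ as the dominating function (integrable since $\mathbb P_z$ is finite). The paper's proof is slightly terser, perturbing along standard basis vectors $e_i$ rather than general unit directions in the $i$-th block, but the argument is the same.
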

\begin{IEEEproof}
Denote $F(p) =  \int_{\mathsf Q} e(p,z) \mathbb P_z(dz)$, and $\tilde e(p,h,z) = \frac{[e(p+h e_i,z) - e(p,z)]}{h}$. Then
\[
\lim_{h \to 0} \frac{F(p+he_i) - F(p)}{h} = \lim_{h \to 0} \int_{\mathsf Q} \tilde e(p,h,z) \mathbb P_z(dz).
\]
Now for all $h$ sufficiently small and all $z \in \mathsf Q$, we have $|\tilde e(p,h,z)| \leq C$ by the Lipschitz continuity assumption. Hence by the dominated convergence theorem, 
\begin{align*}
\frac{\partial}{\partial p_i} \int_{\mathsf Q} e(p,z) \mathbb P_z(dz) &= \lim_{h \to 0} \frac{F(p+he_i) - F(p)}{h} \\
&= \lim_{h \to 0} \int_{\mathsf Q} \tilde e(p,h,z) \mathbb P_z(dz) \\
&= \int_{\mathsf Q} \lim_{h \to 0} \tilde e(p,h,z) \mathbb P_z(dz)  \\
&= \int_{\mathsf Q} \frac{\partial}{\partial p_i} e(p,z)  \mathbb P_z(dz).
\end{align*}
\end{IEEEproof}

We now prove Proposition \ref{prop: derivative of distortion}.
\begin{IEEEproof}
[Proof of Proposition \ref{prop: derivative of distortion}]
The inequality
\[
| \mathcal E_n(p) - \mathcal E_n(p') | \leq \left (\max_{x \in [0,\text{diam}(\mathsf Q)]} f'(x) \right) \max_{i \in [n]} \|p_i-p'_i\|,
\]
is immediate from Lemma \ref{lem: Lipschitz integrand}, so $\mathcal E_n$ is globally Lipschitz continuous on $\mathsf Q^n$.
Assume now that $p \in \mathsf Q^n \setminus \mathsf D_n$.
Note that for $z \notin \cup_{i=1}^n \partial V_i(p)$ and $z \neq p_i, i \in [n]$, we have
\[
\frac{\partial}{\partial p_i} e(p,z) = f'(\|p_i - z\|) \frac{p_i - z}{\|p_i - z\|} 1_{V_i(p)} (z).
\]
Since the set where this formula does not hold has $\mathbb P_z$-mesure zero under our assumption (recall that $\partial V_i(p)$ consists of subsets of hyperplanes), $\mathcal E_n$ is differentiable at $p$ and (\ref{eq: partial derivative distortion}) is a direct consequence of Lemma \ref{eq: lemma derivative under integral}. Moreover, $\mathcal E_n$ is in fact continuously differentiable at $p$ if and only if the partial derivatives $\partial \mathcal E_n(p) / \partial p_i$ are continuous at $p$, for all $i \in [n]$. 
Since we assume that $f'$ is continuous on $\mathsf Q$, the function $p \to I(p,z) := f'(\|p_i - z\|) \frac{p_i - z}{\|p_i - z\|} 1_{V_i(p)} (z)$ is continuous on $\mathsf Q^n \setminus \mathsf D_n$ for $\mathbb P_z$-almost all $z \in \mathsf Q$ (i.e., for $z \notin \cup_{i=1}^n \partial V_i(p)$ and $z \neq p_i, i \in [n]$), and moreover $z \to I(p,z)$ is bounded on the compact set $\mathsf Q$. Hence the continuity of the partial derivatives (\ref{eq: partial derivative distortion}) is a consequence of the Lebesgue dominated convergence theorem.
\end{IEEEproof}


From Proposition \ref{prop: derivative of distortion}, the function $\mathcal E_n$ is 
continuously differentiable on $\mathbb R^n \setminus \mathsf D_n$. 
In general however, $\nabla \mathcal E_n$ is discontinuous on the set $\mathsf D_n$, 
see Fig. \ref{fig: simple Lloyd vector field}. 
\begin{figure}
\centering
\includegraphics[height=5cm]{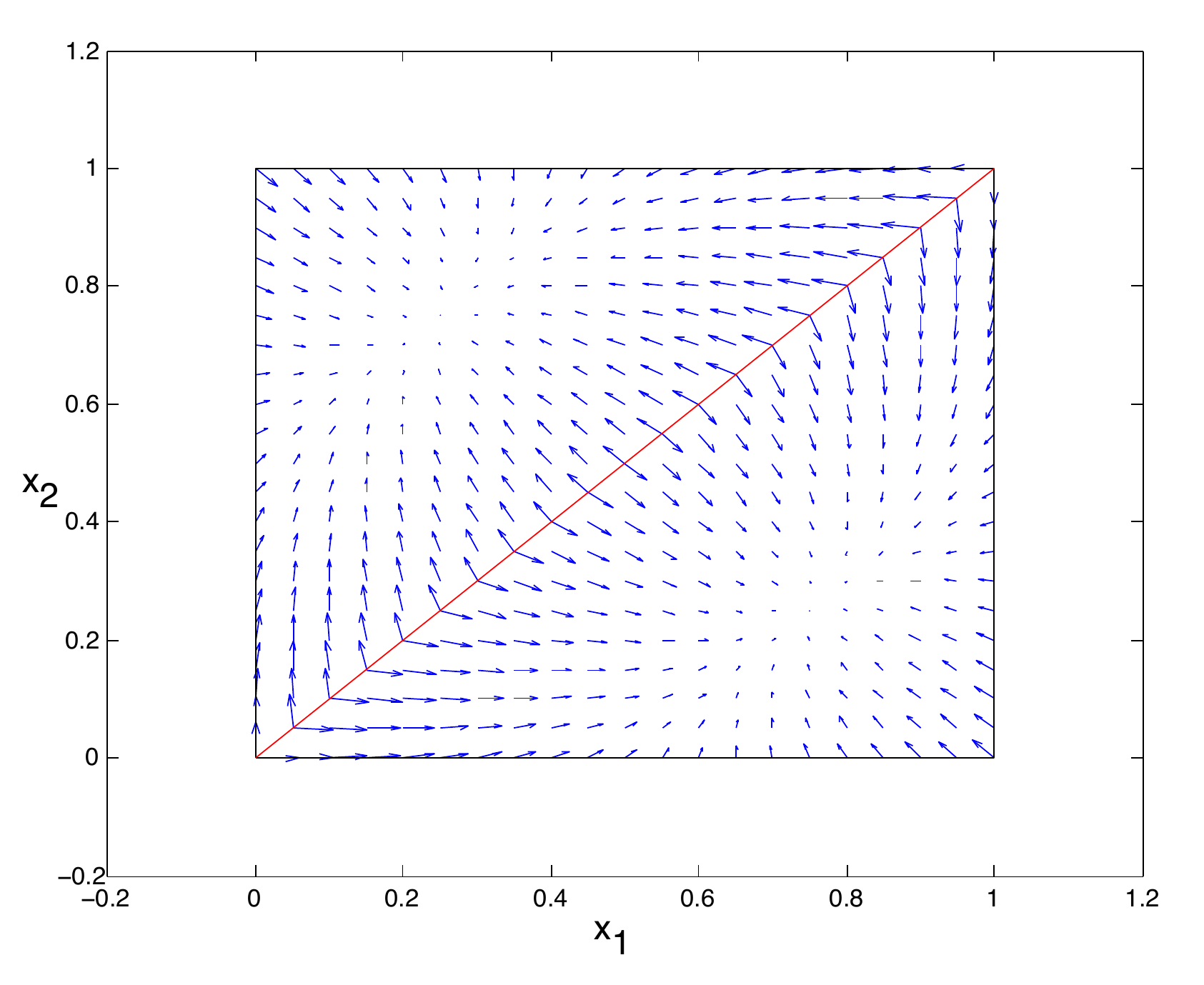}
\caption{Vector field for the gradient system (\ref{eq: Lloyd ODE}), with two agents evolving on $[0,1]$ 
and $\mathbb P_z$ uniform on $[0,1]$. The discontinuity on the line $x_1=x_2$ occurs when the two agents switch side, 
from $x_1 < x_2$ to $x_1 > x_2$. Note that the vector field is symmetric with respect to this line. 
The equilibrium occurs at a unique \emph{geometric} point on the line, namely $(1/4,3/4)$, 
corresponding to two stationary points for the flow, one for each ordering of the robots.
The differential inclusion (\ref{eq: Lloyd DI}) has an additional (unstable) equilibrium at $(1/2,1/2)$.
}
\label{fig: simple Lloyd vector field}
\end{figure}
%
%
To discuss more precisely the behavior of the gradient of $\mathcal E_n$ as we approach the set $\mathsf D_n$, define
\[
N(x) = \begin{cases}
\|\nabla \mathcal E_n(x)\|^2 & \text{ if } x \in \mathbb R^n \setminus \mathsf D_n \\
\lim \inf_{y \in \mathbb R^n \setminus \mathsf D_n, y \to x} \|\nabla \mathcal E_n(y)\|^2 & \text{ if } x \in \mathsf D_n.
\end{cases}
\]
Note that because $\nabla \mathcal E_n$ is continuous on $\mathbb R^n \setminus \mathsf D_n$  
the two definitions of $N$ in fact coincide on this set. 

Our goal is now the result of Proposition \ref{prop: nonzero gradient} below, whose proof follows that 
of \cite[Lemma 30]{Pages98_quantization}. We introduce first the notion of \emph{Voronoi aggregates}, 
which occur when several robot positions coincide. 
If $x = [x_1,\ldots,x_n] \in \mathsf D_n$, there is at least one set $J \subset [n], |J| \geq 2$, of components 
such that for all $i,j \in J, x_i = x_j=:x_{J}$ and for all $i \in J, j \notin J, x_i \neq x_j$. The set $J$ is called 
an aggregate of components of $x$, and $x \in \mathsf D_n$ can have several aggregates. The aggregate 
$J$ is then associated to the Voronoi cell $V_J(x) := \{ z \in \mathbb R^q \; | \; \| x_J - z \| \leq \| x_k - z \|, \forall k \in [n]\}$.
Let $p^{(k)} \in \mathsf Q^n \setminus \mathsf  D_n$ be a sequence converging to $p \in \mathsf  Q^n$. 
Consider the vectors 
\[
u_{ij}^{(k)} = \frac{p_j^{(k)} - p_i^{(k)}}{\|p_j^{(k)} - p_i^{(k)}\|}, \;\; i \neq j.
\]
Since the unit sphere is compact, we can extract a subsequence so that the unit vectors $u_{ij}^{(k)}$ converge, 
even if $i,j$ belong to an aggregate. We will need the following technical result.
\begin{lem}	\label{lem: technical set convergence}
Let $p^{(k)}$ be a converging sequence as above such that the vectors $u_{ij}^{(k)}$ converge, for all $i \neq j$. 
Denote $u_{ij} = \lim_{k \to \infty} u_{ij}^{(k)}$. Moreover, define for that sequence the sets
\begin{equation}	\label{eq: convergence of sets}
A_i := \{z \in \mathsf Q | \exists k_z \text{ s.t. } z \in V_i(p^{(k)}) \text{ for all } k \geq k_z \}, \; i \in [n].
\end{equation}
Let $z_0 \in \mathsf Q$. Then if hyperplanes in $\mathbb R^q$ have $\mathbb P_z$-measure zero, we have 
\begin{equation}	\label{eq: Voronoi cells convergence}
\mathbf 1_{V_i(p^{(k)})}(z_0) \to \mathbf 1_{A_i}(z_0) \text{ as } k \to \infty, \mathbb P_z\text{-almost everywhere.}
\end{equation}
\end{lem}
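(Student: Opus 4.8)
The plan is to understand the sets $A_i$ explicitly and then show that the symmetric difference between $V_i(p^{(k)})$ and $A_i$ shrinks to a $\mathbb P_z$-null set as $k\to\infty$. First I would fix a point $z_0\in\mathsf Q$ and examine, for each ordered pair $i\neq j$, the sign of the scalar $\delta_{ij}^{(k)}(z_0) := \|z_0-p_j^{(k)}\|^2 - \|z_0-p_i^{(k)}\|^2$, which governs whether $z_0$ is closer to $p_i^{(k)}$ or $p_j^{(k)}$. Expanding the square, $\delta_{ij}^{(k)}(z_0) = 2\langle z_0 - \tfrac12(p_i^{(k)}+p_j^{(k)}), p_j^{(k)}-p_i^{(k)}\rangle$. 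When $p_i^{(k)},p_j^{(k)}$ stay apart (distinct limits), the sign of $\delta_{ij}^{(k)}(z_0)$ stabilizes for large $k$ for every $z_0$ off the limiting bisecting hyperplane, which is a $\mathbb P_z$-null set by hypothesis; this is the easy case. The delicate case is when $i,j$ lie in a common aggregate $J$, so $p_i^{(k)}-p_j^{(k)}\to 0$; there I would divide by $\|p_j^{(k)}-p_i^{(k)}\|$ and use the assumed convergence $u_{ij}^{(k)}\to u_{ij}$ to see that $\operatorname{sign}\delta_{ij}^{(k)}(z_0)$ is eventually $\operatorname{sign}\langle z_0 - x_J, u_{ij}\rangle$, again for all $z_0$ outside the hyperplane $\{z : \langle z-x_J, u_{ij}\rangle = 0\}$, which is once more $\mathbb P_z$-null.

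Next I would assemble these pointwise sign stabilizations: let $H$ be the (finite) union over all pairs $i\neq j$ of the relevant limiting hyperplanes described above; then $\mathbb P_z(H)=0$, and for every $z_0\in\mathsf Q\setminus H$ there is an index $k_{z_0}$ beyond which, for all pairs $(i,j)$, the comparison between $\|z_0-p_i^{(k)}\|$ and $\|z_0-p_j^{(k)}\|$ has a fixed strict sign independent of $k$. Consequently, for $k\geq k_{z_0}$, the membership $z_0\in V_i(p^{(k)})$ is either true for all such $k$ or false for all such $k$: in the former case $z_0\in A_i$ by the very definition \eqref{eq: convergence of sets} and $\mathbf 1_{V_i(p^{(k)})}(z_0)=1=\mathbf 1_{A_i}(z_0)$ eventually; in the latter case $z_0\notin A_i$ and the indicators are eventually both $0$. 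Either way $\mathbf 1_{V_i(p^{(k)})}(z_0)\to\mathbf 1_{A_i}(z_0)$. Since this holds for all $z_0\notin H$ and $\mathbb P_z(H)=0$, we obtain \eqref{eq: Voronoi cells convergence}.

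The one point requiring care, and which I expect to be the main obstacle, is the aggregate case: when two generators collapse, the bisecting hyperplane between $p_i^{(k)}$ and $p_j^{(k)}$ degenerates, and one must argue that after rescaling the constraint still selects a well-defined limiting half-space whose bounding hyperplane passes through the common limit point $x_J$ with normal direction $u_{ij}$. The subsequence extraction forcing all the $u_{ij}^{(k)}$ to converge (legitimate because the unit sphere is compact, as noted before the lemma) is exactly what makes this work; I would also note the consistency relation that within an aggregate the limiting directions are mutually compatible, so that the resulting limiting cells $A_i$ genuinely partition $\mathsf Q$ up to a null set. Beyond that, everything reduces to the elementary observation that a strict inequality between continuous functions of $k$ that converges to a strict inequality is eventually strict, together with the hypothesis that hyperplanes are $\mathbb P_z$-null to discard the boundary cases.
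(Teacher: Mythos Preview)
Your proposal is correct and follows essentially the same approach as the paper: both identify the exceptional set as a finite union of hyperplanes (the perpendicular bisector of $(p_i,p_j)$ for non-aggregate pairs, and the hyperplane through $x_J$ with normal $u_{ij}$ for aggregate pairs), the only difference being that you argue directly that the indicator stabilizes off this set, whereas the paper argues contrapositively that any $z_0$ at which the indicator oscillates must lie on one of these hyperplanes. (There is a harmless sign slip in your expansion of $\delta_{ij}^{(k)}$---it should be $-2\langle z_0-\tfrac12(p_i^{(k)}+p_j^{(k)}),\,p_j^{(k)}-p_i^{(k)}\rangle$---but this is irrelevant since only the eventual constancy of the sign matters.)
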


\begin{proof}
If $z_0 \in A_i$, then (\ref{eq: Voronoi cells convergence}) is clear. Otherwise, there exists (at least) two indices 
$i \neq j$ such that 
\begin{align}	\label{eq: alternating sets}
z_0 \in A_{ij} := \{z \in \mathsf Q | z \in V_i(p^{(k)}) \text{ and } z \in V_j(p^{(k')}) \nonumber \\
\text{ for infinitely many } k, k' \}.
\end{align}
Hence there are subsequences $l_1(k)$ and $l_2(k)$ such that
\begin{align}	\label{eq: extraction alternating subseq}
\|p_i^{l_1(k)} - z_0 \| \leq \|p_j^{l_1(k)} - z_0 \|, \;\; \forall k \geq 0, \nonumber \\
\text{ and } \|p_j^{l_2(k)} - z_0 \| \leq \|p_i^{l_2(k)} - z_0 \|, \;\; \forall k \geq 0.
\end{align}
Let us first assume that the indices $i$ and $j$ do not belong to an aggregate. 
Letting $k \to \infty$ in each subsequence, we obtain that $\|p_i - z_0 \| = \|p_j - z_0 \|$, hence $z_0$ 
belongs to the hyperplane perpendicularly bisecting the line segment $(p_i,p_j)$. But this hyperplane has 
$\mathbb P_z$-measure zero by assumption.
Now assume that $i, j$ belong to an aggregate $J$ and consider the vectors 
\[
u_{ij}^{(k)} = \frac{p_j^{(k)} - p_i^{(k)}}{\|p_j^{(k)} - p_i^{(k)}\|}.
\]
Then from (\ref{eq: extraction alternating subseq}) we get respectively
\begin{align*}
\ip{u_{ij}^{(l_1(k))}}{z_0-p_j^{(l_1(k))}} \leq 0, \forall k \geq 0, \\ 
\text{ and } \ip{u_{ij}^{(l_2(k))}}{z_0-p_i^{(l_2(k))}} \geq 0, \forall k \geq 0.
\end{align*}
Letting again $k \to \infty$ in the subsequences, and since we assumed that $\{u_{ij}^{(k)}\}$ converges, we obtain
\[
\ip{u_{ij}}{z_0-p_J} = 0,
\]
and so $z_0$ must belong to the hyperplane perpendicular to $u_{ij}$ passing through $p_J$. 
Again, this set has measure zero. Overall (\ref{eq: Voronoi cells convergence}) hold everywhere except 
perhaps on a finite number of hyperplanes, hence it holds $\mathbb P_z$-almost everywhere.
\end{proof}

\begin{prop}	\label{prop: nonzero gradient}
Suppose that Assumption \ref{assumption: hyperplanes} holds and that $\mathbb P_z$ dominates 
the Lebesgue measure on $\mathsf Q^n$. Then we have $N(p)>0$ for all $p \in \mathsf D_n \cap \mathsf Q^n$. 
Hence there exists $\delta_0>0$ such that 
\[
\inf_{p \in \mathsf Q^n \cap (B(\mathsf D_n \cap \mathsf Q^n,\delta_0) \setminus \mathsf D_n) } 
\|\nabla \mathcal E_n(p)\|^2 =: \kappa > 0.
\]
Moreover, there exists $B < \infty$ such that $\|y\|^2 \leq B$ for all $y \in \mathcal H_n(p)$ and all $p \in \mathsf Q^n$. 
\end{prop}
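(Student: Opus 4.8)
The boundedness of $\mathcal H_n$ and the uniform gap near $\mathsf D_n$ are the easy parts, so I would prove them first and then concentrate on $N(p)>0$. For the bound, (\ref{eq: partial derivative distortion}) together with $\|p_i-z\|\le\mathrm{diam}(\mathsf Q)$ for $p_i,z\in\mathsf Q$ gives $\|\partial\mathcal E_n/\partial p_i|_p\|\le M\,\mathbb P_z(V_i(p))\le M$ for $p\in\mathsf Q^n\setminus\mathsf D_n$, where $M:=\max_{x\in[0,\mathrm{diam}(\mathsf Q)]}f'(x)<\infty$; hence $\|\nabla\mathcal E_n(p)\|^2\le nM^2=:B$, and since by (\ref{eq: set value map gradient pw cont.}) every $y\in\mathcal H_n(p)$ with $p\in\mathsf D_n$ lies in the closed convex hull of limits of such gradients, convexity and continuity of the norm give $\|y\|^2\le B$ on all of $\mathsf Q^n$. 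For the uniform gap, I would argue by contradiction: if the infimum were $0$, a sequence $p^{(m)}\in\mathsf Q^n\cap(B(\mathsf D_n\cap\mathsf Q^n,1/m)\setminus\mathsf D_n)$ with $\|\nabla\mathcal E_n(p^{(m)})\|^2\to 0$ would, by compactness of $\mathsf Q^n$ and closedness of $\mathsf D_n\cap\mathsf Q^n$, subconverge to some $p^\ast\in\mathsf D_n\cap\mathsf Q^n$; then $N(p^\ast)\le\liminf_m\|\nabla\mathcal E_n(p^{(m)})\|^2=0$, contradicting the first assertion.

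It therefore remains to prove $N(p)>0$ for every $p\in\mathsf D_n\cap\mathsf Q^n$, along the lines of \cite[Lemma 30]{Pages98_quantization}. Suppose $N(p)=0$. Then there is $p^{(k)}\in\mathsf Q^n\setminus\mathsf D_n$ with $p^{(k)}\to p$ and $\nabla\mathcal E_n(p^{(k)})\to 0$, and after the extraction of Lemma \ref{lem: technical set convergence} the unit vectors $u^{(k)}_{ij}$ converge to some $u_{ij}$ and $\mathbf 1_{V_i(p^{(k)})}\to\mathbf 1_{A_i}$ $\mathbb P_z$-a.e., with $A_i$ as in (\ref{eq: convergence of sets}). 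Fix an aggregate $J$ of $p$ (possible since $p\in\mathsf D_n$), with common value $p_J$, and fix $i_0\in J$. Since $p^{(k)}_{i_0}\to p_J$ and $f'$ is continuous, the integrand of (\ref{eq: partial derivative distortion}) for component $i_0$ converges $\mathbb P_z$-a.e.\ to $f'(\|p_J-z\|)\frac{p_J-z}{\|p_J-z\|}\mathbf 1_{A_{i_0}}(z)$ and is bounded by $M$, so dominated convergence yields
\[
0=\lim_k\frac{\partial\mathcal E_n}{\partial p_{i_0}}\Big|_{p^{(k)}}=\int_{A_{i_0}}f'(\|p_J-z\|)\,\frac{p_J-z}{\|p_J-z\|}\,\mathbb P_z(dz).
\]

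Next I would read off the geometry of $A_{i_0}$. For $z\in A_{i_0}$ and any $j\in J\setminus\{i_0\}$, the Voronoi inequality $\|z-p^{(k)}_{i_0}\|\le\|z-p^{(k)}_j\|$ rearranges, for all large $k$, into $\langle u^{(k)}_{i_0 j},\,z-\frac{1}{2}(p^{(k)}_{i_0}+p^{(k)}_j)\rangle\le 0$, hence in the limit $\langle u_{i_0 j},z-p_J\rangle\le 0$; thus $A_{i_0}$ lies in a half-space whose bounding hyperplane passes through $p_J$. Taking the inner product of the displayed identity with $u_{i_0 j}$ renders the integrand $f'(\|p_J-z\|)\langle p_J-z,u_{i_0 j}\rangle/\|p_J-z\|$ nonnegative on $A_{i_0}$, so it vanishes $\mathbb P_z$-a.e.\ there; since $f'>0$ this forces $\langle p_J-z,u_{i_0 j}\rangle=0$ (or $z=p_J$) $\mathbb P_z$-a.e.\ on $A_{i_0}$, and that set lies in a hyperplane, of $\mathbb P_z$-measure zero by Assumption \ref{assumption: hyperplanes}. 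Hence $\mathbb P_z(A_{i_0})=0$ for every $i_0\in J$.

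Finally, the ``alternating set'' part of the proof of Lemma \ref{lem: technical set convergence} shows that $\mathbb P_z$-a.e.\ point of the Voronoi cell $V_J(p)$ lies, for all large $k$, in some $V_{i_0}(p^{(k)})$ with $i_0\in J$, i.e.\ in $\bigcup_{i_0\in J}A_{i_0}$; therefore $\mathbb P_z(V_J(p))\le\sum_{i_0\in J}\mathbb P_z(A_{i_0})=0$. But $V_J(p)$ contains the ball $B(p_J,r)$ with $r=\frac{1}{2}\min_{k:\,p_k\ne p_J}\|p_J-p_k\|>0$, so $\lambda(V_J(p)\cap\mathsf Q)>0$ because $\mathsf Q$ is convex with nonempty interior and $p_J\in\mathsf Q$; since $\mathbb P_z$ dominates $\lambda$, $\mathbb P_z(V_J(p))=0$ would force $\lambda(V_J(p)\cap\mathsf Q)=0$, a contradiction, so $N(p)>0$. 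The main obstacle I anticipate is the limit interchange in the third paragraph: justifying the $\mathbb P_z$-a.e.\ convergence of the Voronoi indicators and passing the limit inside (\ref{eq: partial derivative distortion}) along the degenerating sequence $p^{(k)}$ — precisely the role of Lemma \ref{lem: technical set convergence} — together with the bookkeeping that keeps each $A_{i_0}$ pinned to a half-space through $p_J$.
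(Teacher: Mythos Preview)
Your proof is correct and follows essentially the same route as the paper's: you pass to a subsequence along which the unit directions $u_{ij}^{(k)}$ converge, use Lemma~\ref{lem: technical set convergence} together with dominated convergence to push the limit inside (\ref{eq: partial derivative distortion}), pair the resulting integral against a direction $u_{i_0 j}$ to obtain a nonnegative integrand that must vanish $\mathbb P_z$-a.e., and conclude $\mathbb P_z(A_{i_0})=0$ for all $i_0\in J$, contradicting $\mathbb P_z(V_J)>0$ via the domination hypothesis. The only cosmetic differences are that the paper works with the cross-indexed quantity $I_{ij}^k=\int 1_{V_j(p^{(k)})} f'(\|p_i^{(k)}-z\|)\langle u_{ij}^{(k)},(z-p_i^{(k)})/\|z-p_i^{(k)}\|\rangle\,\mathbb P_z(dz)$ rather than projecting $\partial\mathcal E_n/\partial p_{i_0}$ directly, and derives the uniform gap $\kappa>0$ by a finite-subcover argument (using lower semicontinuity of $N$ on the compact set $\mathsf D_n\cap\mathsf Q^n$) rather than your sequential contradiction; both variants are equivalent.
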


\begin{IEEEproof} 
Denote $\mathsf {\tilde D}_n := \mathsf D_n \cap \mathsf Q^n$.
Let $p \in \mathsf {\tilde D}_n$, and $J$ be an aggregate for $p$. By the characterization of lower limits, 
consider a sequence $p^{(k)} \in \mathsf Q^n \setminus \mathsf D_n$ converging to $p$ and 
such that $\|\nabla \mathcal E_n(p^k)\|^2 \to N(p)$ as $k \to \infty$. 
Up to taking a subsequence, we can assume that the sequences of unit vectors
$
u_{ij}^{(k)} := \frac{p_j^{(k)} - p_i^{(k)}}{\|p_j^{(k)} - p_i^{(k)}\|} 
$
converge for $i,j$ in the aggregate, so that
\[
u_{ij}^{(k)} \xrightarrow[k \to \infty]{} u_{ij}, \forall i \neq j.
\]
We are now in the situation of Lemma \ref{lem: technical set convergence}.
Next, consider the quantity
\begin{align*}
&I_{ij}^k = \\
&\int_{\mathsf Q} 1_{\{V_j(p^{(k)})\}}(z) f'(\|p^{(k)}_i - z\|) 
\underbrace{\ip{u_{ij}^{(k)}}{\frac{z - p^{(k)}_i}{\|z-p^{(k)}_i\|}}}_{\geq0} \mathbb P_z(dz),
\end{align*}
for $i,j \in J$. It is easy to see that for $z \in V_j(p^k)$, the inner product inside the integral is nonnegative. 
By the dominated convergence theorem, and using (\ref{eq: Voronoi cells convergence}), we have
\begin{equation}	\label{eq: def of Iij}
I_{ij}^k \xrightarrow[k \to \infty]{} I_{ij} := \int_{A_j} f'(\|p_J - z\|) 
\underbrace{\ip{u_{ij}}{\frac{z - p_J}{\|z-p_J\|}}}_{\geq 0} \mathbb P_z(dz).
\end{equation}
On the other hand, we also have by (\ref{eq: partial derivative distortion})
\begin{align*}
&\ip{u^{(k)}_{ij}}{- \frac{\partial \mathcal E_n}{\partial p_j}(p^{(k)})} = \\
&\int_{\mathsf Q} 1_{\{V_j(p^{(k)})\}}(z) f'(\|p^{(k)}_j - z\|) \ip{u_{ij}^{(k)}}{\frac{z - p^{(k)}_j}{\|z-p^{(k)}_j\|}} \mathbb P_z(dz),
\end{align*}
and so, 
\[
\lim_{k \to \infty} \ip{u^{(k)}_{ij}}{- \frac{\partial \mathcal E_n}{\partial p_j}(p^{(k)})} = 
\ip{u_{ij}}{\lim_{k \to \infty} - \frac{\partial \mathcal E_n}{\partial p_j}(p^{(k)})} = I_{ij},
\]
again by the dominated convergence theorem.
Hence if $\lim_{k \to \infty} - \frac{\partial \mathcal E_n}{\partial p_j}(p^k) = 0$, then $I_{ij} = 0$. 
But since the inner product inside the integral in (\ref{eq: def of Iij}) is zero only on the  hyperplane 
perpendicular to $u_{ij}$ passing through $p_J$, it must be strictly positive outside of this hyperplane. 
Also, we assumed that $f'$ is strictly positive. This implies that $\mathbb P_z(A_j) = 0$. 
This holds for all $j \in J$ if $N(p) = 0$.

But now take $z \in V_J$, the aggregate Voronoi cell. Then $z \in A_j$ for some $j \in J$, or $z \in A_{ij}$ 
as defined in (\ref{eq: alternating sets}), for some $i \neq j$. The set of points satisfying the second condition 
has measure $0$ as shown in Lemma \ref{lem: technical set convergence}. In other words, 
$V_j = \cup_{j \in J} A_j \cup \mathcal S$, with $\mathbb P_z (\mathcal S) = 0$. We obtain therefore 
$\mathbb P_z(V_J) = \sum_{j \in J} \mathbb P_z (A_j) = 0$. But this is impossible since $V_J$ is a polygon with 
non-empty interior, which has positive Lebesgue measure. Therefore $N(p) > 0$.

Next, recall the following definition of the lower limit \cite[p.8]{Rockafellar98_variational}
\[
\liminf_{x \to \bar x} f(x) = \sup_{V \in \mathcal N(\bar x)} \left [ \inf_{x \in V} f(x) \right],
\]
where $\mathcal N(x)$ denotes the set of neighborhoods of $x$.
Now if $N(p) > 0$ for all $p \in \mathsf {\tilde D}_n$, then by the characterization of the supremum, 
we have that for all $\bar p \in \mathsf {\tilde D}_n$, there exists a neighborhood 
$V(\bar p) \in \mathcal N(\bar p)$ such that
\[
0 < \frac{N(\bar p)}{2} \leq \inf_{p \in V(\bar p) \setminus \{\bar p\}} \|\nabla \mathcal E_n(p)\|^2 \leq N(\bar p).
\]
Without loss of generality, we can assume that $V(\bar p)$ is open. Then $\bigcup_{\bar p \in \mathsf {\tilde D}_n} V(\bar p)$ 
forms an open cover of the compact set $\mathsf {\tilde D}_n$, hence we can extract a finite subcover 
$V(\bar p^1), \ldots, V(\bar p^N)$. Clearly this finite subcover is again a neighorhood of $\mathsf {\tilde D}_n$, 
hence it contains $B(\mathsf {\tilde D}_n,\delta_0)$ for some $\delta_0 > 0$.
Since $N$ is a lower semi-continuous function, it attains it minimum $N^*$ on $\mathsf {\tilde D}_n$. 
We then have $\kappa \geq N^*/2 > 0$ in the proposition.

Finally, the fact the $N(p) < \infty$ follows from (\ref{eq: partial derivative distortion}). This immediately 
gives the last part of the proposition by definition of the set-valued map $\mathcal H_n$.
\end{IEEEproof}


\subsection{Trajectories of the Gradient System}

We now turn to the study of the trajectories of the ODE (\ref{eq: Lloyd ODE}) and the differential inclusion (\ref{eq: Lloyd DI}). 

\begin{prop}	\label{prop: finite time D intersection}
Suppose that Assumption \ref{assumption: hyperplanes} holds and that $\mathbb P_z$ 
dominates the Lebesgue measure on $\mathsf Q^n$.
If $x_0 \in \mathsf Q^n \setminus \mathsf D_n$, a trajectory $t \to x(t)$ of the ODE (\ref{eq: Lloyd ODE}) 
with $x(0) = x_0$ remains in $\mathsf Q^n \setminus \mathsf D_n$, i.e., 
for all $t < \infty$, $x(t) \in \mathsf Q^n \setminus \mathsf D_n$. 
Moreover, it converges to a compact connected subset of $\{x \in \mathsf Q^n \setminus \mathsf D_n: \nabla \mathcal E_n(x) = 0\}$.
Finally, a trajectory of the differential inclusion (\ref{eq: Lloyd DI}) starting from $x_0 \in \mathsf Q^n$ remains in $\mathsf Q^n$.
\end{prop}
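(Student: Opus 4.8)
\emph{Overall plan and invariance of $\mathsf Q^n$.} The proposition has three parts — forward invariance of $\mathsf Q^n$ under the ODE and under the inclusion, non‑reachability of $\mathsf D_n$ by the ODE, and convergence to the critical set — and I would dispose of the invariance of $\mathsf Q^n$ first. Since $\mathsf Q$ is convex, the tangent cone $T_{\mathsf Q}(\bar z)$ at any $\bar z\in\mathsf Q$ is a closed convex cone containing $z-\bar z$ for every $z\in\mathsf Q$. By (\ref{eq: partial derivative distortion}), at $p\in\mathsf Q^n\setminus\mathsf D_n$ the $i$‑th block of $-\nabla\mathcal E_n(p)$ is the $\mathbb P_z$‑average over $V_i(p)\cap\mathsf Q$ of the vectors $f'(\|p_i-z\|)\,\|z-p_i\|^{-1}(z-p_i)$, each a nonnegative multiple of $z-p_i$ with $z\in\mathsf Q$; hence it lies in $T_{\mathsf Q}(p_i)$ and $-\nabla\mathcal E_n(p)\in T_{\mathsf Q^n}(p)=\prod_{i\in[n]}T_{\mathsf Q}(p_i)$, so by Nagumo's invariance theorem solutions of (\ref{eq: Lloyd ODE}) stay in $\mathsf Q^n$. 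Running the same estimate along an arbitrary sequence $p^{(k)}\to p$ shows that every accumulation point of $-\nabla\mathcal E_n(p^{(k)})$ keeps its $i$‑th block in $T_{\mathsf Q}(p_i)$, so — $T_{\mathsf Q^n}(p)$ being closed and convex — the whole Filippov set $\mathcal H_n(p)$ of (\ref{eq: set value map gradient - specific}) lies in $T_{\mathsf Q^n}(p)$; since $\mathcal H_n$ is upper semicontinuous with nonempty compact convex values and bounded on $\mathsf Q^n$ (Proposition~\ref{prop: nonzero gradient}), the differential‑inclusion version of Nagumo's theorem gives the last assertion.

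\emph{The ODE does not meet $\mathsf D_n$.} Off $\mathsf D_n$ the field $-\nabla\mathcal E_n$ is locally Lipschitz (Proposition~\ref{prop: derivative of distortion}), so the solution from $x_0\in\mathsf Q^n\setminus\mathsf D_n$ is unique on a maximal interval $[0,T_{\max})$ inside $\mathsf Q^n\setminus\mathsf D_n$, along which $\tfrac{d}{dt}\mathcal E_n(x(t))=-\|\nabla\mathcal E_n(x(t))\|^2\le 0$, $\mathcal E_n\ge 0$, and $\|\dot x\|\le\sqrt B$ by Proposition~\ref{prop: nonzero gradient}. The decrease of $\mathcal E_n$ together with $\|\nabla\mathcal E_n\|^2\ge\kappa$ on $B(\mathsf D_n,\delta_0)\setminus\mathsf D_n$ already rules out $x(t)\to\mathsf D_n$ as $t\to\infty$; the remaining possibility, $T_{\max}<\infty$ with $x(t)\to x^\star\in\mathsf D_n\cap\mathsf Q^n$ (forced by the Lipschitz bound, else the flow extends), is excluded by a \emph{repulsion estimate}. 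Write $\pi(p)$ for a nearest point of $\mathsf D_n$ to $p$, obtained by merging a closest pair $(i,j)$ of agents to their midpoint, and $\nu(p)=(p-\pi(p))/\|p-\pi(p)\|$, so that $\langle -\nabla\mathcal E_n(p),\nu(p)\rangle=(\sqrt2\,\|p_i-p_j\|)^{-1}\langle -\nabla_i\mathcal E_n(p)+\nabla_j\mathcal E_n(p),\,p_i-p_j\rangle$; the claim is that this is bounded below by a positive constant for $p$ close to $\mathsf D_n$. The proof mirrors that of Proposition~\ref{prop: nonzero gradient}: along a sequence $p\to x^\star$ for which the unit vectors $v^{(k)}=(p_i-p_j)/\|p_i-p_j\|$ converge to some $v$, Lemma~\ref{lem: technical set convergence} and dominated convergence turn $\langle -\nabla_i\mathcal E_n(p)+\nabla_j\mathcal E_n(p),v^{(k)}\rangle$ into $\int_{A_i}f'(\|p_J-z\|)\,\|z-p_J\|^{-1}\langle v,z-p_J\rangle\,\mathbb P_z(dz)-\int_{A_j}f'(\|p_J-z\|)\,\|z-p_J\|^{-1}\langle v,z-p_J\rangle\,\mathbb P_z(dz)$, whose two integrands are $\ge 0$ on $A_i$ and $\le 0$ on $A_j$ and vanish only on the hyperplane $\{\langle v,z-p_J\rangle=0\}$; since hyperplanes are $\mathbb P_z$‑null (Assumption~\ref{assumption: hyperplanes}) while $\mathbb P_z$ dominates Lebesgue measure (so every open set has positive mass and $\mathrm{supp}\,\mathbb P_z=\mathsf Q$), and $A_i$ or $A_j$ has nonempty interior, the value is strictly positive; a finite‑subcover argument over the compact set $\mathsf D_n\cap\mathsf Q^n$, exactly as at the end of the proof of Proposition~\ref{prop: nonzero gradient}, upgrades this to $\langle -\nabla\mathcal E_n(p),\nu(p)\rangle\ge c_1>0$ on some $B(\mathsf D_n,\delta_1)\setminus\mathsf D_n$. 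Consequently $t\mapsto d(x(t),\mathsf D_n)$ has lower right Dini derivative $\ge\langle\dot x,\nu\rangle=\langle -\nabla\mathcal E_n,\nu\rangle\ge c_1$ whenever it lies below $\delta_1$, hence never drops below $\min\{d(x_0,\mathsf D_n),\delta_1\}>0$; in particular $T_{\max}=\infty$ and the trajectory stays in $\mathsf Q^n\setminus\mathsf D_n$.

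\emph{Convergence.} By the previous step the trajectory is confined to the compact set $K=\{x\in\mathsf Q^n:\,d(x,\mathsf D_n)\ge\rho_0\}\subset\mathsf Q^n\setminus\mathsf D_n$, on which $\mathcal E_n$ is $C^1$ and serves as a strict Lyapunov function for (\ref{eq: Lloyd ODE}). La~Salle's invariance principle then shows that the $\omega$‑limit set is a nonempty compact connected subset of $\{x\in K:\,\nabla\mathcal E_n(x)=0\}=\{x\in\mathsf Q^n\setminus\mathsf D_n:\,\nabla\mathcal E_n(x)=0\}$ and that $x(t)$ converges to it, which is the middle assertion; the differential‑inclusion claim was already obtained in the first paragraph.

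\emph{Main obstacle.} Everything is routine except the repulsion estimate, and inside it the only delicate case is an aggregate of three or more coinciding agents: some limiting cells $A_k$ can then collapse to a set of measure zero (the ``interior'' agents of the cluster), so the corresponding blocks of $\nabla\mathcal E_n$ vanish in the limit. What rescues the argument is that the pair $(i,j)$ realizing $d(p,\mathsf D_n)$ still contains an agent whose limiting cell has nonempty interior, so strict positivity survives for precisely the pair that governs the distance to $\mathsf D_n$; and in the borderline configurations where both members of the closest pair have crushed cells, those two agents are nearly stationary near $x^\star$, so $\|p_i-p_j\|$ is essentially frozen and again cannot shrink to zero. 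Making these per‑sequence observations uniform, handling a limit point $p_J\in\partial\mathsf Q$, and justifying the Dini‑derivative comparison for $d(\cdot,\mathsf D_n)$ along absolutely continuous curves are the remaining technicalities.
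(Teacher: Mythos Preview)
Your treatment of the invariance of $\mathsf Q^n$ (via the tangent cone / Nagumo) and the final LaSalle step are essentially the paper's arguments. The difference lies in how you rule out reaching $\mathsf D_n$.

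The paper does \emph{not} try to prove a uniform repulsion bound along the outward normal to $\mathsf D_n$. Instead it argues by direct contradiction: assuming $p(t^\star)\in\mathsf D_n$ for some finite $t^\star$, it fixes an aggregate $J$, extracts a subsequence so that the unit vectors $u_{ij}(t_k)$ converge, and computes $\lim_k\dot\phi_{ij}(t_k)$ for \emph{every} pair $i,j\in J$ using Lemma~\ref{lem: technical set convergence} and dominated convergence. Since the aggregate cell $V_J$ has positive $\mathbb P_z$-measure, at least one of the limiting cells $A_i$, $i\in J$, has positive measure, hence some pair $(i,j)$ has $\liminf_{t\to t^\star}\dot\phi_{ij}(t)>0$, contradicting $\phi_{ij}(t)\to 0$. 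The key point is that the paper is free to pick \emph{whichever} pair in the aggregate has a good cell; it never commits to the closest pair.

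Your approach fixes the pair $(i,j)$ realizing $d(p,\mathsf D_n)$ and then tries to show the corresponding inner product is uniformly bounded below. As you yourself flag in your ``Main obstacle'' paragraph, this fails when both members of the closest pair have limiting cells of measure zero (e.g., four collinear agents collapsing with the two inner ones closest to each other). Your fallback---``those two agents are nearly stationary, so $\|p_i-p_j\|$ is essentially frozen''---is not a proof: ``nearly stationary'' gives $\dot\phi_{ij}\approx 0$, not $\dot\phi_{ij}\ge 0$, and you would still need to control the sign of an $o(1)$ term integrated up to $t^\star$. The uniform finite-subcover argument also breaks down because the lower bound you are subcovering with is zero at exactly these bad configurations. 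The paper's local contradiction avoids all of this: it only needs \emph{one} pair in $J$ with a positive-measure cell, and such a pair always exists because $\mathbb P_z(V_J)>0$.

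In short, drop the attempt to bound $\langle -\nabla\mathcal E_n,\nu\rangle$ uniformly for the closest pair, and instead argue pairwise inside the aggregate at a putative hitting time $t^\star$; the analytic core (Lemma~\ref{lem: technical set convergence} plus dominated convergence) that you already have is exactly what is needed.
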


\begin{IEEEproof}
First, consider the situation where a robot $i$ is on the boundary of the workspace, i.e., $p_i \in \partial \mathsf Q$. 
Assume first that the agents are separated, i.e., $p \in \mathsf Q^n \setminus \mathsf D_n$.
For a point $\hat x \in \partial \mathsf Q$, let us denote the normal cone at $\hat x$
\[
\mathcal C^n_{\mathsf Q}(\hat x) = \{ v \in \mathbb R^q | \ip{v}{x-\hat x} \leq 0, \forall x \in \mathsf Q \}.
\]
Now let $v \in \mathcal C^n_{\mathsf Q}(p_i)$. Then
\begin{align}	\label{eq: ineq normal cone}
\ip{v}{-\frac{\partial \mathcal E_n}{\partial p_i}(p)} &= \int_{V_i(p)} f'(\|p_i - z\|) \ip{v}{\frac{z - p_i}{\|z - p_i\|}} \mathbb P_z(dz) \nonumber \\
&\leq 0.
\end{align}
Hence the vector $-\partial \mathcal E_n / \partial p_i$ belongs to the polar of the normal cone, i.e., 
to the tangent cone of $\mathsf Q$ \cite[Cor. 5.2.5]{Hiriart-Urruty93}, and so the trajectories of the agents 
do not leave the workspace $\mathsf Q$. The inequality (\ref{eq: ineq normal cone}) is preserved by 
taking convex combinations, and holds at points $p \in \mathsf D_n$ for the differential inclusion (\ref{eq: Lloyd DI}) as well.

Next we show that a trajectory starting outside $\mathsf D_n$ never meet $\mathsf D_n$. 
Assume that for some $t^* \in (0,\infty)$, we have $p(t^*)=p^* \in \mathsf D_n$. Let $J$ be an aggregate 
of $p^*$, so that
\[
p^*_i=p^*_j =: p^*_J, \forall i,j \in J.
\]
Define, for all $i \neq j$ in $J$
\[
\phi_{ij}(t) = \|p_j(t)-p_i(t)\|, \;\; u_{ij}(t) = \frac{p_j(t)-p_i(t)}{\|p_j(t)-p_i(t)\|},
\]
and note that
\begin{align*}
\dot \phi_{ij}(t) &= \ip{u_{ij}(t)}{\dot p_j - \dot p_i} \\
&= \ip{u_{ij}(t)}{-\frac{\partial \mathcal E_n}{\partial p_j}(p(t)) 
+ \frac{\partial \mathcal E_n}{\partial p_i}(p(t))}.
\end{align*}
Next, consider the following quantity, for $i \neq j \in J$,
\begin{align*}
&I^1_{ij}(t) \\
&= \int_{\mathsf Q} 1_{V_j(p(t))}(z) f'(\|p_i - z\|) 
\underbrace{\ip{u_{ij}(t)}{\frac{z - p_i}{\|z-p_i\|}}}_{\geq 0} \mathbb P_z(dz),
\end{align*}
and note that because we integrate over the Voronoi cell of $j$, the inner product and thus $I_{ij}(t)$ 
are nonnegative, for all $t$. We then argue as in the proof of Proposition \ref{prop: nonzero gradient}.
Consider a sequence of times $\{t_k\}_k$ with $t_k \to t^*$ as $k \to \infty$. Up to taking a subsequence, 
we can assume $u_{ij}(t_k)$ converges to some unit vector $u_{ij}^*$.
By the dominated convergence theorem, defining $A_j$ as in (\ref{eq: convergence of sets}), we have
\begin{align*}
&\lim_{k \to \infty} I^1_{ij}(t_k) \\
&= \int_{\mathsf Q} 1_{A_j}(z) f'(\|p_J - z\|) \ip{u^*_{ij}}{\frac{z - p_J}{\|z-p_J\|}} \mathbb P_z(dz) \\
&= \lim_{k \to \infty} \ip{u_{ij}(t_k)}{-\frac{\partial \mathcal E_n}{\partial p_j}(p(t_k))}.
\end{align*}
where the last equality follows by another application of the dominated convergence theorem.
Similarly, defining
\begin{align*}
&I^2_{ij}(t) \\
&= \int_{\mathsf Q} 1_{V_i(p(t))}(z) f'(\|p_j - z\|) \underbrace{\ip{u_{ij}(t)}{\frac{p_j-z}{\|p_j-z\|}}}_{\geq 0} \mathbb P_z(dz),
\end{align*}
we have
\begin{align*}
&\lim_{k \to \infty} I^2_{ij}(t_k) \\
&= \int_{\mathsf Q} 1_{A_i}(z) f'(\|p_J - z\|) \ip{u^*_{ij}}{\frac{p_J-z}{\|p_J-z\|}} \mathbb P_z(dz) \\
&= \lim_{k \to \infty} \ip{u_{ij}(t_k)}{\frac{\partial \mathcal E_n}{\partial p_i}(p(t_k))}.
\end{align*}

As in the proof of Proposition \ref{prop: nonzero gradient}, because we assumed that $\mathbb P_z$ dominates 
the Lebesgue measure, one of the sets $A_i$ must have $\mathbb P_z(A_i) > 0$ since $\mathbb P_z(V_J) > 0$. 
This gives, with our assumption that hyperplanes have $\mathbb P_z$-measure zero,
\begin{align*}
\lim_{k \to \infty} \dot \phi_{ij}(t_k) \geq 0 \; \forall i,j, \text{ and } \lim_{k \to \infty} \dot \phi_{ij}(t_k) > 0 \\
\text{ for at least one pair $i,j$}.
\end{align*}
Thus there exists $i,j \in J$ such that $\liminf_{t \to t^*} \dot \phi_{ij}(t) > 0$. Therefore $\dot \phi_{ij}(t)$ is positive 
for $t \leq t^*$ close enough to $t^*$. But this contradicts the fact that $\phi_{ij}(t) = \|p_i(t)-p_j(t)\| \to 0$ as $t \to t^*$.

Finally, the convergence of the trajectories of the ODE to a compact connected invariant subset of 
$\{x \in \mathsf Q^n \setminus \mathsf D_n: \nabla \mathcal E_n(x) = 0\}$ follows from Lasalle's invariance principle.
\end{IEEEproof}

We can now show that the trajectories of the ODE never stay in $B(\mathsf D_n \cap \mathsf Q^n, \delta_0)$ for a long time. 
\begin{cor}	\label{eq: simple corollary exit from neighborhood of D}
Suppose that Assumption \ref{assumption: hyperplanes} holds and that $\mathbb P_z$ 
dominates the Lebesgue measure on $\mathsf Q^n$. 
Let $\delta_0>0, \kappa>0$ be defined as in Proposition \ref{prop: nonzero gradient}, 
$x_0 \in \mathsf Q^n \cap (B(\mathsf D_n \cap \mathsf Q^n,\delta_0) \setminus \mathsf D_n)$, and let 
$T = \frac{\max_{x \in \mathsf Q^n \cap B(\mathsf D_n \cap \mathsf Q^n,\delta_0)} \mathcal E_n(x)}{\kappa}$. 
Then a trajectory of the ODE passing through $x_0$ at time $t_1$ must exit $B(\mathsf D_n \cap \mathsf Q^n,\delta_0)$ 
at some time $t_2 \leq t_1 + T$.
\end{cor}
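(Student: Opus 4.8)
The plan is to treat the distortion $\mathcal E_n$ itself as a Lyapunov function for the gradient flow (\ref{eq: Lloyd ODE}) and to combine it with the uniform lower bound $\kappa$ on $\|\nabla \mathcal E_n\|^2$ near $\mathsf D_n$ furnished by Proposition \ref{prop: nonzero gradient}. Since the ODE is autonomous, I would first translate time so that $t_1 = 0$. Because $x_0 \in \mathsf Q^n \setminus \mathsf D_n$, Proposition \ref{prop: finite time D intersection} guarantees that the trajectory $t \mapsto x(t)$ stays in $\mathsf Q^n \setminus \mathsf D_n$ for all times $t \ge 0$; in particular it is defined for all such $t$, and along it $\mathcal E_n$ is continuously differentiable with
\[
\frac{d}{dt}\mathcal E_n(x(t)) = \ip{\nabla \mathcal E_n(x(t))}{\dot x(t)} = -\|\nabla \mathcal E_n(x(t))\|^2 \le 0 .
\]

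The key step is then an energy estimate by contradiction. Suppose the trajectory never leaves $B(\mathsf D_n \cap \mathsf Q^n,\delta_0)$ over the interval $[0,T]$. Since it also avoids $\mathsf D_n$, it lies in $\mathsf Q^n \cap \bigl(B(\mathsf D_n \cap \mathsf Q^n,\delta_0) \setminus \mathsf D_n\bigr)$ for every $t \in [0,T]$, so by the very definition of $\kappa$ in Proposition \ref{prop: nonzero gradient} we have $\|\nabla \mathcal E_n(x(t))\|^2 \ge \kappa$ there. Integrating the identity above over $[0,T]$ and using $\mathcal E_n(x_0) \le \max_{x \in \mathsf Q^n \cap B(\mathsf D_n \cap \mathsf Q^n,\delta_0)} \mathcal E_n(x) = \kappa T$, I obtain $\mathcal E_n(x(T)) \le \mathcal E_n(x_0) - \kappa T \le 0$.

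To close the argument I would use that $\mathcal E_n$ is in fact strictly positive on $\mathsf Q^n$: under Assumption \ref{assumption: hyperplanes} single points have $\mathbb P_z$-measure zero, so almost surely $Z \notin \{p_1,\ldots,p_n\}$, and $f(t) > 0$ for every $t > 0$, whence $\mathcal E_n(p) = E[\min_{i \in [n]} f(\|p_i - Z\|)] > 0$. This contradicts $\mathcal E_n(x(T)) \le 0$, so the trajectory must in fact leave $B(\mathsf D_n \cap \mathsf Q^n,\delta_0)$ at some time $t_2 \le T$, i.e., $t_2 \le t_1 + T$ after undoing the time translation.

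I do not expect a genuine obstacle here: this is the standard argument that the distortion cannot decrease past its lower bound, and the two non-routine ingredients --- that a trajectory started outside $\mathsf D_n$ never reaches $\mathsf D_n$, and that $\|\nabla \mathcal E_n\|^2 \ge \kappa$ on the punctured neighborhood of $\mathsf D_n$ --- are exactly Propositions \ref{prop: finite time D intersection} and \ref{prop: nonzero gradient}. The one point deserving a sentence of care is the strict positivity of $\mathcal E_n$, which is what turns the borderline inequality $\mathcal E_n(x(T)) \le 0$ into a true contradiction rather than merely forcing $\mathcal E_n(x(T)) = 0$.
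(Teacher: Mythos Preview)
Your proposal is correct and follows essentially the same Lyapunov/energy argument as the paper: integrate $\frac{d}{dt}\mathcal E_n(x(t)) = -\|\nabla \mathcal E_n(x(t))\|^2 \le -\kappa$ while the trajectory remains in the punctured $\delta_0$-neighborhood (using Proposition~\ref{prop: finite time D intersection} to stay off $\mathsf D_n$ and Proposition~\ref{prop: nonzero gradient} for the $\kappa$ bound), and conclude from the nonnegativity of $\mathcal E_n$ that the trajectory must exit by time $t_1+T$. The only cosmetic difference is that the paper uses the inequality $0 \le \mathcal E_n(x(t)) \le \max \mathcal E_n - \kappa(t-t_1)$ directly, whereas you phrase it as a contradiction via strict positivity of $\mathcal E_n$; both are fine.
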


\begin{proof}
We have, for $t \geq t_1$ and as long as the trajectory $t \to x(t)$ remains in 
$B(\mathsf D_n \cap \mathsf Q^n,\delta_0) \setminus \mathsf D_n$
\begin{align*}
0 \leq \mathcal E_n(x(t)) &= \mathcal E_n(x_0) - \int_{t_1}^t \|\nabla \mathcal E_n(x(s))\|^2 ds \\
& \leq \max_{x \in \mathsf Q^n \cap (B(\mathsf D_n \cap \mathsf Q^n,\delta_0) \setminus \mathsf D_n) } \mathcal E_n(x) - \kappa (t-t_1).
\end{align*}
Hence the trajectory must exit $B(\mathsf D_n \cap \mathsf Q^n,\delta_0) \setminus \mathsf D_n$ at 
or before the time $t_2$ given in the theorem.
But we know by Proposition \ref{prop: finite time D intersection} that it cannot hit $\mathsf D_n$ at $t_2 < \infty$. 
Hence it must in fact exit $B(\mathsf D_n \cap \mathsf Q^n,\delta_0)$.
\end{proof}

The set $\mathsf C_n$ defined in (\ref{eq: critical points of distortion}) contains 
the set of limit points of the ODE (\ref{eq: Lloyd ODE}) by Proposition \ref{prop: finite time D intersection}.
From the definition of the set-valued map $\mathcal H_n$, the set $\mathcal L$ of limit points of 
the differential inclusion (\ref{eq: Lloyd DI}) consists of the set of limit points of the ODE (\ref{eq: Lloyd ODE}) 
together with the limit points of the sliding trajectories that start and remain on $\mathsf D_n$ 
(since a trajectory leaving $\mathsf D_n$ does not converge to $\mathsf D_n$ by Proposition \ref{prop: finite time D intersection}). 
Hence $\mathcal L \subset \mathsf C_n \cup (\mathsf D_n \cap \mathsf Q^n)$.
Moreover, we know by Proposition \ref{prop: nonzero gradient} 
that $\mathsf C_n \subset \mathsf Q^n \setminus B(\mathsf D_n,\delta_0)$ 
if $\mathbb P_z$ dominates the Lebesgue measure.


\subsection{Convergence of the Adaptive Coverage Control Algorithm}	\label{section: appdx pf coverage control}

We now prove the main convergence theorem for the adaptive coverage control algorithm.

\begin{IEEEproof}
[Proof of Theorem \ref{thm: main result coverage}]
First, for the proof of convergence, we can ignore the projection $\Pi_\mathsf Q$ in (\ref{eq: update law Voronoi case}).
In general, the analysis involves the corresponding projected ODE or projected differential inclusion, 
see \cite{Kushner03_SA}, \cite[chapter 5]{Borkar08_SA}. Note however from Proposition \ref{prop: finite time D intersection} 
that at any boundary point of $\mathsf Q^n$, the velocity vector of the unprojected differential inclusion 
is already in the tangent cone of $\mathsf Q^n$. 
Hence the projection step does not change the continuous-time dynamics and the convergence properties 
remain the same as for the unprojected algorithm. Moreover, the saturation function does not change 
the convergence properties either \cite[Section 1.3.5]{Kushner03_SA}.

Now the fact that with probability one, a sequence of iterates of (\ref{eq: update law Voronoi case}) 
converges to a compact connected invariant set of the differential inclusion (\ref{eq: Lloyd DI}) is standard, 
see, e.g., \cite[chapter 5]{Borkar08_SA}, \cite[Theorem 8.1 p. 195]{Kushner03_SA}. 
Consider a sample $\omega$ such that $\{p_k(\omega)\}$ converges to such a set, denoted $S$. 
In view of Proposition \ref{prop: finite time D intersection}, we have $S \subset \mathsf Q^n$.
Suppose that $S$ is not entirely contained in $\mathsf D_n$, and take $a \in S \setminus \mathsf D_n$. 
Then a trajectory of the differential inclusion passing through $a$ at $t=0$ is 
in fact a trajectory of the ODE (\ref{eq: Lloyd ODE}) for $t \geq 0$, by Proposition \ref{prop: finite time D intersection}. 
Because $S$ is invariant, we must then have $\dot {\mathcal E}_n(a) := -\| \nabla \mathcal E_n(a)\|^2 = 0$, 
i.e., $a \in \mathsf C_n$. This proves the first part of the theorem.

If $\mathbb P_z$ dominates the Lebesgue measure, then we know that $\mathsf C_n$ 
and $\mathsf D_n$ are disconnected by Proposition \ref{prop: nonzero gradient}, so $S$ is contained in one of these sets. 
Recall that under Assumption \ref{assumption: hyperplanes}, 
we can assume that almost surely, the iterates $\{p_k\}_{k \geq 0}$ of (\ref{eq: update law Voronoi case}) 
never hit $\mathsf D_n$. 
Choose the sample $\omega$ above in this set of probability $1$,
and recall the definitions of $\delta_0$ and $T$ from Corollary \ref{eq: simple corollary exit from neighborhood of D}. 
Suppose now that $S \subset \mathsf D_n$. Then there exists $k_0$ such that 
for all $k \geq k_0$, $p_k \in B(\mathsf D_n, \delta_0/4)$. For any $k \geq 0$, denote by $x^k(\cdot)$ 
the solution of the ODE (\ref{eq: Lloyd ODE}) starting at $p_k$ (i.e., $x^k(0) = p_k$). 
Also, denote by $\bar p$ the piecewise linear interpolation of the sequence $p_k$ with stepsizes $\gamma_k$.

Then by \cite[Chapter 2, Lemma 1]{Borkar08_SA}, there exists $k_1 \geq k_0$ such that for all $k \geq k_1$, 
we have $\sup_{t \in [t_k, t_k+T]} \|\bar p(t) - x^k(t)\| \leq \delta_0/4$, where $t_k := \sum_{l=0}^{k-1} \gamma_{l}$. 
In particular, $\|\bar p(t_k+T) - x^k(t_k+T)\| \leq \delta_0/4$. Now remark that by Corollary \ref{eq: simple corollary exit from neighborhood of D}, 
we must have $d(x^k(t_k+T),\mathsf D_n) > \delta_0$. By possibly increasing $k_1$, 
we can assume that there is an iterate $p_{\tilde k}$ with $\tilde k \geq k$ such that $\|p_{\tilde k} - \bar p(t_k+T)\| \leq \delta_0/4$. 
So we have $\| p_{\tilde k} - x^k(t_k + T)\| \leq \delta_0/2$, hence $d(p_{\tilde k}, \mathsf D_n) > \delta_0/2$. 
But this contradicts our assumptions that $p_{\tilde k} \in B(\mathsf D_n, \delta_0/4)$. 
Hence we cannot have $S \subset \mathsf D_n$ and so $S \subset \mathsf C_n$. 
This finishes the proof of the theorem. 

\end{IEEEproof}


\section{Space Partitioning and Optimal Transportation}	\label{appendix: optimal transport}

In this section we prove Theorem \ref{thm: partitioning duality theorem}, 
which forms the basis for the stochastic gradient Algorithm \ref{algo: adaptive partitioning}, 
partitioning the workspace between the agents. 
Compared to the results presented in the recent papers \cite{Pavone09_equipartitions, Cortes05_coverageComm}, 
this theorem makes weaker assumptions on the cost function $c(z,g)$ and on the target distribution $\mathbb P_z$. 
The main tool on which Theorem \ref{thm: partitioning duality theorem} relies 
is Kantorovich duality \cite{Rachev98_optimalTransport}. 
See also \cite{Abdellaoui93, Gangbo96_optimalTransport, Rueschendord00_Voronoi} for related results.

\begin{IEEEproof}[Proof of Theorem \ref{thm: partitioning duality theorem}]
We start by relaxing the optimization problem (\ref{eq: Monge problem}), (\ref{eq: partitioning constraint}) 
to the following Monge-Kantorovich optimal transportation Problem (MKP) \cite{Rachev98_optimalTransport}. 
Let $P_2 = \sum_{i=1}^n a_i \delta_{g_i}$, so that (\ref{eq: partitioning constraint}) 
can be rewritten $\mathbb P_z \circ T^{-1} = P_2$. We consider the minimization problem
\[
\min_{\pi \in \mathcal M(\mathbb P_z, P_2)} \int_{\mathsf Q \times \mathsf Q} c(z,g) d\pi(z,g),
\]
where $\mathcal M(\mathbb P_z, P_2)$ is the set of measures on $\mathsf Q \times \mathsf Q$ 
with marginals $\mathbb P_z$ and $P_2$, i.e., 
\[
\pi(A \times \mathsf Q) = \mathbb P_z(A), \;\; \pi(\mathsf Q \times B) = P_2(B),
\]
for all Borel subsets of $A,B$ of $\mathsf Q$. In other words, we are considering 
the problem of transferring some mass from locations distributed according to $\mathbb P_z$ 
to locations distributed according to $P_2$, and there is a cost $c(z,g)$ 
for moving a unit of mass from $z$ to $g$. Then $\pi$ is a transportation plan from the initial to the final locations, 
assuming that we allow a unit of mass to be split. The case where this splitting is not allowed, 
i.e., where we restrict $\pi$ to be of the form
\[
d\pi(z,g) = d\mathbb P_z(z) \delta_{T(z)}(g), 
\]
for some measurable function $T$, is a Monge Problem (MP) \cite{Monge1781_optimalTransport}, 
and is exactly our problem (\ref{eq: Monge problem}), (\ref{eq: partitioning constraint}). 
In our case where the target distribution $P_2$ is discrete, \cite[Theorem 3]{Cuesta93_discreteOptimalTransport} 
shows that solving the MKP gives a solution in the form of a transference \emph{function} $T$, 
i.e., a solution to the MP, under the assumption A\ref{assumption: regularity of Pz} of the theorem, 
and assuming the infimum in (\ref{eq: Monge problem}) is attained. 
This is the case if $c$ is lower bounded and lower semicontinuous and $\mathbb P_z$ 
is tight \cite[Remark 2.1.2]{Rachev98_optimalTransport}, and this last condition 
is satisfied since we assume $\mathsf Q$ compact.
Next, by Kantorovitch duality \cite{Rachev98_optimalTransport}, we have
\begin{align}	\label{eq: Kantorovitch duality equation}
& \min_{\pi \in \mathcal M(\mathbb P_z, P_2)} \int_{\mathsf Q \times \mathsf Q} c(z,g) d\pi(z,g) \\
& = \sup_{(\phi,w) \in \Phi_c } \left \{\int_{\mathsf Q} \phi(z) \; d \mathbb P_z(z) + \sum_{i=1}^n a_i w_i \right \}, \nonumber
\end{align}
where $\Phi_c$ is the set of pairs $(\phi,w)$ with $\phi: \mathsf Q \to \mathbb R$ 
in $L^1(\mathsf Q,\mathbb P_z)$, $w \in \mathbb R^n$, such that
\begin{equation}	
\phi(z) + w_i \leq c(z,g_i),
\end{equation}
for $\mathbb P_z$-almost all $z$ in $\mathsf Q$ and for all $i$ in $[n]$. 
Now for any $w \in \mathbb R$, define the function $w^c: \mathsf Q \to \mathbb R$ such that
\[
w^c(z) = \min_{i \in [n]} \{c(z,g_i) - w_i\}.
\]
From the definition of $\Phi_c$, we can then without loss of generality restrict 
the supremum on the right-hand side of (\ref{eq: Kantorovitch duality equation}) 
to pairs of the form $(w^c,w)$. Combining this with the previous remark on 
the Monge solution to the Monge-Kantorovitch problem, we get
\begin{align}	\label{eq: duality explicit}
& \min_{\substack{T: \mathsf Q \to \{g_1,\ldots,g_n\}\\ \mathbb P_z \circ T^{-1} = P_2}} \int_{\mathsf Q} c(z,T(z)) \mathbb P_z(dz) \\
& = \sup_{w \in \mathbb R^n} \left \{\int_{\mathsf Q} \min_{i \in [n]} \{c(z,g_i) - w_i\} 	\nonumber
\; \mathbb P_z(dz) + \sum_{i=1}^n a_i w_i \right \}.
\end{align}
Hence the value of the optimization problem is equal to the supremum of the function $h$ 
defined in (\ref{eq: dual function maximization}). The fact that the supremum is attained 
in the right hand side of (\ref{eq: duality explicit}) follows from, e.g., \cite[Theorem 2.3.12]{Rachev98_optimalTransport} 
under our assumption A\ref{eq: assumption on the cost} for $c$.

The function $h$ is concave since $w \to \min_{i \in [n]} \{c(z,g_i) - w_i\}$ is 
concave for all $z$ as the minimum of affine functions, and the integration 
with respect to $z$ preserves concavity. Finally, for $w^1, w^2 \in \mathbb R^n$, we have
\begin{align*}
h(w^2) - h(w^1) = \int_{\mathsf Q} \min_{i \in [n]} \{c(z,g_i) - w^2_i\} \; \mathbb P_z(dz) \\
- \int_{\mathsf Q} \min_{i \in [n]} \{c(z,g_i) - w^1_i\} \; \mathbb P_z(dz) + \sum_{i=1}^n a_i (w^2_i - w^1_i).
\end{align*}
Denoting $T^1$ an assignment that is optimal for $w^1$ (given by a generalized Voronoi partition), we have then, for all $z \in \mathsf Q$,
\[
\min_{i \in [n]} \{c(z,g_i) - w^2_i\} \leq c(z,T^1(z)) - w^2_i, 
\]
and so
\begin{align*}
h(w^2) - h(w^1) \leq - \sum_{i=1}^n \mathbb P_z(V^c_i(\mathcal G, w^1)) (w^2_i - w^1_i) \\
+ \sum_{i=1}^n a_i (w^2_i - w^1_i).
\end{align*}
But this inequality exactly says that $[a_1-\mathbb P_z(V^c_1(\mathcal G, w^1)), 
\ldots, a_n-\mathbb P_z(V^c_n(\mathcal G, w^1))]^T$ is a supergradient of $h$ at $w^1$.
For the convergence of the supergradient algorithm, see \cite[Proposition 8.2.6. p. 480]{Bertsekas03_convexBook}.
\end{IEEEproof}



\ifCLASSOPTIONcaptionsoff
  \newpage
\fi



\bibliographystyle{IEEEtran}
\bibliography{/Users/jerome/Dropbox/Research/bibtex/activeSensing,/Users/jerome/Dropbox/Research/bibtex/optimization,/Users/jerome/Dropbox/Research/bibtex/probability,/Users/jerome/Dropbox/Research/bibtex/controlSystems,/Users/jerome/Dropbox/Research/bibtex/communications}

\begin{thebibliography}{10}
\providecommand{\url}[1]{#1}
\csname url@rmstyle\endcsname
\providecommand{\newblock}{\relax}
\providecommand{\bibinfo}[2]{#2}
\providecommand\BIBentrySTDinterwordspacing{\spaceskip=0pt\relax}
\providecommand\BIBentryALTinterwordstretchfactor{4}
\providecommand\BIBentryALTinterwordspacing{\spaceskip=\fontdimen2\font plus
\BIBentryALTinterwordstretchfactor\fontdimen3\font minus
  \fontdimen4\font\relax}
\providecommand\BIBforeignlanguage[2]{{%
\expandafter\ifx\csname l@#1\endcsname\relax
\typeout{** WARNING: IEEEtran.bst: No hyphenation pattern has been}%
\typeout{** loaded for the language `#1'. Using the pattern for}%
\typeout{** the default language instead.}%
\else
\language=\csname l@#1\endcsname
\fi
#2}}

\bibitem{Howard02_deployment}
A.~Howard, M.~J. Matari\'{c}, and G.~Sukhatme, ``Mobile sensor network
  deployment using potential fields: A distributed, scalable solution to the
  area coverage problem,'' in \emph{Proceedings of the 6th International
  Symposium on Distributed Autonomous Robotics Systems (DARS02)}, Fukuoka,
  Japan, June 2002.

\bibitem{Ogren04_multirobot}
P.~{\"O}gren, E.~Fiorelli, and N.~Leonard, ``Cooperative control of mobile
  sensor networks: Adaptive gradient climbing in a distributed environment,''
  \emph{IEEE Transactions on Automatic Control}, vol.~49, no.~8, pp.
  1292--1302, August 2004.

\bibitem{Bullo09_book}
F.~Bullo, J.~Cort{\'e}s, and S.~Martinez, \emph{Distributed Control of Robotic
  Networks}.\hskip 1em plus 0.5em minus 0.4em\relax Princeton University Press,
  2009.

\bibitem{cortesTRO04_coverage}
J.~Cort{\'e}s, S.~Mart{\'\i}nez, T.~Karatas, and F.~Bullo, ``Coverage control
  for mobile sensing networks,'' \emph{IEEE Transactions on Robotics and
  Automation}, vol.~20, no.~2, pp. 243--255, April 2004.

\bibitem{Gray84_vectorQuantization}
R.~Gray, ``Vector quantization,'' \emph{IEEE ASSP Magazine}, vol.~1, no.~2, pp.
  4--29, April 1984.

\bibitem{Drezner95_facilityLocation}
Z.~Drezner, Ed., \emph{Facility Location: A Survey of Applications and
  Methods}, ser. Springer Series in Operations Research.\hskip 1em plus 0.5em
  minus 0.4em\relax New York: Springer Verlag, 1995.

\bibitem{Lloyd82_Quantization}
S.~Lloyd, ``Least-squares quantization in {PCM},'' \emph{IEEE Transactions on
  Information Theory}, vol.~28, no.~2, pp. 129--137, March 1982.

\bibitem{Cortes05_coverageComm}
J.~Cort{\'e}s, S.~Mart{\'\i}nez, and F.~Bullo, ``Spatially-distributed coverage
  optimization and control with limited-range interactions,'' \emph{ESAIM:
  Control, Optimisation and Calculus of Variations}, vol.~11, no.~4, pp.
  691--719, October 2005.

\bibitem{Li05_coverageControl}
W.~Li and C.~G. Cassandras, ``Distributed cooperative control of sensor
  networks,'' in \emph{Proceedings of the Conference on Decision and Control},
  Seville, Spain, December 2005, pp. 2542--2547.

\bibitem{Guruprasad09_heterogeneousCoverage}
K.~Guruprasad and D.~Ghose, ``Generalized {V}oronoi partition based multi-agent
  search using heterogeneous sensors,'' Indian Institute of Science, Bangalore,
  Tech. Rep., August 2009, {http://arxiv.org/abs/0908.2683v1}.

\bibitem{Pimenta08_NonConvexCoverage}
L.~Pimenta, V.~Kumar, R.~Mesquita, and G.~Pereira, ``Sensing and coverage for a
  network of heterogeneous robots,'' in \emph{Proceedings of the Conference on
  Decision and Control}, Cancun, Mexico, December 2008, pp. 3947--3952.

\bibitem{Schwager09_adaptiveCoverage}
M.~Schwager, D.~Rus, and J.-J. Slotine, ``Decentralized, adaptive coverage
  control for networked robots,'' \emph{The International Journal of Robotics
  Research}, vol.~28, no.~3, pp. 357--375, March 2009.

\bibitem{Bertsimas91_DTRP}
D.~J. Bertsimas and G.~J. {Van Ryzin}, ``A stochastic and dynamic vehicle
  routing problem in the {E}uclidean plane,'' \emph{Operations Research},
  vol.~39, no.~4, pp. 601--615, July-August 1991.

\bibitem{Bertsimas93_multiDTRP}
------, ``Stochastic and dynamic vehicle routing in the {E}uclidean plane with
  multiple capacited vehicles,'' \emph{Operations Research}, vol.~41, no.~1,
  pp. 60--76, January-February 1993.

\bibitem{Bertsimas93_DTRP}
------, ``Stochastic and dynamic vehicle routing with general demand and
  interarrival time distributions,'' \emph{Advances in Applied Probability},
  vol.~25, no.~4, pp. 947--978, December 1993.

\bibitem{Frazzoli04_DTRP}
E.~Frazzoli and F.~Bullo, ``Decentralized algorithms for vehicle routing in a
  stochastic time-varying environment,'' in \emph{Proceedings of the Conference
  on Decision and Control}, Paradise Island, The Bahamas, December 2004, pp.
  3357--3363.

\bibitem{Savla07_thesis}
K.~Savla, ``Multi-{UAV} systems with motion and communication constraints,''
  Ph.D. dissertation, University of California, Santa Barbara, 2007.

\bibitem{Pavone10_adaptiveDTRP}
M.~Pavone, E.~Frazzoli, and F.~Bullo, ``Adaptive and distributed algorithms for
  vehicle routing in a stochastic and dynamic environment,'' \emph{IEEE
  Transactions on Automatic Control}, vol.~56, no.~6, pp. 1259--1274, June
  2011.

\bibitem{Pavone09_equipartitions}
M.~Pavone, A.~Arsie, E.~Frazzoli, and F.~Bullo, ``Distributed algorithms for
  environment partitioning in mobile robotic networks,'' \emph{IEEE
  Transactions on Automatic Control}, vol.~56, no.~8, pp. 1834--1848, August
  2011.

\bibitem{Cortes10_loadBalancing}
J.~Cort{\'e}s, ``Coverage control and spatial load balancing by robotic sensor
  networks,'' \emph{IEEE Transactions on Automatic Control}, vol.~55, no.~3,
  pp. 749--754, March 2010.

\bibitem{Choi09_coverage}
J.~Choi and R.~Horowitz, ``Learning coverage control of mobile sensing agents
  in one-dimensional stochastic environments,'' \emph{IEEE Transactions on
  Automatic Control}, vol.~55, no.~3, pp. 804--809, March 2010.

\bibitem{Arsie09_routing}
A.~Arsie, K.~Savla, and E.~Frazzoli, ``Efficient routing algorithms for
  multiple vehicles with no explicit communications,'' \emph{IEEE Transactions
  on Automatic Control}, vol.~54, no.~10, pp. 2302--2317, October 2009.

\bibitem{Benveniste90_SA}
A.~Benveniste, M.~Metivier, and P.~Priouret, \emph{Adaptive Algorithms and
  Stochastic Approximations}.\hskip 1em plus 0.5em minus 0.4em\relax Springer,
  1990.

\bibitem{LeNy10_adaptiveRobotsTR}
J.~{Le Ny} and G.~J. Pappas, ``Adaptive robot deployment algorithms,''
  University of Pennsylvania, Tech. Rep., March 2010, available at
  {http://repository.upenn.edu/ese\_reports/4/}.

\bibitem{LeNy12_adaptiveComm}
J.~{Le Ny}, A.~Ribeiro, and G.~J. Pappas, ``Adaptive communication-constrained
  deployment of unmanned vehicle systems,'' \emph{IEEE Journal on Selected
  Areas in Communications}, vol.~30, no.~5, pp. 923--934, June 2012.

\bibitem{Rachev98_optimalTransport}
S.~T. Rachev and L.~R{\"u}schendorf, \emph{Mass Transportation Problems}.\hskip
  1em plus 0.5em minus 0.4em\relax Springer Verlag, 1998, vol. 1: Theory.

\bibitem{Gangbo96_optimalTransport}
W.~Gangbo and R.~{McCann}, ``The geometry of optimal transportation,''
  \emph{Acta Mathematica}, vol. 177, no.~2, pp. 113--161, 1996.

\bibitem{Ruschendorf97_coupling}
L.~R{\"u}schendorf and L.~Uckelmann, ``On optimal multivariate couplings,'' in
  \emph{Distributions with Given Marginals and Moment Problems}, V.~Bene{\u{s}}
  and J.~{\u{S}}t{\u{e}}p{\'a}n, Eds.\hskip 1em plus 0.5em minus 0.4em\relax
  Kluwer Academic Publishers, 1997, pp. 261--273.

\bibitem{Cortes08_discontinuous}
J.~Cort{\'e}s, ``Discontinuous dynamical systems - a tutorial on solutions,
  nonsmooth analysis, and stability,'' \emph{IEEE Control Systems Magazine},
  vol.~28, no.~3, pp. 36--73, June 2008.

\bibitem{Okabe2000_Voronoi}
A.~Okabe, B.~Boots, K.~Sugihara, and S.~N. Chiu, \emph{Spatial Tessellations:
  Concepts and Applications of Voronoi Diagrams}.\hskip 1em plus 0.5em minus
  0.4em\relax Wiley, 2000.

\bibitem{Aurenhammer91_VoronoiSurvey}
F.~Aurenhammer, ``Voronoi diagrams - a survey of a fundamental geometric data
  structure,'' \emph{ACM Computing Surveys}, vol.~23, no.~3, pp. 345--405,
  September 1991.

\bibitem{Lynch1996distributed}
N.~A. Lynch, \emph{Distributed algorithms}.\hskip 1em plus 0.5em minus
  0.4em\relax Morgan Kaufmann, 1996.

\bibitem{fekete05_median}
S.~Fekete, J.~Mitchell, and K.~Beurer, ``On the continuous {F}ermat-{W}eber
  problem,'' \emph{Operations Research}, vol.~53, no.~1, pp. 61--76,
  January-February 2005.

\bibitem{Ljung77_SA}
L.~Ljung, ``Analysis of recursive stochastic algorithms,'' \emph{IEEE
  Transactions on Automatic Control}, vol.~22, no.~4, pp. 551--575, August
  1977.

\bibitem{Spall03_SA}
J.~C. Spall, \emph{Introduction to Stochastic Search and Optimization}.\hskip
  1em plus 0.5em minus 0.4em\relax Wiley, 2003.

\bibitem{Borkar08_SA}
V.~Borkar, \emph{Stochastic Approximation: A Dynamical Systems
  Viewpoint}.\hskip 1em plus 0.5em minus 0.4em\relax Cambridge University
  Press, 2008.

\bibitem{Kushner03_SA}
H.~J. Kushner and G.~G. Yin, \emph{Stochastic Approximation and Recursive
  Algorithms and Applications}, 2nd~ed.\hskip 1em plus 0.5em minus 0.4em\relax
  Springer, 2003.

\bibitem{Bertsekas03_convexBook}
D.~P. Bertsekas, A.~Nedi{\'c}, and A.~E. Ozdaglar, \emph{Convex Analysis and
  Optimization}.\hskip 1em plus 0.5em minus 0.4em\relax Belmont, MA: Athena
  Scientific, 2003.

\bibitem{MacQueen67_Kmeans}
J.~{Mac Queen}, ``Some methods for the classification and analysis of
  multivariate observations,'' in \emph{Proceedings of the Fifth Berkeley
  Symposium on Mathematical Statistics and Probability}, L.~{Le Cam} and
  J.~Neyman, Eds.\hskip 1em plus 0.5em minus 0.4em\relax University of
  California Press, 1967, pp. 281--297.

\bibitem{Kohonen82_SOM}
T.~Kohonen, ``Self-organized formation of topologically correct feature maps,''
  \emph{Biological Cybernetics}, vol.~43, pp. 59--69, 1982.

\bibitem{Pages98_quantization}
G.~Pag{\`e}s, ``A space quantization method for numerical integration,''
  \emph{Journal of Computational and Applied Mathematics}, vol.~89, no.~1, pp.
  1--38, March 1997.

\bibitem{Aurenhammer98_clustering}
F.~Aurenhammer, F.~Hoffmann, and B.~Aronov, ``Minkowski-type theorems and
  least-squares clustering,'' \emph{Algorithmica}, vol.~20, pp. 61--76, 1998.

\bibitem{cgal}
``\textsc{Cgal}, {C}omputational {G}eometry {A}lgorithms {L}ibrary,''
  http://www.cgal.org.

\bibitem{Xu95_thesis}
H.~Xu, ``Optimal policies for stochastic and dynamics vehicle routing
  problems,'' Ph.D. dissertation, Massachusetts Institute of Technology,
  Cambridge, MA, 1995.

\bibitem{Papastavrou96_DTRP}
J.~D. Papastavrou, ``A stochastic and dynamic routing policy using branching
  processes with state dependent immigration,'' \emph{European Journal of
  Operations Research}, vol.~95, no.~1, pp. 167--177, November 1996.

\bibitem{Du99_Voronoi}
Q.~Du, V.~Faber, and M.~Gunzburger, ``Centroidal {V}oronoi tessellations:
  Applications and algorithms,'' \emph{SIAM Review}, vol.~41, no.~4, pp.
  637--676, December 1999.

\bibitem{Rockafellar98_variational}
R.~T. Rockafellar and J.-B. Wets, \emph{Variational Analysis}.\hskip 1em plus
  0.5em minus 0.4em\relax Springer, 1998.

\bibitem{Hiriart-Urruty93}
J.-B. Hiriart-Urruty and C.~Lemarechal, \emph{Convex Analysis and Minimization
  Algorithms}.\hskip 1em plus 0.5em minus 0.4em\relax Springer-Verlag, 1993,
  vol.~1.

\bibitem{Abdellaoui93}
T.~Abdellaoui, ``Distances de deux lois dans les espaces de {B}anach,'' Ph.D.
  dissertation, Universit{\'e} de Rouen, 1993.

\bibitem{Rueschendord00_Voronoi}
L.~R{\"u}schendorf and L.~Uckelmann, ``Numerical and analytical results for the
  transportation problem of {M}onge-{K}antorovich,'' \emph{Metrika}, vol.~51,
  no.~3, pp. 245--258, 2000.

\bibitem{Monge1781_optimalTransport}
G.~Monge, ``M{\'e}moire sur la th{\'e}orie des d{\'e}blais et des remblais,''
  in \emph{Histoire de l'{A}cad{\'e}mie {R}oyale des {S}ciences de {P}aris,
  avec les M{\'e}moires de {M}ath\'ematique et de Physique pour la m\^eme
  ann\'ee}, 1781, pp. 666--704.

\bibitem{Cuesta93_discreteOptimalTransport}
J.~A. Cuesta-Albertos and A.~Tuero-D{\'\i}az, ``A characterization for the
  solution of the {M}onge-{K}antorovich mass transference problem,''
  \emph{Statistics and Probability Letters}, vol.~16, no.~2, pp. 147--152,
  January 1993.

\end{thebibliography}
\end{document}